\newtheorem{theorem}{Theorem}
\newtheorem{lemma}[theorem]{Lemma}
\newtheorem{remark}[theorem]{Remark}
\newtheorem{corollary}[theorem]{Corollary}
\newtheorem{proposition}[theorem]{Proposition}
\numberwithin{theorem}{section} \numberwithin{equation}{section}
\newcommand{\beq}{\begin{small} \begin{equation}}
\newcommand{\eeq}{\end{equation} \end{small}}
\newcommand{\beqn}{\begin{small} \begin{equation*}}
\newcommand{\eeqn}{\end{equation*} \end{small}}
\DeclareMathAlphabet{\mathpzc}{OT1}{pzc}{m}{it}
\begin{document}
\title{On projective K3 surfaces $\mathcal{X}$ with $\mathrm{Aut}(\mathcal{X})=(\mathbb{Z}/2\mathbb{Z})^2$}
\author{Adrian Clingher}
\address{Department of Mathematics and Statistics, University of Missouri - St. Louis, St. Louis, MO 63121, United States}
\email{clinghera@umsl.edu}
\author{Andreas Malmendier}
\address{Department of Mathematics \& Statistics, Utah State University, Logan, UT 84322, United States}
\email{andreas.malmendier@usu.edu}
\author{Xavier Roulleau}
\address{Universit\'e d'Angers, CNRS, Larema, SFR MathStic, F-49000 Angers, France}
\email{xavier.roulleau@univ-angers.fr}
\begin{abstract}
We prove that every K3 surface with automorphism group $(\mathbb{Z}/2\mathbb{Z})^2$ admits an explicit birational model as a double sextic surface. This model is canonical for Picard number greater than 10. For Picard number greater than 9, the K3 surfaces in question possess a second birational model, in the form of a projective quartic hypersurface, generalizing the Inose quartic.
\end{abstract}
\keywords{K3 surfaces, double sextic surfaces}
\subjclass[2020]{14J28, 14J50}
\maketitle
\section{Introduction}
Let $\mathcal{X}$ be a smooth complex projective K3 surface. We denote the N\'eron-Severi lattice of $\mathcal{X}$ by $\mathrm{NS}(\mathcal{X})$. This lattice is even and has signature $(1, \rho_\mathcal{X}-1)$, where $\rho_\mathcal{X}$ is the Picard number of $\mathcal{X}$. The value of $\rho_\mathcal{X}$ ranges from 1 to 20. A lattice polarization on $\mathcal{X}$ is defined as a primitive lattice embedding $i\colon L \hookrightarrow \mathrm{NS}(\mathcal{X})$, where $i(L)$ contains a pseudo-ample class. Here, $L$ is an even indefinite lattice of rank $\rho_L$ and signature $(1,\rho_L-1)$, with $1 \leq \rho_L \leq 20$. Two K3 surfaces $(\mathcal{X},i)$ and $(\mathcal{X}',i')$ are said to be isomorphic under $L$-polarization if there exists an analytic isomorphism $\alpha \colon \mathcal{X} \rightarrow \mathcal{X}'$ and a lattice isometry $\beta \in O(L)$ such that $\alpha^* \circ i' = i \circ \beta$, where $\alpha^*$ is the induced morphism at the cohomology level.\footnote{Our definition of isomorphic lattice polarizations coincides with the one used by Vinberg, and is slightly more general than the one used in \cite{MR1420220}*{Sec.~1}.} $L$-polarized K3 surfaces are classified up to isomorphism by a coarse moduli space, which is a quasi-projective variety of dimension $20-\rho_L$. A \emph{general} $L$-polarized K3 surface $(\mathcal{X},i)$ satisfies $i(L)=\mathrm{NS}(\mathcal{X})$.
\par It is known that the automorphism group $\mathrm{Aut}(\mathcal{X})$ of a K3 surface $\mathcal{X}$ is isomorphic, up to a finite group, to the factor group $O(\mathrm{NS}(\mathcal{X}))/\mathcal{W}_\mathcal{X}$, where $\mathcal{W}_\mathcal{X}$ is the subgroup generated by reflections associated with elements of $\mathrm{NS}(\mathcal{X})$ of square $(-2)$. Nikulin \cites{MR633160, MR556762, MR752938} and Vinberg \cite{MR2429266} classified K3 surfaces $\mathcal{X}$ with finite automorphism group by their N\'eron-Severi lattices $\mathrm{NS}(\mathcal{X})$ for $\rho_\mathcal{X} \ge 3$, resulting in a list of 118 lattices. This paper focuses on the case $\mathrm{Aut}(\mathcal{X}) \cong (\mathbb{Z}/2\mathbb{Z})^2$. According to Kondo \cite{MR1029967}, in such a situation the lattice $\mathrm{NS}(\mathcal{X})$ is 2-elementary. In fact, all possible N\'eron-Severi lattices may be listed as follows:
\beq
\label{eqn:lattices}
\begin{split}
 H \oplus E_8  \oplus E_8, \, H \oplus E_8  \oplus E_7, \,  H \oplus E_8  \oplus D_6,  \,  H \oplus E_8  \oplus D_4 \oplus A_1, \,  H \oplus E_8  \oplus A_1^{\oplus 4}, \, H \oplus D_8  \oplus D_4,\\
 H \oplus E_7  \oplus A_1^{\oplus 4}, \, H \oplus D_6  \oplus A_1^{\oplus 4}, \, H \oplus D_4  \oplus A_1^{\oplus 5}, \, H\oplus N \cong H(2) \oplus D_4^{\oplus 2}, \, H \oplus A_1^{\oplus 8}, \, H(2) \oplus A_1^{\oplus 7}.
\end{split}
\eeq
{\small Above, we utilize the following notations: $L_1 \oplus L_2$ refers to the orthogonal direct sum of the two lattices $L_1$ and $L_2$, $L(\lambda)$ is obtained by multiplying the lattice pairing of $L$ with $\lambda \in \mathbb{Z}$, $\langle R \rangle$ denotes a lattice with matrix $R$ in some basis. The lattices $A_n$, $D_m$, and $E_k$ are the negative definite root lattices associated with the respective root systems. $H$ is the unique even unimodular hyperbolic rank-two lattice, and $N$ is the negative definite rank-eight Nikulin lattice, as defined, for instance, in \cite{MR728142}*{Sec.~5}.
\par Given a lattice $L$, we denote its discriminant group as $L^\vee/L$, and its associated discriminant form as $q_L$. A lattice $L$ is termed \emph{2-elementary} if $D(L) \cong (\mathbb{Z}/2\mathbb{Z})^\ell$, where $\ell=\ell_L$ is the \emph{length} of $L$. A result of Nikulin \cite{MR633160b}*{Thm.~4.3.2} asserts that hyperbolic, even, 2-elementary, primitive sublattices $L$ of the even, unimodular K3 lattice of signature $(3, 19)$ are uniquely determined by their rank $\rho_L$, length $\ell_L$, and a third invariant $\delta_L \in \{ 0, 1\}$, called \emph{parity}.  Here, $\delta_L = 0$ if $q_L(x)$ takes values in $\mathbb{Z}/2\mathbb{Z} \subset \mathbb{Q}/2\mathbb{Z}$ for all $x \in D(L)$, and $ \delta_L=1$ otherwise.}
\medskip
\par The present article builds upon previous work by the authors, as well as others. In \cite{Roulleau22}, the third author studied the geometry of K3 surfaces with finite automorphism group. In particular, the configuration of all rational curves was determined in each case. The first two authors studied the same families of K3 surfaces from the point of view of elliptic fibrations \cite{Clingher:2022}. Detailed classifications were given, including a description of coarse moduli spaces, as well as explicit birational models. Other examples of explicit elliptic fibrations on K3 surfaces of 2-elementary type were given by Garbagnati and Salgado in~\cites{MR4130832} and  Balestrieri et al.~\!in~\cites{MR3882710}. 
\par  In this article, we establish explicit birational models for all general $L$-polarized K3 surfaces, with $L$ a member of the list~(\ref{eqn:lattices}). Specifically, we construct normal forms, as quartic hypersurfaces in $\mathbb{P}^3$, as well as double sextic surfaces. The automorphisms in $\mathrm{Aut}(\mathcal{X})$ can then be seen as explicit algebraic transformations in the double sextic context. For  $\rho_L > 10$, the double sextic structure is canonical. The linear systems corresponding to the two projective models are also described in detail.
\par The article is structured as follows: Section~\ref{sec:statment} outlines the main results, namely Theorems~\ref{thm1_intro} and~\ref{thm2_intro}. The construction of the relevant birational models as well as the proof of the theorems are then divided up into different cases, corresponding to their Picard number. Section~\ref{sec:rank9} presents the treatment of K3 surfaces of Picard number 9. Similarly, Sections~\ref{sec:rank10} and ~\ref{sec:rank11ff} discuss K3 surfaces with Picard number 10 and with Picard number greater than 10, respectively.
\section*{Acknowledgement}
The authors would like to thank the referees for their careful reading of the article and the detailed comments and corrections.
A.M. acknowledges support from the Simons Foundation through grant no.~202367.
X.R. acknowledges support from Centre Henri Lebesgue ANR-11-LABX-0020-01.
\section{Statement of results}
\label{sec:statment}
A central role in this paper is played by the concept of a \emph{double sextic structure} on a K3 surface  \cites{MR595204, MR805337,MR2030225}. Let $S \subset \mathbb{P}^2 $ be a reduced plane curve of degree six; we say that the sextic is {\it admissible} if  the double cover $\pi_\mathcal{S}\colon \mathcal{S}  \rightarrow \mathbb{P}^2$ branched over $S$ is a surface with, at worst, rational double point singularities. The singularities of such a sextic  are called \emph{simple singularities} in \cite{MR661198} and general results of Brieskorn then guarantee that the minimal desingularization $\mathcal{X}$ of $\mathcal{S}$, is a K3 surface. The above process defines a double cover morphism $\mathcal{X} \rightarrow \mathbb{P}^2 $, which will be referred to as a \emph{double sextic structure} on $\mathcal{X}$.
\par We note that the K3 surface $\mathcal{X}$, as well as the double sextic structure $\mathcal{X} \rightarrow \mathbb{P}^2 $, may alternatively be obtained by first performing a sequence of blow-ups on $\mathbb{P}^2$, until the singularities of $S$ are resolved, followed by taking a double cover of the resulting rational surface, branched over an appropriate smooth branch locus. 
\par We also note that a double sextic structure $\mathcal{X} \rightarrow \mathbb{P}^2 $ induces a natural antisymplectic involution $k_\mathcal{X} \colon \mathcal{X} \rightarrow \mathcal{X}$, as well as a special base-point-free linear system $\vert \mathpzc{D}_2 \vert $, with $\mathpzc{D}_2$ quasi-ample, satisfying $\mathpzc{D}_2^2 = 2$ and $h^0(\mathpzc{D}_2) = 3$.  The linear system $\vert \mathpzc{D}_2 \vert $ completely determines the double sextic structure $\mathcal{X} \rightarrow \mathbb{P}^2$.  
\par In the generic situation, the sextic curve  $S$ is non-singular, and in general position, which leads to $\rho_\mathcal{X} =1$. However, the Picard number of $\mathcal{X}$ may increase, as the curve $S$ may lie in a special position or may carry singularities. Multiple such examples were constructed in \cites{MR805337}. The minimal resolutions of the double sextic surfaces we derive in this article have Picard numbers between 9 and 18.  
\par A classical geometric situation in which double sextic structures occur is the case when $\mathcal{X}$ admits a singular \emph{quartic normal form}, i.e., $\mathcal{X}$ may be obtained as the minimal resolution of a quartic surface $\mathcal{K}$ in $\mathbb{P}^3$ with rational double point singularities. Assume that $\mathrm{p} \in \mathcal{K}$ is such a rational double point. Considering the lines in $\mathbb{P}^3$ passing through $\mathrm{p}$, one obtains a projection morphism,  associated with the projection with center $\mathrm{p}$, 
\begin{equation}
\label{dsextic} 
   \mathcal{K} - \{ \mathrm{p} \} \ \rightarrow \ \mathbb{P}^2 ,
 \end{equation} 
which generically is a double cover.  Upon performing the desingularization of $\mathcal{K}$, the morphism (\ref{dsextic}) extends to a double sextic structure $\mathcal{X} \rightarrow \mathbb{P}^2$.   

\par In this article, we shall primarily focus on two types of double sextic structures: on the one hand, we construct double sextics $\mathcal{S}$ where the branch locus in $\mathbb{P}^2 = \mathbb{P}(u, v, w)$ is a bi-quadratic sextic that is generically irreducible. Such a double sextic is given by
\beq
\label{eqn:double-sextics_general_intro}
   \mathcal{S}\colon \quad  y^2 =   c_0 w^6 + c_2(u, v) w^4 + c_4(u, v) w^2  + c_6(u, v) \, ,
\eeq
where $c_n$ are homogeneous polynomials of degree $n$. In this context, the automorphism group $\mathrm{Aut}(\mathcal{X})$ of the associated K3 surface $\mathcal{X}$ contains several involutions by construction: 
\begin{enumerate*}[label=(\roman*)] 
\item An antisymplectic involution $\jmath_\mathcal{X}$ is induced by the involution $\jmath_\mathcal{S} \colon w \mapsto -w$ on the double sextic whose invariant locus contains a smooth curve of genus two which is the hyperelliptic curve $y^2 = c_6(u, v)$.
\item A commuting antisymplectic involution is the aforementioned involution $k_\mathcal{X}$ that is induced by the involution $k_\mathcal{S} \colon  y \mapsto - y$. 
\item Moreover, the composition of the two anti-symplectic involutions induces the symplectic involution $\imath_\mathcal{S}= \jmath_\mathcal{S} \circ k_\mathcal{S}\colon (w, y) \mapsto (-w, -y)$ on $\mathcal{S}$ and $\imath_\mathcal{X}= \jmath_\mathcal{X} \circ k_\mathcal{X}$ on $\mathcal{X}$, respectively.
\end{enumerate*}
\par On the other hand, our investigation will encompass the desingularization of double covers branched over a reducible sextic in the projective plane, which is expressed as the union of a conic and a plane quartic curve. Such a double sextic is given by
\beq
\label{eqn:double-sextics_prime_general_intro}
\mathcal{S}'\colon \quad \tilde{y}^2 = C(u, v, w) \cdot Q(u, v, w) \, ,
\eeq
where $C$ and $Q$ are homogeneous polynomials of degree 2 and 4, respectively. 
\par We will demonstrate that every general $L$-polarized K3 surface, where $L$ is one of the lattices in Equation~(\ref{eqn:lattices}),  admits birational models as  double sextics of the form $\mathcal{S}$ and $\mathcal{S'}$.\footnote{For ranks $\rho_L=9, 10$ there is only the birational model $\mathcal{S}$, and for $L=H \oplus N$ the sextic $\mathcal{S}$ has a different form, given by Equation~(\ref{eqn:double_sextic_rank10}).}  Our findings are as follows:
\begin{theorem}
\label{thm1_intro}
Let $\mathcal{X}$ be a K3 surface with $\mathrm{Aut}(\mathcal{X}) \cong (\mathbb{Z}/2\mathbb{Z})^2$ so that $\mathrm{NS}(\mathcal{X}) \cong L$, for $L$ in Equation~(\ref{eqn:lattices}). There exists a double sextic $\mathcal{S}$ whose minimal resolution is isomorphic to $\mathcal{X}$ and has the following properties:
\begin{enumerate}
\item For $L \not = H \oplus N$ there are polynomials $c_n$ of degree $n=0, 2, 4, 6$ so that $\mathcal{S}$ is given by~(\ref{eqn:double-sextics_general_intro}). For $L  = H \oplus N$ the double sextic is of the form
\beqn
\label{eqn:double_sextic_intro}
  \mathcal{S}\colon \quad y^2   = w \big( c_3(u, v)  \, w^2 + a_4(u, v) \, w + d_5(u, v)  \big) \, ,
\eeqn
where $c_3, a_4, d_5$ are homogeneous polynomials of degree 3, 4, 5, respectively.
\item Let $S$ be the sextic branch curve. For $L \not = H \oplus N , H \oplus D_8 \oplus D_4$ the double sextic $\mathcal{S}$ is irreducible, and $S$ has one singularity of type $\mathbf{A}_{2n+1}$ for $\rho_L =10+n$, $0 \le n \le 8$, and no singularity for $\rho_L=9$. For $L = H \oplus N, H \oplus D_8 \oplus D_4$ the sextic curve $S$ is the union of a line and an irreducible quintic.
\item There is a  divisor $\mathpzc{D}_2 \in \mathrm{NS}(\mathcal{X})$, which is  nef,  of square-two, and invariant under the action of $\imath_\mathcal{X}$. For the linear system $\vert \mathpzc{D}_2 \vert$ the corresponding rational map is the projection onto $\mathbb{P}^2=\mathbb{P}(u, v, w)$ in Equation~(\ref{eqn:double-sextics_general_intro}).
\end{enumerate}
If $\rho_L > 10$ or $L = H \oplus A_1^{\oplus 8}$, then there are polynomials $C$ and $Q$ such that the minimal resolution of  $\mathcal{S}'$ in~(\ref{eqn:double-sextics_prime_general_intro}) is isomorphic to $\mathcal{X}$.
\end{theorem}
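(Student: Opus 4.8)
The plan is to proceed constructively, one Picard-number stratum at a time, exploiting the fact that $\mathrm{Aut}(\mathcal{X})\cong(\mathbb{Z}/2\mathbb{Z})^2$ forces a rigid internal structure on the three nontrivial involutions. First I would record that the character $\mathrm{Aut}(\mathcal{X})\to\{\pm1\}$ given by the action on $H^{2,0}(\mathcal{X})=\mathbb{C}\,\omega_\mathcal{X}$ is a surjective homomorphism whose kernel is the subgroup of symplectic automorphisms; since $\mathrm{Aut}(\mathcal{X})$ has order four, exactly one nontrivial element $\imath_\mathcal{X}$ is symplectic and the remaining two, $\jmath_\mathcal{X}$ and $k_\mathcal{X}$, are anti-symplectic, with $\imath_\mathcal{X}=\jmath_\mathcal{X}\circ k_\mathcal{X}$. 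The symplectic involution $\imath_\mathcal{X}$ acts trivially on the transcendental lattice, and its invariant sublattice $\mathrm{NS}(\mathcal{X})^{\imath_\mathcal{X}}$ can be computed explicitly from $L$ in each case of (\ref{eqn:lattices}). The central lattice-theoretic step is to exhibit inside $\mathrm{NS}(\mathcal{X})^{\imath_\mathcal{X}}$ a primitive class $\mathpzc{D}_2$ with $\mathpzc{D}_2^2=2$ that is nef; nefness I would verify by placing $\mathpzc{D}_2$ in the closure of the ample cone, ruling out $(-2)$-curves of negative intersection using the configuration of rational curves determined in \cite{Roulleau22}.

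Given such a $\mathpzc{D}_2$, Riemann--Roch gives $h^0(\mathpzc{D}_2)=3$, and, after checking base-point-freeness, the associated morphism realizes $\mathcal{X}$ as a double cover $\pi\colon\mathcal{X}\to\mathbb{P}^2=\mathbb{P}(u,v,w)$ branched along a sextic $S$, whose covering involution is anti-symplectic (the quotient $\mathbb{P}^2$ carries no holomorphic two-form) and which I therefore identify with $k_\mathcal{X}$. The key observation producing the normal form is that the remaining involution $\jmath_\mathcal{X}=\imath_\mathcal{X}\circ k_\mathcal{X}$ also fixes $\mathpzc{D}_2$ and hence descends to a nontrivial involution of $\mathbb{P}^2$; any such involution is conjugate under $\mathrm{PGL}_3$ to $w\mapsto-w$. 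The branch sextic $S$ must be invariant under this descended involution, so its defining polynomial contains only even powers of $w$, yielding exactly the bi-quadratic shape $c_0w^6+c_2(u,v)w^4+c_4(u,v)w^2+c_6(u,v)$ of (\ref{eqn:double-sextics_general_intro}); the fixed line $w=0$ then carries the genus-two curve $y^2=c_6(u,v)$ lying in the invariant locus of $\jmath_\mathcal{X}$. This simultaneously establishes item (3), with $\mathpzc{D}_2$ the $\imath_\mathcal{X}$-invariant square-two class whose linear system is the stated projection to $\mathbb{P}(u,v,w)$.

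Then comes the case analysis for items (1) and (2). The singularities of $S$ are the images under $\pi$ of the chains of $(-2)$-curves contracted by $|\mathpzc{D}_2|$, forming the exceptional locus over the singular points; tracking these through the Picard-number strata (the sections on $\rho_L=9,10,>10$) I expect to find that $S$ acquires a single $\mathbf{A}_{2n+1}$ singularity for $\rho_L=10+n$ and is smooth for $\rho_L=9$, by matching the length and parity invariants of $L$ against the local type of the resolution. The two distinguished lattices $H\oplus N$ and $H\oplus D_8\oplus D_4$ behave differently: here the square-two class I can produce forces the branch locus to split off the fixed line, so that $S$ becomes the union of a line and an irreducible quintic, and for $H\oplus N$ the sextic takes the special reducible shape $y^2=w\bigl(c_3w^2+a_4w+d_5\bigr)$; these I would treat by hand. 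Throughout, I would confirm that the family of sextics of the prescribed form has exactly $20-\rho_L$ moduli and generic N\'eron--Severi lattice equal to $L$, matching the coarse moduli space, which is what upgrades ``there exists a double sextic model'' to a statement about \emph{every} general $L$-polarized surface.

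For the final assertion, when $\rho_L>10$ (or $L=H\oplus A_1^{\oplus8}$) I would construct the second model $\mathcal{S}'$ by exchanging the roles of the two anti-symplectic involutions: a second primitive nef class $\mathpzc{D}_2'$ invariant under $\imath_\mathcal{X}$, now with covering involution $\jmath_\mathcal{X}$, gives a double cover of $\mathbb{P}^2$ whose branch locus splits as a conic union a quartic, i.e.\ $\tilde y^2=C\cdot Q$ as in (\ref{eqn:double-sextics_prime_general_intro}). Equivalently, this model arises by projecting the singular quartic normal form $\mathcal{K}\subset\mathbb{P}^3$ from the rational double point generalizing the Inose quartic, through the projection (\ref{dsextic}). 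I expect the principal obstacle to lie not in the formal descent arguments but in two concrete places: first, producing the nef square-two classes explicitly in each $L$ and certifying base-point-freeness together with the precise singularity type, without accidental extra contractions; and second, the bookkeeping that matches the abstract classes to honest polynomials $c_n$, $C$, $Q$ and verifies that the generic surface in each constructed family has N\'eron--Severi lattice exactly $L$, so that the birational models are genuinely canonical for $\rho_L>10$.
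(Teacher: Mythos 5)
Your route for items (1)--(3) is genuinely different from the paper's and is, in outline, workable: you start from the group structure (one symplectic involution $\imath_\mathcal{X}$, two anti-symplectic ones), produce an $\imath_\mathcal{X}$-invariant nef class $\mathpzc{D}_2$ of square two, and derive the bi-quadratic shape of the branch sextic from the fact that $\jmath_\mathcal{X}$ descends to a linear involution of $\mathbb{P}^2$. The paper proceeds in the opposite direction: it builds explicit models (the double cover of a degree-one del Pezzo surface for $\rho_L=9$ in Proposition~\ref{prop1}, and for $\rho_L\ge 10$ explicit birational transformations from the van Geemen--Sarti alternate fibration, as in Propositions~\ref{prop:rank10} and~\ref{prop:double_surface_S_rank11ff}) and only afterwards identifies $\mathpzc{D}_2$ in Roulleau's dual graphs. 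Two caveats on your side. First, invariance of $S$ under $w\mapsto -w$ only gives $f(u,v,-w)=\pm f(u,v,w)$; the odd case, in which the line $w=0$ splits off, must be excluded (or recognized) using Nikulin's fixed-locus data, and you should note that your uniform mechanism genuinely fails for $H\oplus N$ (fixed locus of genus $3$), not merely that it is ``treated by hand.'' Second, your closing remark about canonicity misses the actual mechanism: for $\rho_L>10$ the paper's normal form is canonical because the alternate fibration has a unique frame of multiplicity one (Proposition~\ref{prop:frame}, via the Festi--Veniani double-coset count), which is a substantive input, not bookkeeping; moduli counting alone does not yield uniqueness of the model for a \emph{fixed} surface.

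The genuine gap is in your final paragraph. You propose to obtain $\mathcal{S}'$ from ``a second primitive nef class $\mathpzc{D}'_2$ invariant under $\imath_\mathcal{X}$, now with covering involution $\jmath_\mathcal{X}$.'' This premise is false: the paper computes $\mathpzc{D}'_2=\mathpzc{H}-\mathrm{a}_1-\dots-\mathrm{a}_M$ in Theorem~\ref{thm_rank_ge11}~(5) and states explicitly in the subsequent remark that $\mathpzc{D}'_2$ is \emph{not} invariant under $\imath_\mathcal{X}$. Moreover, the hypothesis is self-defeating: if $\jmath_\mathcal{X}$ is the deck transformation of $|\mathpzc{D}'_2|$ and $\imath_\mathcal{X}^*\mathpzc{D}'_2=\mathpzc{D}'_2$, then also $k_\mathcal{X}^*\mathpzc{D}'_2=\mathpzc{D}'_2$, so $\imath_\mathcal{X}$ and $k_\mathcal{X}$ would descend to a common nontrivial linear involution of $\mathbb{P}^2$ preserving the branch sextic; but the actual branch curve of~(\ref{eqn:double-sextics_prime_general_rank_ge11}), namely $w(v+h_0 w)\bigl(c_2(u,v)w^2+e_3(u,v)w+d_4(u,v)\bigr)=0$, admits no nontrivial linear automorphism for general coefficients, so no such invariant class can produce the general $\mathcal{S}'$. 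The paper obtains $\mathcal{S}'$ by an entirely non-equivariant construction: explicit factorizations $b_8=d_4\,d'_4$ of the alternate-fibration data together with explicit birational maps (Proposition~\ref{prop:double_surface_Sp_rank11ff}), and for $L=H\oplus A_1^{\oplus 8}$ via Barth's even-eight criterion and a lattice overextension (Proposition~\ref{prop:double_surface_Sp_10}). Your mechanism for the last assertion of the theorem therefore fails as stated and must be replaced by an argument of this kind.
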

As an immediate consequence we observe that the non-trivial elements of $\mathrm{Aut}(\mathcal{X})$ in Theorem~\ref{thm1_intro} are the involutions $\imath_\mathcal{S}, \jmath_\mathcal{S}, k_\mathcal{S}$ with $\imath_\mathcal{S}= \jmath_\mathcal{S} \circ k_\mathcal{S}$ on the double sextic $\mathcal{S}$. We make the following:
\begin{remark}
\label{rem:branch_locus}
In general, the determination of the fixed locus of the antisymplectic involution $\jmath_\mathcal{X}$ was accomplished by Nikulin \cite{MR633160b}*{Thm.4.2.2}. By applying the result to the lattices in Equation (\ref{eqn:lattices}) --excluding the case $L = H \oplus N$-- it follows that the fixed locus is comprised of a smooth, irreducible curve of genus $g=2$ and $k$ rational curves, with $2 \leq k \leq 9$, such that $k=(\rho_L -\ell_L)/2$. For $L = H \oplus N$ the fixed locus consists of a genus-3 curve and two rational curves.
\end{remark}
\par Furthermore, the birational model $\mathcal{S}$ turns out to be canonical for $\rho_L >10$, in a sense that we explain below. To see this, let us first recall the following facts: We define a \emph{Jacobian elliptic fibration} on $\mathcal{X}$ to be a pair $(\pi_\mathcal{X},\sigma_\mathcal{X})$ consisting of a proper map of analytic spaces $\pi_\mathcal{X} \colon \mathcal{X}  \rightarrow \mathbb{P}^1 =\mathbb{P}(u, v) $, whose general fiber is a smooth curve of genus one, and a section $\sigma_\mathcal{X} \colon \mathbb{P}^1 \to \mathcal{X}$ in the elliptic fibration $\pi_\mathcal{X}$. The group of sections of the Jacobian fibration is the \emph{Mordell-Weil group} $\operatorname{MW}(\mathcal{X}, \pi_\mathcal{X})$.  By a slight abuse of notation, we will not distinguish between the smooth elliptic K3 surface and its singular Weierstrass model. The latter is obtained from the former by blowing down the components of the reducible fibers which do not meet the section.
\par  In \cite{MR2274533} van Geemen and Sarti studied a certain family of Jacobian elliptic K3 surfaces. The family is given by
\beq
\label{eqn:vgs_intro}
  \mathcal{X}\colon \quad Y^2 Z  = X \big( X^2 + a_4(u, v)  X Z + b_8(u, v) Z^2  \big) \, ,
\eeq
where $a_4$ and $b_8$ are homogeneous polynomials of degree four and eight, respectively.  This Jacobian elliptic fibration always admits a section of order two, or, \emph{2-torsion section}.  The 2-torsion section is $(0:0:1)$ if the zero section is chosen as $(0:1:0)$. The fiberwise translation by the 2-torsion section acts as a Nikulin involution $\imath_\mathcal{X}$ on the elliptic fibration~(\ref{eqn:vgs_intro}). It is called \emph{van Geemen-Sarti involution}. The K3 surface $\mathcal{X}$ also admits the involution acting as the (fiberwise) hyperelliptic involution $\jmath_\mathcal{X}$, and a third involution obtained by composition $k_\mathcal{X} = \imath_\mathcal{X} \circ \jmath_\mathcal{X}$. 
\par It turns out that the general K3 surface $\mathcal{X}$ in~(\ref{eqn:vgs_intro}) has $\mathrm{NS}(\mathcal{X}) \cong  H \oplus N$. Conversely, it is known that every general $H \oplus N$-polarized K3 surface admits a \emph{unique} fibration given by Equation~(\ref{eqn:vgs_intro}); see  \cite{MR2274533}. A generalization of the statement for $L$ in Equation~(\ref{eqn:lattices}) with $\rho_L >10$ was proved by the first two authors in \cite{Clingher:2022}: in fact, any $\mathcal{X}$ with $\mathrm{NS}(\mathcal{X})\cong L$, with $L$ in (\ref{eqn:lattices}) and $\rho_L>10$, admits an $H\oplus N$-polarization and thus admits an Equation of type (\ref{eqn:vgs_intro}). As for nomenclature, we shall refer to a Jacobian elliptic fibration associated with an $H$-embedding into a lattice $L \cong H \oplus K$ where $K$ is a negative-definite lattice of ADE-type as  \emph{standard}.  An $H$-embedding on $\mathcal{X}$ determines \cite{MR2369941}*{Lemma~3.6} an elliptic fibration $\mathcal{X} \rightarrow \mathbb{P}^1 $, together with a choice of section in this fibration.  The standard Jacobian elliptic fibration will not be used in the remainder of the article. A Weierstrass equation for the standard fibration (which is different from Equation~(\ref{eqn:vgs_intro}) and does not have a 2-torsion section) and a detailed analysis of its singular fibers was given in \cite{Clingher:2022}. In contrast, the elliptic fibration associated with Equation~(\ref{eqn:vgs_intro}) carrying an underlying van Geemen-Sarti involution will be referred to as \emph{alternate} fibration.  We will then prove the following:
\begin{proposition}
\label{prop:canonical_intro}
Let $\mathcal{X}$ be a K3 surface with $\mathrm{NS}(\mathcal{X}) \cong L$, for an $L$ in Equation~(\ref{eqn:lattices}) with  $\rho_L > 10$. Then $\mathcal{X}$ is isomorphic to the minimal resolution of the double sextic $\mathcal{S}$ in Equation~(\ref{eqn:double-sextics_general_intro}) where $c_0=0$ and $c_2, c_4, c_6$ are uniquely determined by the alternate fibration, and $\imath_\mathcal{S}, \jmath_\mathcal{S}$ induce the van Geemen-Sarti and hyperelliptic involution.
\end{proposition}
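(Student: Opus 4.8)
The plan is to start from the alternate fibration, whose existence and essential uniqueness for $\rho_L>10$ is guaranteed by \cite{Clingher:2022}, and to produce the double sextic by an explicit change of variables dictated by the three involutions. Working in the affine chart $Z=1$ of Equation~(\ref{eqn:vgs_intro}), the van Geemen--Sarti involution is translation by the $2$-torsion section $(X,Y)=(0,0)$, which one computes to be
\begin{equation*}
\imath_\mathcal{X}\colon (X,Y)\ \longmapsto\ \Bigl(\tfrac{b_8}{X},\,-\tfrac{b_8\,Y}{X^2}\Bigr),
\end{equation*}
while $\jmath_\mathcal{X}\colon Y\mapsto -Y$ and $k_\mathcal{X}=\imath_\mathcal{X}\circ\jmath_\mathcal{X}$. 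First I would locate the two coordinates adapted to these involutions: the function $w=Y/X$, which is fixed by $k_\mathcal{X}$ and negated by both $\imath_\mathcal{X}$ and $\jmath_\mathcal{X}$, and the function $y=X-b_8/X$, which is fixed by $\jmath_\mathcal{X}$ and negated by $\imath_\mathcal{X}$ and $k_\mathcal{X}$. Using $w^2=X+a_4+b_8/X$ one eliminates $X$ and obtains the single relation
\begin{equation*}
y^2=\bigl(w^2-a_4\bigr)^2-4\,b_8=w^4-2\,a_4\,w^2+\bigl(a_4^2-4\,b_8\bigr),
\end{equation*}
which is a biquadratic double sextic of the form~(\ref{eqn:double-sextics_general_intro}) with \emph{no} $w^6$-term, i.e.\ with $c_0=0$, and with $c_2,c_4,c_6$ explicit polynomials in $a_4,b_8$. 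Note that $X$ is recovered from $(w,y)$ via $2X=(w^2-a_4)+y$, so the change of variables is birational of degree one.

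The identification of the involutions is then immediate: in the coordinates $(w,y)$ one reads off $\jmath_\mathcal{X}\colon(w,y)\mapsto(-w,y)=\jmath_\mathcal{S}$ and $\imath_\mathcal{X}\colon(w,y)\mapsto(-w,-y)=\imath_\mathcal{S}$, so that $\jmath_\mathcal{S}$ induces the hyperelliptic involution and $\imath_\mathcal{S}$ the van Geemen--Sarti involution, exactly as claimed. The same conclusion can be reached intrinsically: the projection of the double sextic away from the singular point of $S$ recovers the fibration $[u:v:w]\mapsto[u:v]$, whose generic fibre is the genus-one curve $y^2=c_2w^4+c_4w^2+c_6$ precisely because $c_0=0$; on this fibre $\imath_\mathcal{S}\colon(w,y)\mapsto(-w,-y)$ is fixed-point-free, hence translation by a $2$-torsion point, whereas $\jmath_\mathcal{S}\colon w\mapsto-w$ has four fixed points, hence is a hyperelliptic-type negation. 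This is the structural meaning of the condition $c_0=0$: it is exactly what turns the projection from the base point of the sextic into an elliptic rather than a genus-two fibration, and that elliptic fibration is the alternate one.

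Next I would promote the birational map $(X,Y)\mapsto(w,y)$ to an isomorphism of minimal resolutions. Since birational K3 surfaces are isomorphic, it suffices to check that the sextic $S$ is admissible, i.e.\ has at worst rational double points; by Theorem~\ref{thm1_intro}(2) its only singularity is the point $[0:0:1]$ of type $\mathbf{A}_{2n+1}$, so the minimal resolution of $\mathcal{S}$ is a K3 surface birational to $\mathcal{X}$ and hence isomorphic to it. To pin down the stated homogeneous form I would identify the linear system $\vert\mathpzc{D}_2\vert$ of Theorem~\ref{thm1_intro}(3)---the $\imath_\mathcal{X}$-invariant nef class of square two whose three sections are the coordinates $u,v,w$---and verify that the complete linear system realizes the projection onto $\mathbb{P}(u,v,w)$; a rescaling of $w$ and $y$ by sections pulled back from the base then balances the weights so that $c_2,c_4,c_6$ acquire their degrees $2,4,6$ while the vanishing $c_0=0$ is preserved.

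Finally, uniqueness of $c_2,c_4,c_6$ follows from the uniqueness of the alternate fibration for $\rho_L>10$ established in \cite{Clingher:2022}: the fibration determines $a_4,b_8$ up to the residual action of $\mathrm{PGL}_2$ on $(u,v)$ together with the scalings preserving~(\ref{eqn:vgs_intro}), and the explicit formulas above transport this to a uniqueness statement for the double sextic data. I expect the main obstacle to be precisely this last identification: showing that the explicit birational chart is cut out by the \emph{complete} linear system $\vert\mathpzc{D}_2\vert$ in the correct homogeneous coordinates---so that the weight-balancing produces exactly the normalized degrees and the base point $[0:0:1]$ carries the predicted $\mathbf{A}_{2n+1}$ singularity---rather than the elementary coordinate computation, which is routine.
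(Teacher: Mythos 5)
Your elimination of $X$ via the two anti-invariant functions, and your identification of $\jmath_\mathcal{S}\colon(w,y)\mapsto(-w,y)$ and $\imath_\mathcal{S}\colon(w,y)\mapsto(-w,-y)$, reproduce exactly the paper's explicit birational maps~(\ref{eqn:birational1})--(\ref{eqn:birational2}) in the proof of Proposition~\ref{prop:double_surface_S_rank11ff}, so the skeleton of your argument is the paper's. But there is a genuine gap at the step you dismiss as routine weight-balancing, and it is precisely where the hypothesis $\rho_L>10$ must enter. Your affine relation $y^2=(w^2-a_4)^2-4b_8$ holds for \emph{every} van Geemen--Sarti fibration, including the case $\rho_L=10$, $L=H\oplus N$, where the proposition fails: there the sextic model~(\ref{eqn:double_sextic_rank10}) has a different shape and depends on a non-canonical factorization of $b_8$. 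To homogenize, with $w$ of weight $1$ and $y$ of weight $3$ one is forced to set $w=Y/(\ell X)$ and $y=(b_8/X-X)/m$ for linear forms $\ell, m$; polynomiality of $c_2=\ell^4/m^2$ forces $m=\ell$ up to scale, and then $c_6=(a_4^2-4b_8)/\ell^2$ is a polynomial of degree $6$ only if $\ell^2$ divides the degree-$8$ discriminant $a_4^2-4b_8$. This divisibility is not automatic; it is the content of the paper's Lemma~\ref{lem:polys}, proved from the frame classification in Table~\ref{tab:extension0}: for $\rho_L=10+n$, $n\geq 1$, the alternate fibration has a distinguished reducible fiber (type $D_{2n}$ or $E_7$, or for $\rho_L=11$ the unique $A_1$ fiber on which the section and the $2$-torsion section meet the \emph{same} component), and moving it to $v=0$ yields $b_8=\tfrac14 a_4^2-v^2b''_6$, i.e.\ $\ell=v$. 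The uniqueness of this fiber, together with the uniqueness of the alternate fibration (Proposition~\ref{prop:frame}), is also what makes $c_2=v^2$, $c_4$, $c_6$ canonical; your appeal to \cite{Clingher:2022} covers the fibration itself but not the canonical choice of the point of $\mathbb{P}(u,v)$ at which the discriminant acquires its square factor.

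A secondary, structural flaw: you quote Theorem~\ref{thm1_intro}(2) to conclude that $S$ is admissible with a single $\mathbf{A}_{2n+1}$ point, but in the paper that statement is itself established by Proposition~\ref{prop:double_surface_S_rank11ff}(1),(2) --- i.e.\ by the very result you are proving --- so this is circular. In a self-contained argument one must verify directly from $y^2=v^2w^4-a_4(u,v)\,w^2+b''_6(u,v)$ that the only singularity of the branch sextic lies at $[u:v:w]=[0:0:1]$ and is a rational double point (and that for $L=H\oplus D_8\oplus D_4$ the sextic degenerates to a line plus a quintic), as the paper does by explicit computation; birationality of K3 minimal resolutions then finishes the proof as you say. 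The remainder of your argument --- the degree-one inverse $2X=(w^2-a_4)+y$ (matching the paper's $X=-\tfrac{1}{2}\bigl(vy+a_4-v^2w^2\bigr)Z$ after the rescaling by $v$) and the fixed-point characterization of the two involutions on the generic fiber --- is correct and consistent with the paper.
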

Based on Proposition~\ref{prop:canonical_intro} we call the equation for the double sextic $\mathcal{S}$ for $\rho_L > 10$ \emph{canonical}.
\par Up to this point, our focus has primarily been on the double sextic surface $\mathcal{S}$. However, the general $L$-polarized K3 surface $\mathcal{X}$, with $L$ in Equation~(\ref{eqn:lattices}) and $\rho_L > 10$ or $L = H \oplus A_1^{\oplus 8}$, also possesses a second birational model. This model is that of the double sextic $\mathcal{S}'$ in Equation~(\ref{eqn:double-sextics_prime_general_intro}). Within the context of $\mathcal{S}'$, there is also a quartic projective hypersurface $\mathcal{K} \subset \mathbb{P}^3 = \mathbb{P}(u, v, w, y)$ or simply quartic, given by
\beq
\label{eqn:quartic_general_intro}
\mathcal{K}\colon \quad y^2 C(u, v, w) = Q(u, v, w) \, ,
\eeq
where $C$ and $Q$ are the same polynomials as those used in Equation~(\ref{eqn:double-sextics_prime_general_intro}) to define $\mathcal{S}'$. It follows that for every quartic $\mathcal{K}$, we obtain an associated double sextic $\mathcal{S}'$ as defined by Equation~(\ref{eqn:double-sextics_prime_general_intro}), and vice versa, where the relation between Equation~(\ref{eqn:double-sextics_prime_general_intro}) and Equation~(\ref{eqn:quartic_general_intro})  is $\tilde{y}= C(u, v, w) \, y$.
\par The significance of this quartic can be seen as follows. In the works \cites{MR2369941,MR2279280}, it was demonstrated that a K3 surface $\mathcal{X}$ over the complex numbers with N\'eron-Severi lattice $H \oplus E_8 \oplus E_8$ possesses a birational model isomorphic to the quartic $\mathcal{K}_{\mathrm{Inose}}$ in $\mathbb{P}^3=\mathbb{P}(\mathbf{X}, \mathbf{Y}, \mathbf{Z}, \mathbf{W})$, as defined by the equation
\beqn
\mathcal{K}_{\mathrm{Inose}}\colon \quad   0= \  \mathbf{Y}^2 \mathbf{Z} \mathbf{W}-4 \mathbf{X}^3 \mathbf{Z}+3 \alpha \mathbf{X} \mathbf{Z} \mathbf{W}^2+ \beta \mathbf{Z} \mathbf{W}^3 
-   \frac{1}{2} \big( \mathbf{Z}^2  \mathbf{W}^2 +  \mathbf{W}^4 \big) \, ,
\eeqn
where $\alpha$ and $\beta$ are two complex parameters. This two-parameter family was initially introduced by Inose in \cite{MR578868} and is called \emph{Inose quartic}. Notably, the quartic $\mathcal{K}$ defined in Equation~(\ref{eqn:quartic_general_intro}) coincides with the Inose quartic when $\rho_L=18$. Furthermore, it provides multi-parameter generalizations of the Inose quartic for Picard numbers $10 \le \rho_L <18$. We will prove the following:
\begin{theorem}
\label{thm2_intro}
Let $\mathcal{X}$ be a K3 surface with $\mathrm{NS}(\mathcal{X}) = L$, for  $L = H \oplus A_1^{\oplus 7}$ or $L$ in Equation~(\ref{eqn:lattices}) with $L \not = H(2) \oplus A_1^{\oplus 7}, H \oplus N$.  There exists a quartic $\mathcal{K}$ whose minimal resolution is isomorphic to $\mathcal{X}$ and has the following properties:
\begin{enumerate}
\item There are polynomials $C, Q$ so that $\mathcal{K}$  is given by~(\ref{eqn:quartic_general_intro}).
\item $\mathcal{K}$ has one rational double-point at $\mathrm{p}_1\colon \,  [u: v: w : y]= [0: 0: 0: 1]$ for $\rho_L=9$, and for $\rho_L \ge 10$  two rational double-point singularities at
\beqn
\label{eqn:singular_points_intro}
   \mathrm{p}_1\colon \ [u: v: w : y]= [0: 0: 0: 1] \,, \qquad   \mathrm{p}_2\colon \ [u: v: w : y]= [0 : 0: 1: 0] \,.
\eeqn  
\item For $\rho_L \ge 10$ there is an explicit birational map between $\mathcal{K}$ and $\mathcal{S}$, such that the polarization divisor for $\mathcal{K}$ is $\mathpzc{H} = \mathpzc{D}_2 + \mathrm{b}_1 + \dots + \mathrm{b}_N$ where $\mathrm{b}_i$ are the exceptional curves of the singularity at $\mathrm{p}_2$, and  $\mathpzc{D}_2$ is the same as in Theorem~\ref{thm1_intro}.
\end{enumerate}
\end{theorem}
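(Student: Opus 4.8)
The plan is to prove Theorem~\ref{thm2_intro} by leveraging the explicit double sextic model $\mathcal{S}'$ of the form (\ref{eqn:double-sextics_prime_general_intro}) already established in Theorem~\ref{thm1_intro}, and converting it into the quartic $\mathcal{K}$ via the stated substitution. Concretely, the final statement guarantees the existence of $\mathcal{S}'$ for the lattices in question (those with $\rho_L > 10$ together with $L = H \oplus A_1^{\oplus 8}$), and a short additional argument is needed to extend coverage to $L = H \oplus A_1^{\oplus 7}$ and to verify the precise singularity structure of $\mathcal{K}$. First I would take the branch decomposition $S' = C \cup Q$ into a conic and a quartic provided by Theorem~\ref{thm1_intro}, write the double sextic as $\tilde{y}^2 = C(u,v,w)\,Q(u,v,w)$, and perform the birational change of variable $\tilde{y} = C(u,v,w)\,y$. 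Dividing by $C$ then yields exactly $y^2 C(u,v,w) = Q(u,v,w)$, which is the quartic $\mathcal{K}$ of (\ref{eqn:quartic_general_intro}), establishing item~(1); the map is manifestly birational since $C \not\equiv 0$, and its inverse is recorded by the same relation.

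Next I would verify item~(2), the location and nature of the singular points. The candidate points $\mathrm{p}_1 = [0:0:0:1]$ and $\mathrm{p}_2 = [0:0:1:0]$ are forced by the shape of the defining equation: at $\mathrm{p}_1$ the term $y^2 C(u,v,w)$ dominates and one reads off the singularity type from the lowest-order part of $C$ near that point, while $\mathrm{p}_2$ sits at $w = \infty$ in the affine chart and its type is governed by the leading behavior of $Q$ and $C$ in $w$. My approach would be to pass to the two affine charts containing $\mathrm{p}_1$ and $\mathrm{p}_2$, expand $C$ and $Q$ in local coordinates, and identify the tangent cone so as to recognize the rational double point (ADE) type. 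That the singularities are \emph{rational double points} (rather than worse) follows from the admissibility of the associated sextic $S'$, which is already part of the double sextic structure established in Theorem~\ref{thm1_intro}: admissibility means the double cover has at worst rational double point singularities, and the quartic $\mathcal{K}$ shares its resolution with $\mathcal{S}'$ by the birational equivalence just constructed. I would also have to treat the $\rho_L = 9$ case separately, where only $\mathrm{p}_1$ appears as a singular point, reflecting the absence of the extra degeneration present for $\rho_L \ge 10$.

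For item~(3) I would construct the explicit birational map between $\mathcal{K}$ and $\mathcal{S}$ by composing the map $\mathcal{K} \dashrightarrow \mathcal{S}'$ from item~(1) (inverted) with the birational equivalence $\mathcal{S}' \dashrightarrow \mathcal{S}$ between the two double sextic models, which is supplied by Theorem~\ref{thm1_intro} since both have minimal resolution $\mathcal{X}$. The content is then the identification of the polarization divisor $\mathpzc{H}$ on $\mathcal{K}$. Here I would use that the hyperplane class of the quartic $\mathcal{K} \subset \mathbb{P}^3$ pulls back, under the resolution $\mathcal{X} \to \mathcal{K}$, to a divisor that differs from $\mathpzc{D}_2$ precisely by the exceptional curves over the resolved singularity. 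Since projection from $\mathrm{p}_1$ recovers the double sextic structure $\vert \mathpzc{D}_2 \vert$ onto $\mathbb{P}^2 = \mathbb{P}(u,v,w)$ (as in Theorem~\ref{thm1_intro}(3)), blowing up $\mathrm{p}_2$ and its infinitely near points contributes exactly the chain $\mathrm{b}_1 + \dots + \mathrm{b}_N$ of exceptional curves, giving $\mathpzc{H} = \mathpzc{D}_2 + \mathrm{b}_1 + \dots + \mathrm{b}_N$. This is verified by intersection-theoretic bookkeeping: one checks $\mathpzc{H}^2 = 4$ and that $\mathpzc{H}$ is the class cut out by hyperplanes, matching the degree of the quartic.

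The main obstacle I expect is item~(2), specifically the case-by-case determination of the rational double point \emph{types} at $\mathrm{p}_1$ and $\mathrm{p}_2$ across the full list of lattices, since the local analytic type of the tangent cone varies with the degeneration pattern of $C$ and $Q$ dictated by $\rho_L$. This requires a careful lattice-theoretic matching: one must confirm that the combined exceptional configuration of the resolution of $\mathcal{K}$, together with the pullback of the hyperplane class, generates a lattice isometric to $L$. The bookkeeping that $\mathpzc{H}$ is genuinely a polarization (nef and of the correct self-intersection) and the verification that the two singular points carry precisely the ADE components needed to account for the rank and discriminant of $L$ is where the essential work lies; the birational constructions in items~(1) and~(3) are comparatively formal consequences of the models already built in Theorem~\ref{thm1_intro}.
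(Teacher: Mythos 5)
Your overall architecture does match the paper's: pass from $\mathcal{S}'$ to $\mathcal{K}$ via $\tilde{y}=C\,y$, deduce that the singularities are rational double points from the fact that the minimal resolution is the K3 surface $\mathcal{X}$, and identify $\mathpzc{H}$ by a projection argument. However, there is a concrete error in your item~(3): you have swapped the two projection centers. It is the projection from $\mathrm{p}_2$ (not $\mathrm{p}_1$) that recovers the double sextic $\mathcal{S}$ and hence the linear system $\vert \mathpzc{D}_2\vert$; projection from $\mathrm{p}_1$ recovers $\mathcal{S}'$, whose associated system is $\mathpzc{D}'_2 = \mathpzc{H} - \mathrm{a}_1 - \dots - \mathrm{a}_M$ (see the proofs of Proposition~\ref{prop_divisor_rank10} and Theorem~\ref{thm_rank_ge11}, and the remark following Theorem~\ref{thm2_intro}). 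The correct bookkeeping is: for a plane $H$ through $\mathrm{p}_2$ one has $\psi^*(H)=\phi^*(\ell)=\psi^*(Q)+\mathrm{b}_1+\dots+\mathrm{b}_N$ with $\psi^*(Q)\sim\mathpzc{D}_2$, whence $\mathpzc{H}=\mathpzc{D}_2+\mathrm{b}_1+\dots+\mathrm{b}_N$. With your assignment the same computation would instead yield $\mathpzc{H}=\mathpzc{D}'_2+\mathrm{a}_1+\dots+\mathrm{a}_M$, not the asserted formula. Two further cautions on this item: checking $\mathpzc{H}^2=4$ does not identify the hyperplane class (at rank 10 the paper notes there are 128 square-four divisors contracting two $(-2)$-curves), and at $\rho_L=18$ the singularity at $\mathrm{p}_2$ is of type $\mathbf{E}_6$, so the exceptional contribution carries multiplicities ($\mathpzc{H}=\mathpzc{D}_2+\mathrm{b}_1+2\mathrm{b}_2+3\mathrm{b}_3+2\mathrm{b}_4+2\mathrm{b}_5+\mathrm{b}_6$); your picture of "the chain $\mathrm{b}_1+\dots+\mathrm{b}_N$ with unit coefficients" fails there.

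Two inputs you defer are also genuinely missing. First, $L=H\oplus A_1^{\oplus 7}$ is not among the lattices of Theorem~\ref{thm1_intro}, so your starting model $\mathcal{S}'$ does not exist "by Theorem~\ref{thm1_intro}" in the $\rho_L=9$ case; the paper's route is Barth's even-eight theorem (Proposition~\ref{prop:Barth} and its converse from \cite{MR1922094}) together with Lemma~\ref{lem:ell_fibration_9}, which pins down $\mathrm{NS}(\mathcal{X})$ via the fibration with fibers $7I_2+10I_1$ and trivial Mordell--Weil group. Both directions (a split sextic realizes $H\oplus A_1^{\oplus 7}$, and every such K3 arises this way) require proof; your "short additional argument" is exactly this nontrivial content. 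Second, for $\rho_L\ge 10$ your chart computation at $\mathrm{p}_1,\mathrm{p}_2$ has nothing concrete to expand unless the degeneration pattern of $C$ and $Q$ is normalized: Theorem~\ref{thm1_intro} only asserts existence of \emph{some} $C,Q$. The tangent cone at $\mathrm{p}_1$ is always the plane pair $w(v+h_0 w)$, so it determines only that the point is of type $\mathbf{A}$; the index, and indeed whether $\mathrm{p}_2$ is singular at all, depends on the vanishing orders of $c_2, e_3, d_4$ at $v=0$. In the paper this normalization ($C=w(v+h_0w)$, $Q$ with node at $[0:0:1]$, and the explicit table of vanishing orders) is extracted from the \emph{uniqueness} of the alternate fibration (Proposition~\ref{prop:frame}, Lemma~\ref{lem:polys}, Proposition~\ref{prop:double_surface_Sp_rank11ff}), and the singularity types of Table~\ref{tab:sings_on_K_and_S} are then computed from it (Corollary~\ref{prop:regular}). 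Without importing that structure your item~(2) cannot be carried out as described; with it, your plan reduces to the paper's proof.
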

We also make the following:
\begin{remark}
The projection of  $\mathcal{K}$ onto $\mathbb{P}^2$ with center either $\mathrm{p}_1$ or  $\mathrm{p}_2$  recovers the two double sextic $\mathcal{S}'$ or $\mathcal{S}$, respectively, for $\rho_L \ge 10$.
\end{remark}
\begin{remark}
We also provide explicit expressions for the divisors in Theorem~\ref{thm1_intro} and~\ref{thm2_intro} in terms of the dual graph of rational curves for the associated $L$-polarized K3 surfaces.
\end{remark}
\section{K3 surfaces of Picard number 9}
\label{sec:rank9}
In Picard number 9, we consider two lattice polarizations, namely $H(2) \oplus A_1^{\oplus 7}$ and  $H \oplus A_1^{\oplus 7}$. Only the general K3 surface $\mathcal{X}$ polarized by the former lattice satisfies $\mathrm{Aut}(\mathcal{X}) \cong (\mathbb{Z}/2)^2$. In contrast, the general $H \oplus A_1^{\oplus 7}$-polarized K3 surface $\mathcal{X}$ has $\mathrm{Aut}(\mathcal{X}) \cong \mathbb{Z}/2$.  However, considerations regarding an $H \oplus A_1^{\oplus 7}$-polarization will turn out to be useful for constructions in higher Picard number.
\subsection{\texorpdfstring{$H(2) \oplus A_1^{\oplus 7}$-polarized K3 surfaces}{Families polarized by H(2) + 7 A1}}
It is known that for at most eight points on $\mathbb{P}^2$ in general position, there always exists an irreducible sextic curve containing these points as nodes. 
\par Let us examine the situation where a sextic curve $S'$ in $\mathbb{P}^2$ has the eight ordinary double points $\mathscr{P} = \{ s_1, \dots, s_8\}$ in general position in $\mathbb{P}^2$. In this situation, we use the construction described in detail in \cite{MR4001046}, to obtain a rational map to weighted projective space: The space of plane cubics through $\mathscr{P}$ has dimension 2. Let $\{U, V \}$ be a basis for the pencil.  The space of plane sextics with double points at $\mathscr{P}$ has dimension 4 and is generated by $\{ U^2, UV , V^2, W\}$ for some sextic $W$.  Further, the space of plane nonics with 3-fold points at $\mathscr{P}$ has dimension 7 and is generated $\{ U^3 , U^2V, UV^2, V^3, UW, VW, R\}$ for some nonic $R$.  
\par For a chosen basis of nonics with 3-fold points at $\mathscr{P}$, one obtains the rational map
\beq
\begin{split}
 \psi \colon \quad \mathbb{P}^2  & \ \dasharrow  \ \mathbb{P}(1,1,2,3)\, , \\
 [x : y : z]  &\ \mapsto \ \big[ U(x, y, z) \ : \ V(x, y, z) \ : \ W(x, y, z) \ : \ R(x, y, z) \big] \, .
\end{split} 
\eeq
The closure of the image of $\psi$ will be denoted by $\mathcal{Z} = \overline{\mathrm{Im}(\psi)}$.  By construction, it is a degree-one del Pezzo surface $\mathcal{Z}$, i.e., the blow up of $\mathbb{P}^2$ in $\mathscr{P}$; see \cite{MR4001046}*{Sec.~3.1}. We have the following:
\begin{lemma}
\label{lem:Z}
The degree-one del Pezzo surface $\mathcal{Z}$ in $\mathbb{P}(1, 1, 2, 3) = \mathbb{P}(U, V, W, R)$ is given by the equation
\beq
\label{eqn:WP}
\mathcal{Z}\colon \quad R^2 = c_0 W^3 + c_2(U, V) W^2 + c_4(U, V) W + c_6(U, V) \, ,
\eeq
where $c_2, c_4, c_6$ are polynomials of degree 2, 4, and 6, respectively, and $c_0 \in \mathbb{C}^\times$. In particular, $\mathcal{Z}$ does not pass through the singular points $[0:0:1:0]$ or $[0:0:0:1]$, i.e., the singular points of $\mathbb{P}(1,1,2,3)$.
\end{lemma}
\begin{proof}
The anti-canonical ring of a del Pezzo surface of degree one is generated by its elements of degree at most three. By computing the dimension of $H^0( \mathcal{O}(-n K_\mathcal{Z}))$ for $n= 1, 2, 3$ it follows that the anti-canonical ring is generated by  $H^0( \mathcal{O}(- K_\mathcal{Z}))$ and one element each of $H^0( \mathcal{O}(-n K_\mathcal{Z}))$ for $n= 2, 3$\ \cite{MR2062787}*{Sec.~3.5}.  Thus, the image of $\psi$ is cut out by a single equation of degree six.
\end{proof}
Closely related to $\mathcal{Z}$ is a rational elliptic surface $\widetilde{\mathcal{Z}}  \rightarrow \mathbb{P}^1 =\mathbb{P}(u, v)$ with section, given by
\beq
\label{eqn:srfc_Z_RES}
\widetilde{\mathcal{Z}}\colon \quad Y^2 Z = c_0 X^3 + c_2(u, v) X^2 Z + c_4(u, v)  X Z^2 + c_6(u, v) Z^3 \, ,
\eeq
whose general fiber in $\mathbb{P}^2 = \mathbb{P}(X, Y, Z)$  is a smooth curve of genus one. It is obtained by blowing up $\mathcal{Z}$ one time, namely at the ninth base point of $\{U, V\}$. Conversely, if one blows down the section of $\widetilde{\mathcal{Z}}$, given by $O: [X: Y: Z] = [0:1:0]$, one recovers $\mathcal{Z}$ using
\beq
 \big( [u:v], \, [X:Y:Z] \big) \ \mapsto \  [ U : V : W : R ] = [ uZ : vZ: XZ : YZ^2 ] \,.
\eeq
In our situation, the remaining exceptional divisors are not contained in the fibers, but yield independent sections of the fibration and generate a lattice of type $E_8$. Having chosen $O$ as section of the elliptic fibration, the N\'eron-Severi lattice $\mathrm{NS}(\mathcal{Z})$ is generated by the pull-back $\mathcal{L}'$ of a line and the 8 exceptional divisors $ \mathrm{E}_i= [\sigma_i]$, $i=1, \dots, 8$ that are obtained from the classes of the sections.  The anti-canonical divisor of $\mathcal{Z}$ is given by
 \beq
  -K_\mathcal{Z} = 3 \mathcal{L}' -  \mathrm{E}_1 - \dots -  \mathrm{E}_8 \,.
 \eeq 
\par The strict transform of a fixed nodal sextic $S'$ in $\mathcal{Z}$ is a smooth genus-2 curve $B$.  Without loss of generality, we can assume that this sextic is given by $W=0$. The genus-2 curve inside $\mathcal{Z}$ then is the hyperelliptic curve $B \colon R^2 = c_6(U, V)$. Note that any other choice of genus-2 curve is obtained by  a shift $W \mapsto f_0 W + f_2(U, V)$ where $f_0 \in \mathbb{C}^\times$ and $f_2$ is a homogeneous polynomial of degree 2. Under such shift, coefficients in Equation~(\ref{eqn:WP}) will change according to
\beqn
 c_0 \mapsto c_0 f_0^3, \ c_2 \mapsto (c_2  + 3 c_0 f_2) f_0^2, \ c_4 \mapsto (c_4 + 2 c_2 f_2 + 3 c_0 f_2^2) f_0, \ c_6 \mapsto c_6 + c_4 f_2 + c_2 f_2^2 + c_0 f_2^3 \,
\eeqn
as the genus-2 curve $B$ moves in the 4-dimensional family $f_0 W + f_2(U, V) =0$.
\par In the situation above, it is known that the minimal resolution of the double cover of $\mathcal{Z}$ branched over the nodal sextic $S'$ is a K3 surface $\mathcal{X}$. Explicitly, from the double cover
\beq
\label{eqn:map_rank9}
 \varphi\colon \ \mathcal{S} \subset \mathbb{P}(1, 1, 1, 3)  \rightarrow  \mathcal{Z}  \subset \mathbb{P}(1, 1, 2, 3) , \ [u: v : w : y ]  \mapsto  [ U : V : W : R ]  = [ u : v: w^2: y] ,
\eeq
one obtains a preimage, given by
\beq
\label{eqn:canonical_sextic_rank9star}
 \mathcal{S}\colon \quad y^2 = c_0 w^6 +c_2(u, v) w^4 + c_4(u, v) w^2 + c_6(u, v) \, .
\eeq
The choice of 8 general points $s_1,\ldots,s_8$ is essential for the construction of $\mathcal{Z}$, but the choice of the nodal sextic $S'$ defined by $W=0$ is an extra (non-unique) choice. 
\par We make the following:
\begin{remark}
It might be worthwhile pointing out that $\tilde{\mathcal{Z}}$ is the blow up of $\mathbb{P}^2$ in the nine base points of the pencil of cubics spanned by $U,V$ and that this pencil induces the elliptic fibration on $\tilde{\mathcal{Z}}$.
Lemma~\ref{lem:Z} provides an effectiveway to obtain the Weierstrass model of this fibration where the section at infinity is given by the 9th base point (besides $s_1,\ldots,s_8$).\\
\end{remark}
Obviously, there is a double cover $\pi_\mathcal{S}: \mathcal{S}  \rightarrow \mathbb{P}^2 = \mathbb{P}(u, v, w)$. 
We have the following:
\begin{proposition}
\label{prop0}
For general $\mathcal{Z}$, the covering $\pi_\mathcal{S}$ is branched over a smooth irreducible sextic curve $S \subset  \mathbb{P}(u, v, w)$ of genus 10 which admits 120 6-tangent conics.
\end{proposition}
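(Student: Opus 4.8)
The plan is to handle the three assertions — smoothness/irreducibility, the genus, and the enumeration of $6$-tangent conics — separately, with the last being the substantive point.

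First, smoothness and irreducibility. For general $c_0\in\mathbb{C}^\times$ and general $c_2,c_4,c_6$, I would show the sextic $S=\{F=0\}$, with $F=c_0w^6+c_2w^4+c_4w^2+c_6$, is nonsingular by a Jacobian/Bertini analysis: a singular point forces $F=F_u=F_v=F_w=0$, and since $F_w=2w\,(3c_0w^4+2c_2w^2+c_4)$ confines singular points to $\{w=0\}$ or to a quartic, a dimension count over the general parameters rules out common solutions. This is consistent with $\rho_\mathcal{X}=9$, since a nodal $S$ would contribute extra $(-2)$-classes. Irreducibility is then automatic: any reducible plane curve of degree $\ge 2$ acquires singular points at the mutual intersections of its components by B\'ezout, so a smooth plane curve is irreducible. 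The genus is the Pl\"ucker value $p_a(S)=\binom{6-1}{2}=10$ for a smooth plane sextic.

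For the $6$-tangent conics, the key observation is that a conic $C\subset\mathbb{P}(u,v,w)$ is $6$-tangent to $S$ exactly when its preimage $\pi_\mathcal{S}^{-1}(C)$ splits into two components in the double cover, i.e. when $F|_C$ is a perfect square in the function field of $C$. I would produce these conics explicitly from the sections of the associated rational elliptic surface $\widetilde{\mathcal{Z}}$ of Equation~(\ref{eqn:srfc_Z_RES}). A section $P$ disjoint from the zero section is given by $X=p(u,v)$ of degree $2$ and $Y=q(u,v)$ of degree $3$ with $q^2=c_0p^3+c_2p^2+c_4p+c_6$. Under the identifications $W=w^2$, $R=y$ relating $\mathcal{S}$, $\mathcal{Z}$ and $\widetilde{\mathcal{Z}}$, the section determines the conic
\[
C_P\colon \quad w^2=p(u,v),
\]
and restricting $F$ yields $F|_{C_P}=c_0p^3+c_2p^2+c_4p+c_6=q(u,v)^2$, a perfect square. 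Hence $C_P\cap S=2\{q=0\}|_{C_P}$ is a doubled divisor supported on $6$ points, so $C_P$ is $6$-tangent; the two components $y=\pm q$ of $\pi_\mathcal{S}^{-1}(C_P)$ are interchanged by $k_\mathcal{S}$, and for general parameters $C_P$ is a smooth conic.

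Finally I would carry out the count. The inversion $P\mapsto -P=(p,-q)$ produces the same conic $C_P$, so conics are indexed by pairs $\{P,-P\}$, and distinct pairs give distinct quadratics $p$ hence distinct conics. For general parameters $\widetilde{\mathcal{Z}}$ has Mordell--Weil lattice $E_8$ with trivial torsion, and its $240$ sections of minimal height — equivalently the $240$ $(-1)$-curves on the degree-one del Pezzo $\mathcal{Z}$, the roots of $E_8$ — are exactly those with $X$ of degree $2$; pairing by inversion yields $240/2=120$ conics. The main obstacle is the \emph{converse}: to conclude there are exactly $120$ one must show every $6$-tangent conic arises this way. For this I would argue that a $6$-tangent conic lifts to a pair of rational $(-2)$-curves on $\mathcal{X}$ which push down to a $(-1)$-curve on $\mathcal{Z}$, inverting the above correspondence, so that the exact count follows from $\mathcal{Z}$ having precisely $240$ $(-1)$-curves, or from a direct enumeration of the admissible curve classes in $\mathrm{NS}(\mathcal{X})\cong H(2)\oplus A_1^{\oplus 7}$. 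Checking that this correspondence is a clean bijection — in particular that every $6$-tangent conic is $\jmath_\mathcal{S}$-invariant and that no section is lost or double-counted — is the delicate part of the argument.
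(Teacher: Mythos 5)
Your proposal is correct and follows essentially the same route as the paper: the paper likewise observes that the branch sextic is generically smooth (hence irreducible of genus $10$) and produces the conics $C^{(i)}\colon w^2=p^{(i)}_2(u,v)$ from the $240$ sections disjoint from the zero section of the general rational elliptic surface $\widetilde{\mathcal{Z}}$ (via Shioda's Theorems 10.8 and 10.10, equivalently your $240$ roots of the Mordell--Weil lattice $E_8$), paired by inversion into $120$, with $6$-tangency coming from the perfect-square restriction $y^2=q^{(i)}_3(u,v)^2$ exactly as you argue. The converse you flag as the delicate point is not actually needed: the paper reads ``admits $120$ $6$-tangent conics'' as an existence claim and its proof stops where your construction does, relegating the identification of these conics with the pairs of $(-2)$-curves over the $240$ $(-1)$-curves of the del Pezzo surface $\mathcal{Z}$ to a remark citing \cite{Roulleau22}.
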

\begin{proof}
The double cover $\pi_\mathcal{S}\colon \mathcal{S}  \rightarrow \mathbb{P}(u, v, w)$ is branched over the curve 
\beq
 S \colon \quad 0 =  c_0 w^6 +c_2(u, v) w^4 + c_4(u, v) \, w^2 + c_6(u, v) \, ,
\eeq
which is generically a smooth plane sextic whence of genus 10. In particular, we must have $c_0 \neq 0$. 
\par It follows from \cite{MR1081832}*{Thm.~10.8} that on a rational elliptic surface the number of sections $P$ with intersection number $P\circ O= 0$ is finite and at most 240. Moreover, every such $P$ has a height pairing $\langle P, P \rangle \le 2$. Since the rational elliptic surface $\widetilde{\mathcal{Z}}$ in Equation~(\ref{eqn:srfc_Z_RES}) is general, there are exactly 240 sections, and their form can be made explicit using \cite{MR1081832}*{Thm.~10.10}. In our situation,  the sections, disjoint from the zero section $O$, must come in pairs $P_{i}^\pm$ of the form $X= p_i(U, V)$, $Y= \pm q_i(U, V)$, $Z=1$ with $i=1, \dots, 120$ where $p_i, q_i$ are non-trivial homogeneous polynomials of degree 2 and 3, respectively. As $P_i^\pm \circ O= 0$, every pair of sections pulls back to a pair of bi-sections $A^\pm_i$ on $\mathcal{S}$ so that
\beq
  w^2 = p_i( u, v) , \quad y = \pm q_i (u, v) 
\eeq
solves Equation~(\ref{eqn:canonical_sextic_rank9star}). Their image under $\pi_\mathcal{S}$ is the conic in $\mathbb{P}(u, v, w)$ given by $C_i \colon 0 = w^2 - p_i( u, v)$. All intersection multiplicities between $C_i$ and $S'$ must be even as $y^2 = q_i (u, v)^2$ is a perfect square for every parametrization of the conic $C_i$. Thus, $C_i$ is a conic tangent to the branch curve in every point of $C_i\cap S$, where $S$ is the branch curve of $\pi_{\mathcal{S}}$.
\end{proof}
\begin{remark}
The degree-one del Pezzo surface $\mathcal{Z}$ contains 240 (-1)curves, and their pull-backs via $\varphi$ on $\mathcal{X}$ are (-2)-curves.  In the proof above, we have seen that these curves come in pairs $A_i^\pm$ with $i=1, \dots, 120$ such that  the image of $A_i^+ + A_i^-$ by $\pi$ is a conic $C_i$ tangent to the branch curve. Using Equation~(\ref{eqn:map_rank9}) the action of $k_\mathcal{S} \colon y \mapsto -y$  corresponds to $R \mapsto -R$ on $\mathcal{Z}$, and $k_\mathcal{X}$ interchanges $A_i^\pm$ and fixes $C_i$. The involution $\jmath_\mathcal{S} \colon w \mapsto -w$ is the covering involution of the double cover $\varphi\colon \mathcal{S} \to \mathcal{Z}$.
\end{remark}
We have the following:
\begin{proposition}
\label{prop1}
The K3 surface $\mathcal{X}$ obtained as the minimal resolution of the general double sextic in Equation~(\ref{eqn:canonical_sextic_rank9star}) has $\mathrm{NS}(\mathcal{X})  \cong H(2) \oplus A_1^{\oplus 7}$. Conversely, every K3 surface $\mathcal{X}$ with $\mathrm{NS}(\mathcal{X})  \cong H(2) \oplus A_1^{\oplus 7}$ is isomorphic to such a K3 surface. 
\end{proposition}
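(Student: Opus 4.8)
The plan is to exploit the antisymplectic involution $\jmath_\mathcal{X}$, induced by $\jmath_\mathcal{S}\colon w \mapsto -w$, whose quotient is the degree-one del Pezzo surface $\mathcal{Z}$ and whose fixed locus is exactly the genus-two curve $B\colon y^2 = c_6(u,v)$ cut out by $w=0$. Here I would use that $B \subset \mathcal{Z}$ is smooth, so that $\varphi\colon \mathcal{X}\to\mathcal{Z}$ is a double cover of smooth surfaces with smooth branch locus and $\mathcal{X}$ is itself smooth (consistent with the smoothness of the sextic $S$ in Proposition~\ref{prop0}). The key computation is the invariant lattice $S := H^2(\mathcal{X},\mathbb{Z})^{\jmath_\mathcal{X}}$. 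Since $\jmath_\mathcal{X}$ is the deck transformation of $\varphi$ and the branch curve is smooth, pullback identifies $S$ with $\mathrm{NS}(\mathcal{Z})(2)$; and as $\mathcal{Z}$ is the blow-up of $\mathbb{P}^2$ at eight general points, $\mathrm{NS}(\mathcal{Z}) \cong \langle 1\rangle \oplus \langle -1\rangle^{\oplus 8}$, whence $S \cong \langle 2\rangle \oplus \langle -2\rangle^{\oplus 8}$.

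Next I would identify this lattice with $H(2)\oplus A_1^{\oplus 7}$. Both are even, hyperbolic, $2$-elementary lattices of rank $9$ and length $9$, and both have parity $\delta = 1$ (a generator of the discriminant form coming from the $\langle 2\rangle$ summand takes the value $\tfrac12\notin\mathbb{Z}/2\mathbb{Z}$). By the uniqueness theorem of Nikulin \cite{MR633160b}*{Thm.~4.3.2} quoted in the Introduction, a hyperbolic even $2$-elementary lattice admitting a primitive embedding into the K3 lattice is determined by $(\rho,\ell,\delta)$, so $\langle 2\rangle\oplus\langle -2\rangle^{\oplus 8}\cong H(2)\oplus A_1^{\oplus 7}$. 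The same invariants $(\rho,\ell,\delta)=(9,9,1)$ are recovered intrinsically from Nikulin's fixed-locus formulas $g=(22-\rho-\ell)/2$ and $k=(\rho-\ell)/2$ applied to the genus-two fixed curve with no rational components ($g=2$, $k=0$), which confirms that the pullback lattice is the entire invariant lattice. To finish the first assertion I would note that $S\subseteq \mathrm{NS}(\mathcal{X})$ is primitive, since an invariant lattice is always saturated and its classes are of type $(1,1)$, and that a parameter count—eight moduli for the eight points defining $\mathcal{Z}$ together with $\dim|-2K_\mathcal{Z}| = 3$ for the branch curve—gives an eleven-dimensional family, matching $20-9$. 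Hence for general $\mathcal{X}$ the Picard number is $9$ and $\mathrm{NS}(\mathcal{X}) = S \cong H(2)\oplus A_1^{\oplus 7}$.

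For the converse I would run the construction backwards. Given a K3 surface $\mathcal{X}$ with $\mathrm{NS}(\mathcal{X})\cong H(2)\oplus A_1^{\oplus 7}$, this lattice is $2$-elementary of type $(9,9,1)$, so Nikulin's theory of non-symplectic involutions provides an antisymplectic involution $\jmath$ with invariant lattice equal to $\mathrm{NS}(\mathcal{X})$ and, by the same fixed-locus formulas, fixed locus a single smooth genus-two curve $C$. The quotient $\mathcal{Y} = \mathcal{X}/\jmath$ is a smooth rational surface carrying a branch curve $\bar C\in |-2K_\mathcal{Y}|$; computing the genus of $\bar C$ forces $K_\mathcal{Y}^2 = 1$, and checking that $-K_\mathcal{Y}$ is ample shows that $\mathcal{Y}$ is a degree-one del Pezzo surface. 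Its anticanonical model embeds it into $\mathbb{P}(1,1,2,3)$ in the form~(\ref{eqn:WP}), and after normalizing the branch class to $W=0$ the double cover is precisely~(\ref{eqn:canonical_sextic_rank9star}); thus $\mathcal{X}$ is the minimal resolution of such a double sextic.

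The main obstacle is the converse, and within it the passage from the abstract involution to the explicit equation: one must verify that $\mathcal{Y}$ is a genuine (not merely weak) del Pezzo of degree one with base-point-free anticanonical system, so that it really is the surface $\mathcal{Z}$ of~(\ref{eqn:WP}), and that an \emph{arbitrary} (not just general) K3 with the prescribed lattice carries the required involution with the correct fixed locus. This rests on Nikulin's classification, which I would invoke rather than reprove; the remaining lattice identifications and the dimension count are routine.
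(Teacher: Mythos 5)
Your proposal is correct, but it takes a genuinely different route from the paper's proof of Proposition~\ref{prop1}. The paper's forward direction is a direct N\'eron--Severi computation: the pencil of lines through a node of the eight-nodal plane sextic pulls back to an elliptic fibration \emph{without} section, and the fiber class together with the eight exceptional classes is checked to span a lattice isometric to $H(2)\oplus A_1^{\oplus 7}$; the converse is not reproved but quoted from Roulleau \cite{Roulleau22}, together with the count that these double sextics form an $11$-dimensional family. You instead compute the $\jmath_\mathcal{X}$-invariant lattice as $\mathrm{NS}(\mathcal{Z})(2)\cong\langle 2\rangle\oplus\langle -2\rangle^{\oplus 8}$ and identify it with $H(2)\oplus A_1^{\oplus 7}$ via Nikulin's $(\rho,\ell,\delta)$-uniqueness, correctly using the fixed-locus data $(g,k)=(2,0)$ to rule out a proper overlattice of the pullback (an index-two overlattice would drop $\ell$ and force a rational component in the fixed locus); your converse in effect reproves Roulleau's theorem by Nikulin's theory: the involution acting as $+1$ on $\mathrm{NS}(\mathcal{X})$ and $-1$ on the transcendental lattice has a single genus-two fixed curve, and the quotient is a degree-one del Pezzo whose anticanonical model is~(\ref{eqn:WP}). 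Your route is self-contained modulo Nikulin's general results and explains structurally why this lattice appears; the paper's route is shorter, produces explicit divisor classes that are reused later, and outsources the converse to the literature.

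Three points need tightening, none fatal. First, in the forward direction the $11$-parameter count only yields generic Picard number $9$ once you know the family dominates the relevant moduli space; this follows from your converse (or from surjectivity of the period map for K3 surfaces carrying an involution of type $(9,9,1)$), so the logical order should put that step first --- the paper's ``$11$-dimensional family'' remark carries the same implicit point. Second, the ampleness of $-K_\mathcal{Y}$ is not something Nikulin's classification hands you directly, but it does follow from the hypothesis $\mathrm{NS}(\mathcal{X})\cong H(2)\oplus A_1^{\oplus 7}$: a $(-2)$-curve $R\subset\mathcal{Y}$ satisfies $R\cdot K_\mathcal{Y}=0$ by adjunction, hence is disjoint from the irreducible branch curve in $|-2K_\mathcal{Y}|$, so its preimage is an unramified double cover of $\mathbb{P}^1$ and splits into two disjoint $(-2)$-curves interchanged by $\jmath_\mathcal{X}$; since $\mathrm{NS}(\mathcal{X})$ equals the invariant lattice, every algebraic class is $\jmath_\mathcal{X}$-fixed, a contradiction, and Nakai--Moishezon then gives ampleness. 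Third, a small slip: on a degree-one del Pezzo surface $|-K_\mathcal{Y}|$ is never base point free (it is a pencil with one base point); what you actually need is that the anticanonical ring is generated in degrees at most three --- exactly the fact used in the paper's Lemma preceding Proposition~\ref{prop0} --- yielding the model in $\mathbb{P}(1,1,2,3)$, after which normalizing the branch member of $|-2K_\mathcal{Y}|$ to $W=0$ (possible since the fixed curve is irreducible of genus two, forcing $f_0\neq 0$ in the shift $W\mapsto f_0W+f_2(U,V)$) produces exactly Equation~(\ref{eqn:canonical_sextic_rank9star}).
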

\begin{proof}
We consider the pencil of lines through a node on the sextic $S$. Pulling back the class of the line via the minimal resolution of the double double sextic in Equation~(\ref{eqn:canonical_sextic_rank9star}), this defines an elliptic fibration without section on $\mathcal{X}$. One checks that pull back of the elliptic fiber and the eight classes $ \mathrm{E}_i$ form a lattice isometric to $H(2) \oplus A_1^{\oplus 7}$.  This proves that $\mathrm{NS}(\mathcal{X})  \cong H(2) \oplus A_1^{\oplus 7}$. Conversely, Roulleau proved in \cite{Roulleau22} that every $H(2) \oplus A_1^{\oplus 7}$-polarized K3 surface $\mathcal{X}$ can be obtained as  a double cover of $\mathbb{P}^2$ branched on a plane sextic with eight nodes. In particular, the double sextics of Equation~(\ref{eqn:canonical_sextic_rank9star}) form an 11-dimensional family.
\end{proof}
\par The $(-1)$-curves $ \mathrm{E}_i$ on $\mathcal{Z}$ pull-back via the double cover $\varphi$ to $(-2)$-curves on $\mathcal{X}$, denoted by $A_1, \dots, A_8$. The pull-back of $\mathcal{L}'$ is denoted by $\mathcal{L}$. We set
\beq
\mathpzc{D}_2 = \varphi^* (-K_\mathcal{Z}) = 3 \mathcal{L} - A_1 - \dots - A_8 \, .
\eeq
We have the following:
\begin{proposition}
\label{prop2}
$\mathpzc{D}_2 \in \mathrm{NS}(\mathcal{X})$ is  nef,  of square-two, and invariant under the action of $\imath_\mathcal{X}$. After a suitable choice of coordinates, for the linear system $\vert \mathpzc{D}_2 \vert$ the corresponding rational map is the projection onto $\mathbb{P}(u, v, w)$ in Equation~(\ref{eqn:canonical_sextic_rank9star}).
\end{proposition}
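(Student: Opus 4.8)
The plan is to leverage the defining identity $\mathpzc{D}_2=\varphi^*(-K_\mathcal{Z})$ throughout: the self-intersection and nefness will follow from general facts about the finite double cover $\varphi$ and the ample anticanonical class on the degree-one del Pezzo surface $\mathcal{Z}$, while the invariance and the description of $\vert \mathpzc{D}_2\vert$ will come from an eigenspace decomposition of $H^0(\mathpzc{D}_2)$ under the commuting involutions.

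\emph{Square-two and nefness.} Since the general branch curve $B$ is smooth, $\mathcal{S}=\mathcal{X}$ and $\varphi\colon\mathcal{X}\to\mathcal{Z}$ is a finite morphism of degree two between smooth surfaces. The fastest route to the self-intersection is the projection formula, $\mathpzc{D}_2^2=(\varphi^*(-K_\mathcal{Z}))^2=2\,(-K_\mathcal{Z})^2=2$, using $K_\mathcal{Z}^2=1$. Alternatively one checks directly from $\mathcal{L}=\varphi^*\mathcal{L}'$ and $A_i=\varphi^*E_i$ that $\mathcal{L}^2=2$, $A_i^2=-2$, and $\mathcal{L}\cdot A_i=A_i\cdot A_j=0$ for $i\neq j$, whence $(3\mathcal{L}-\sum_i A_i)^2=18-16=2$. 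For nefness I would use that $-K_\mathcal{Z}$ is ample on $\mathcal{Z}$ and that $\varphi$ is finite: by the projection formula $\mathpzc{D}_2\cdot C=(-K_\mathcal{Z})\cdot\varphi_*C>0$ for every irreducible curve $C$, since $\varphi$ contracts nothing; thus $\mathpzc{D}_2$ is in fact ample, in particular nef.

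\emph{Sections and identification of the linear system.} Riemann--Roch on the K3 surface gives $\chi(\mathpzc{D}_2)=2+\mathpzc{D}_2^2/2=3$, and since $\mathpzc{D}_2$ is nef and big, $h^1=h^2=0$, so $h^0(\mathpzc{D}_2)=3$. To pin down the three sections I would decompose $H^0(\mathpzc{D}_2)$ under the covering involution $\jmath_\mathcal{X}$ of $\varphi$. Using $\varphi_*\mathcal{O}_\mathcal{X}=\mathcal{O}_\mathcal{Z}\oplus\mathcal{O}_\mathcal{Z}(K_\mathcal{Z})$ (as $B\in\lvert-2K_\mathcal{Z}\rvert$), the invariant part is $H^0(\mathcal{Z},-K_\mathcal{Z})=\langle U,V\rangle$, pulling back to $\langle u,v\rangle$, while the anti-invariant part is $H^0(\mathcal{Z},\mathcal{O}_\mathcal{Z})=\mathbb{C}$, lifting to the single $\jmath_\mathcal{X}$-odd coordinate $w$. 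Hence $\vert \mathpzc{D}_2\vert$ is the net spanned by $u,v,w$, its associated map is $[u:v:w]$, and this is precisely the projection $\pi_\mathcal{S}$ onto $\mathbb{P}(u,v,w)$ of Equation~(\ref{eqn:canonical_sextic_rank9star_b}); after renaming the target coordinates this yields the stated map, and base points do not occur since $\pi_\mathcal{S}$ is already a morphism. In particular $\mathpzc{D}_2=\pi_\mathcal{S}^*\mathcal{O}_{\mathbb{P}^2}(1)$.

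\emph{Invariance under $\imath_\mathcal{X}$.} Being a pullback under $\varphi$, the class $\mathpzc{D}_2$ is automatically fixed by the covering involution $\jmath_\mathcal{X}$. The identification $\mathpzc{D}_2=\pi_\mathcal{S}^*\mathcal{O}_{\mathbb{P}^2}(1)$ from the previous step exhibits $\mathpzc{D}_2$ as a pullback under $\pi_\mathcal{S}$, whose covering involution is $k_\mathcal{X}\colon y\mapsto-y$, so $\mathpzc{D}_2$ is also $k_\mathcal{X}$-invariant; since $\imath_\mathcal{X}=\jmath_\mathcal{X}\circ k_\mathcal{X}$, invariance under $\imath_\mathcal{X}$ follows. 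The main obstacle I anticipate is the middle step: one must confirm that the single anti-invariant section is genuinely the coordinate $w$ (and not a spurious class), so that $\vert \mathpzc{D}_2\vert$ realizes exactly $\pi_\mathcal{S}$ rather than merely some degree-two map to $\mathbb{P}^2$. This requires keeping careful track of the two planes in play — the $\mathbb{P}^2_{[x:y:z]}$ blown up at $\mathscr{P}$ to form $\mathcal{Z}$, and the target $\mathbb{P}(u,v,w)$ of $\pi_\mathcal{S}$ — and of how $w=\sqrt{W}$ interpolates between them through the double cover $\varphi$.
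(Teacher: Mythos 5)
Your proof is correct, but it takes a genuinely different route from the paper's. The paper disposes of this proposition in two lines: invariance under $\imath_\mathcal{X}$ is checked ``by an explicit lattice computation'' (i.e., using the known action of the involutions on the generators $\mathcal{L}, A_1, \dots, A_8$ of $\mathrm{NS}(\mathcal{X})$), and the nefness, the square-two property, and the identification of the map of $\vert \mathpzc{D}_2 \vert$ are outsourced to \cite{Roulleau22}*{Sec.~9.5}. You instead give a self-contained geometric argument from the double-cover structure: the projection formula for the finite degree-two morphism $\varphi$ yields $\mathpzc{D}_2^2 = 2\,(-K_\mathcal{Z})^2 = 2$ and even ampleness (strictly more than the stated nefness), and the decomposition $\varphi_*\mathcal{O}_\mathcal{X} = \mathcal{O}_\mathcal{Z} \oplus \mathcal{O}_\mathcal{Z}(K_\mathcal{Z})$ gives $H^0(\mathpzc{D}_2) = H^0(-K_\mathcal{Z}) \oplus H^0(\mathcal{O}_\mathcal{Z}) = \langle u, v \rangle \oplus \langle w \rangle$, where the anti-invariant generator is the tautological square root $w$ of the branch section $W$ -- you correctly flag and resolve the one delicate point, namely that this section really is the coordinate $w$, so that $\vert \mathpzc{D}_2 \vert$ induces $[u:v:w]=\pi_\mathcal{S}$ and the ``suitable choice of coordinates'' becomes explicit (your Riemann--Roch step is then redundant, since $h^0 = 2+1 = 3$ follows from the push-forward). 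Your invariance argument -- $\mathpzc{D}_2$ is a pullback both under $\varphi$, whose deck transformation is $\jmath_\mathcal{X}$, and under $\pi_\mathcal{S}$, whose deck transformation is $k_\mathcal{X}$, hence is fixed by $\imath_\mathcal{X} = \jmath_\mathcal{X} \circ k_\mathcal{X}$ -- replaces the paper's lattice computation and is arguably cleaner, as it requires no knowledge of the action on a basis of $\mathrm{NS}(\mathcal{X})$. One point you should state explicitly rather than in passing: everything (finiteness of $\varphi$ and $\pi_\mathcal{S}$, and $\mathcal{X}=\mathcal{S}$) rests on smoothness of the branch loci for the general member, which is exactly what Proposition~\ref{prop0} supplies. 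What each approach buys: the paper's proof is economical and consistent with its overall strategy of citing \cite{Roulleau22}; yours is verifiable in place and proves strictly more (ampleness, base-point-freeness, invariance under all three involutions, and the identity $\mathpzc{D}_2 = \pi_\mathcal{S}^*\mathcal{O}_{\mathbb{P}^2}(1)$), though it leans on the smooth-branch hypothesis and so would need modification in the higher-rank cases where the sextic acquires singularities.
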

\begin{proof}
One checks invariance by an explicit lattice computation. The rest of the statement was proven in~\cite{Roulleau22}*{Sec.~9.5}.  
\end{proof}
\subsection{\texorpdfstring{$H \oplus A_1^{\oplus 7}$-polarized K3 surfaces}{Families polarized by H + 7 A1}}
We examine double sextics having eight ordinary double points associated with even eights. These double sextics were also considered by Barth \cite{MR1922094}. In order to distinguish these double sextic from those mentioned in Proposition~\ref{prop1}, we refer to the plane sextic curve and the double sextic surface as $S'$ and $\sigma'\colon \mathcal{S}'  \rightarrow \mathbb{P}^2$, respectively. Here, we consider the case where the sextic curve $S'$ splits as the union of a smooth conic and a smooth quartic curve meeting transversally. This is precisely the double cover introduced in Equation~(\ref{eqn:double-sextics_prime_general_intro}) for general polynomials $C, Q$.  We have the following:
\begin{lemma}
\label{lem:ell_fibration_9}
The pencil of lines through a common point of the conic and quartic curve induces a Jacobian elliptic fibration with singular fibers $7 I_2 + 10 I_1$ and trivial Mordell-Weil group.
\end{lemma}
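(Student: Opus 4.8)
The plan is to realize the fibration geometrically on a blow-up of $\mathbb{P}^2$, produce a section from the conic, read off the seven reducible fibers from the intersection points of $C$ and $Q$, and then fix all remaining data by Euler-number and Shioda--Tate bookkeeping. First I would fix one of the eight intersection points, say $s_1$, and blow up $\mathbb{P}^2$ at $s_1$ to resolve the base locus of the pencil of lines through it. This produces a ruled surface $\mathbb{F}_1 \to \mathbb{P}^1$ with exceptional curve $E$, where in the basis $\tilde H$ (total transform of a line), $E$ one has $\tilde H^2=1$, $\tilde H\cdot E=0$, $E^2=-1$, the ruling fiber is $\tilde H - E$, and $E$ is a section. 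The strict transform of the sextic has class $6\tilde H - 2E = 2(3\tilde H - E)$, hence is $2$-divisible, so the branched double cover of $\mathbb{F}_1$ along it (resolved over the remaining seven nodes) is $\mathcal{X}$, and composing with the ruling gives $\mathcal{X} \to \mathbb{P}^1$. Since $\tilde S' \cdot (\tilde H - E) = (6\tilde H - 2E)(\tilde H - E) = 4$, a generic fiber is a double cover of $\mathbb{P}^1$ branched at four points, i.e.\ a smooth genus-one curve.

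Next I would exhibit the section. The strict transform $\tilde C$ of the conic has class $2\tilde H - E$ and satisfies $\tilde C \cdot (\tilde H - E) = 1$, so $\tilde C$ is a section of the ruling. As $\tilde C$ lies in the branch divisor, its preimage in $\mathcal{X}$ is the ramification curve lying over it, mapping isomorphically to $\tilde C$, hence a section of $\mathcal{X} \to \mathbb{P}^1$. This makes the fibration Jacobian.

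For the reducible fibers, I would examine, for each of the remaining nodes $s_i$ with $2 \le i \le 8$, the line $\overline{s_1 s_i}$, which passes through the node $s_i$ where $C$ and $Q$ cross transversally. In the local model $z^2 = xy$ of the branched cover at $s_i$, restriction to the fiber $y = \lambda x$ with $\lambda \ne 0$ gives $z^2 = \lambda x^2$, two smooth branches with distinct tangent directions meeting at the $A_1$-point. After the minimal resolution these branches separate and each meets the exceptional curve $\mathrm{e}_i$ at one point; the fiber over $\overline{s_1 s_i}$ is thus $\mathrm{e}_i$ together with the strict transform of the (irreducible, rationally parametrized) double cover, meeting in two distinct points. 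This is a fiber of type $I_2$ (and not $III$, precisely because the two branches are transverse), giving seven $I_2$ fibers, one per extra node.

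Finally, the Shioda--Tate formula (\cite{MR1081832}) reads $\rho(\mathcal{X}) = 2 + \sum_v (m_v - 1) + \operatorname{rank}\mathrm{MW}$. With $\rho = 9$ (as $\mathrm{NS}(\mathcal{X}) \cong H \oplus A_1^{\oplus 7}$) and the seven $I_2$ fibers already contributing $\sum_v (m_v - 1) \ge 7$, non-negativity of the rank forces $\operatorname{rank}\mathrm{MW} = 0$ and rules out any further reducible fiber. Every remaining singular fiber is therefore irreducible, hence of type $I_1$ or $II$; general position of $C$ and $Q$ excludes cuspidal ($II$) fibers, so $24 = \sum_v e(F_v) = 7\cdot 2 + (\#I_1)$ yields exactly ten $I_1$ fibers. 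For the torsion, the trivial lattice $U \oplus A_1^{\oplus 7}$ embeds in $\mathrm{NS}(\mathcal{X}) \cong H \oplus A_1^{\oplus 7}$ with equal rank and discriminant $2^7$, so the inclusion has index one and $\mathrm{MW}$ is torsion-free; together with rank zero this gives the trivial Mordell--Weil group. The main obstacle I expect is the precise local identification of the fibers over the extra nodes as $I_2$ rather than $III$, together with excluding additive fibers among the remaining ten; both rest on the transversality of $C$ and $Q$ and on general position, and the Shioda--Tate rank bound is what closes the latter gap cleanly.
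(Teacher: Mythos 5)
Your route is genuinely different from the paper's: the paper gives no written proof of Lemma~\ref{lem:ell_fibration_9} (it is presented as the result of ``an explicit computation,'' and the analogous rank-10 statement, Lemma~\ref{lem:ell_fibration}, is verified there by writing down a Weierstrass model), whereas you argue geometrically on the blow-up of $\mathbb{P}^2$ at $s_1$. Your geometric core is correct and is in fact more informative than the paper's approach: the class $6\tilde H - 2E$ of the strict transform of the sextic is indeed $2$-divisible, the ramification curve over $\tilde C$ does give a section since $\tilde C\cdot(\tilde H - E)=1$, and your local analysis at the remaining nodes correctly identifies the seven fibers over the lines $\overline{s_1 s_i}$ as type $I_2$ rather than $III$ (the preimage of $\tilde\ell_i$ is an irreducible rational curve with a node at the $A_1$-point, whose two transverse branches separate on the resolution and each meet $\mathrm{e}_i$ once).

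There is, however, a genuine gap: circularity. You invoke $\mathrm{NS}(\mathcal{X})\cong H\oplus A_1^{\oplus 7}$ twice --- to get $\rho=9$ in the Shioda--Tate step (ruling out further reducible fibers and forcing Mordell--Weil rank zero), and in the discriminant comparison that kills torsion. But in the paper this lemma is precisely the instrument by which that lattice identification is established: immediately after the lemma the authors conclude that the minimal resolution of $\mathcal{S}'$ is polarized by $H\oplus A_1^{\oplus 7}$, and the equality $\mathrm{NS}(\mathcal{X})\cong H\oplus A_1^{\oplus 7}$ for general $C,Q$ is only asserted afterwards, in Proposition~\ref{prop:K_rank9}. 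So you may not assume the N\'eron--Severi lattice while proving the lemma. The gap is repairable without it: \emph{(i)} for a line $\ell\ni s_1$ not passing through a second node, the fiber is the double cover of $\ell$ branched on the residual divisor $p_C+q_1+q_2+q_3$, which is reducible only if this divisor is even; that forces $p_C=q_j$, i.e.\ $\ell$ meets $C\cap Q$ in a second point, which is excluded --- so the seven $I_2$'s are \emph{all} reducible fibers, unconditionally, with no appeal to $\rho$; \emph{(ii)} torsion is excluded by the height bound $0=\langle P,P\rangle = 4+2\,P\cdot O-\sum_v \mathrm{contr}_v(P)$, since seven $I_2$ fibers contribute at most $7\cdot\tfrac12=\tfrac72<4$; \emph{(iii)} Mordell--Weil rank zero for the \emph{general} member then follows from $\rho\le 9$, which should be justified by the parameter count (pairs $(C,Q)$ modulo $\mathrm{PGL}_3$ form an $11$-dimensional family, so the generic Picard number is at most $20-11=9$), not by citing the lattice this lemma is meant to produce. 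With those substitutions your argument becomes a complete and self-contained proof.
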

\begin{proof}
Generically, the smooth conic $C$ meets the smooth quartic $Q$ in eight distinct points, $p_i$, $ i = 1, \ldots, 8$. These points result in eight singularities on $\mathcal{S}'$ - all singularities are rational double points of type $A_1$. Denote by $\widetilde{E}_i$ the rational curves on $\mathcal{X}$ needed to resolve these singularities. Let $\widetilde{C}$ and $\widetilde{Q}$ the strict transforms of the conic $C$ and quartic $Q$, respectively. We also denote by $\mathcal{H}$, the hyperplane class corresponding to pre-images on $ \mathcal{X}$ of generic lines in $\mathbb{P}^2$. The Neron-Severi lattice of $\mathcal{X}$ is then generated by classes of: $\mathcal{H}$, $\widetilde{E}_i$, $i=1, \ldots, 8$, $\widetilde{C}$ and $\widetilde{Q}$.     
\par Consider then the pencil of lines in $\mathbb{P}^2$ passing through the point $p_1$. This pencil induces an elliptic fibration on the K3 surface $\mathcal{X}$, corresponding to the base-point free pencil $ \vert \mathcal{H} - \widetilde{E}_1 \vert $. The curve $\widetilde{C}$ is a section in this fibration, while $\widetilde{E}_1$ is a $2$-section and $\widetilde{Q}$ is a $3$-section. Moreover, there are seven singular fibers of Kodaira type $I_2$ and ten singular fibers of Kodaira type $I_1$. The $I_2$ fibers are formed as $\widetilde{E}_i + \widetilde{A}_i $, $i=2, \cdots, 8$, where $\widetilde{A}_i$ is the rational curve obtained as the strict pre-image of the line in $\mathbb{P}^2$ passing through $p_1$ and $p_i$. The section $\widetilde{C}$ meets $\widetilde{E}_i$ but not $\widetilde{A}_i$.  Results in \cites{MR1813537, MR4704757} show that the Mordell-Weil group of this fibration is trivial. 
\end{proof}
For the double sextic $\mathcal{S}'$,  we consider the associated quartic $\mathcal{K} \subset \mathbb{P}^3 = \mathbb{P}(u, v, w, y)$, introduced in Equation~(\ref{eqn:quartic_general_intro}).  In the situation above, an explicit computation yields the following:
\begin{lemma}
\label{lem:sing_of_K_rank9}
$\mathcal{K}$ in Equation~(\ref{eqn:quartic_general_intro})  has a rational double-point at $\mathrm{p}_1\colon \,  [u: v: w : y]= [0: 0: 0: 1]$ of type ${\bf A}_1$.
\end{lemma}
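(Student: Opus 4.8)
The plan is to pass to the affine chart $\{y \neq 0\} \cong \mathbb{A}^3$ with coordinates $(u,v,w)$, in which $\mathrm{p}_1 = [0:0:0:1]$ becomes the origin, and then apply the holomorphic Morse lemma. Dehomogenizing the defining equation of $\mathcal{K}$ in~(\ref{eqn:quartic_general_intro}) by setting $y = 1$ gives the local equation
\[
   F(u,v,w) := C(u,v,w) - Q(u,v,w) = 0 ,
\]
where $C$ and $Q$ are now the degree-$2$ and degree-$4$ homogeneous polynomials in $(u,v,w)$ (the polynomial $C$ is unchanged, since it never involved $y$). Because both $C$ and $Q$ are homogeneous of degree $\geq 2$, the Taylor expansion of $F$ at the origin has no constant and no linear part; hence $\mathrm{p}_1$ is a singular point of $\mathcal{K}$, and the lowest-order term of $F$ is exactly the quadratic form $C$.

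First I would compute the Hessian of $F$ at $\mathrm{p}_1$. Since $Q$ contributes only in degree $4$, the Hessian equals twice the symmetric matrix of the ternary quadratic form $C$. By Proposition~\ref{prop:Barth}, the branch sextic $S'$ splits as the transverse union of a \emph{smooth} conic $\{C = 0\}$ and a smooth quartic $\{Q=0\}$ in $\mathbb{P}^2 = \mathbb{P}(u,v,w)$. Smoothness of the conic means precisely that $C$ is a nondegenerate ternary quadratic form of rank $3$, so the Hessian of $F$ at the origin is nondegenerate. Thus $\mathrm{p}_1$ is a nondegenerate (Morse) critical point of $F$.

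The key step is the identification of the singularity type. By the holomorphic Morse lemma, a nondegenerate critical point admits analytic coordinates $(x_1, x_2, x_3)$ in which $F = x_1^2 + x_2^2 + x_3^2$, which is the standard local normal form of an ordinary double point, i.e., the rational double point of type $\mathbf{A}_1$ in the Du Val (ADE) classification. This simultaneously shows that $\mathrm{p}_1$ is a rational double point and that its type is $\mathbf{A}_1$. The only point genuinely requiring care is the full-rank claim for the leading form $C$: a degenerate conic would raise the corank of the Hessian and change the singularity to a higher $\mathbf{A}_n$ (or worse), so it is essential to invoke the smoothness of $\{C=0\}$ from Proposition~\ref{prop:Barth}. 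The vanishing of the linear part and the application of the Morse lemma are routine, so I expect no substantive obstacle beyond recording this nondegeneracy.
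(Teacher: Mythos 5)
Your proposal is correct and is essentially the paper's own argument: the paper simply asserts the lemma follows from ``an explicit computation,'' and your dehomogenization to the chart $y=1$, where the local equation $C(u,v,w)-Q(u,v,w)=0$ has vanishing linear part and nondegenerate quadratic part $C$ (smoothness of the conic from Proposition~\ref{prop:Barth}), is precisely that computation. Packaging the conclusion via the holomorphic Morse lemma to obtain the normal form $x_1^2+x_2^2+x_3^2=0$ of an $\mathbf{A}_1$ point is a clean and complete way to record it.
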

We have the following:
\begin{proposition}
\label{prop:K_rank9}
The K3 surface $\mathcal{X}$ obtained as the minimal resolution of the double sextic $\mathcal{S}'$ in Equation~(\ref{eqn:double-sextics_prime_general_intro}) or, equivalently, the quartic $\mathcal{K}$ in Equation~(\ref{eqn:quartic_general_intro}) for general polynomials $C, Q$ has $\mathrm{NS}(\mathcal{X})  \cong H \oplus A_1^{\oplus 7}$. Conversely, every K3 surface $\mathcal{X}$ with $\mathrm{NS}(\mathcal{X})  \cong H \oplus A_1^{\oplus 7}$ is isomorphic to such a K3 surface.
\end{proposition}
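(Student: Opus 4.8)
The plan is to derive this proposition as a synthesis of the preceding lemmas rather than through a fresh computation, since all the geometric and lattice-theoretic content has already been assembled. Throughout I use that birational K3 surfaces are isomorphic, so that the minimal resolutions of $\mathcal{S}'$ and of $\mathcal{K}$ coincide: the substitution $\tilde{y} = C(u,v,w)\,y$ recorded after Equation~(\ref{eqn:quartic_general_intro}) is a birational map between the two models, and Lemma~\ref{lem:sing_of_K_rank9} confirms that $\mathcal{K}$ carries only rational double points, so its resolution is again the K3 surface $\mathcal{X}$. It therefore suffices to compute $\mathrm{NS}(\mathcal{X})$ for one of the two models.

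For the forward implication I would start from general $C, Q$ of degrees $2$ and $4$. The sextic $S' = \{C \cdot Q = 0\}$ is then singular exactly at the eight transverse intersection points $s_1, \dots, s_8$ of the conic and the quartic, all ordinary nodes. By Proposition~\ref{prop:Barth}, since $S'$ splits as a smooth conic and a smooth quartic meeting transversally, the divisor $\mathrm{e}_1 + \cdots + \mathrm{e}_8$ is divisible by $2$, i.e.\ the eight nodes form an even eight. I would then invoke Lemma~\ref{lem:ell_fibration_9}: projecting from one of the eight nodes yields a Jacobian elliptic fibration on $\mathcal{X}$ with reducible fibers $7 I_2 + 10 I_1$ and trivial Mordell-Weil group. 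The zero section together with a general fiber spans a copy of $H$, and each of the seven $I_2$ fibers contributes one non-identity component, hence an $A_1$ summand. Since the Mordell-Weil group is trivial, the trivial lattice of the fibration is all of $\mathrm{NS}(\mathcal{X})$ by the Shioda-Tate formula; this gives $\rho(\mathcal{X}) = 2 + 7 = 9$ and $\mathrm{NS}(\mathcal{X}) \cong H \oplus A_1^{\oplus 7}$.

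For the converse I would cite Barth's theorem in \cite{MR1922094}, which asserts that every algebraic K3 surface with $\mathrm{NS}(\mathcal{X}) \cong H \oplus A_1^{\oplus 7}$ admits a birational model as a double sextic whose eight nodes form an even eight. Applying Proposition~\ref{prop:Barth} in the other direction, the even-eight condition forces the branch sextic to decompose as a smooth conic plus a smooth quartic meeting transversally at those eight nodes, which is precisely the shape of $\mathcal{S}'$ in Equation~(\ref{eqn:double-sextics_prime_general_intro}) for suitable $C$ and $Q$. Hence every such surface arises as claimed.

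The step I expect to carry the real weight is the identification of the trivial lattice with the full $\mathrm{NS}(\mathcal{X})$, which hinges entirely on the triviality of the Mordell-Weil group asserted in Lemma~\ref{lem:ell_fibration_9}. One must be sure that for generic $C, Q$ there are neither torsion sections nor an unaccounted free part contributing extra classes; granting this, the rank count $\rho = 9$ and the explicit frame lattice $H \oplus A_1^{\oplus 7}$ leave no room for a larger lattice, and the proposition follows.
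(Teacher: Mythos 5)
Your proposal is correct and follows essentially the same route as the paper: the forward direction rests on Lemma~\ref{lem:ell_fibration_9} (the fibration with fibers $7I_2+10I_1$ and trivial Mordell--Weil group, which via Shioda--Tate forces $\mathrm{NS}(\mathcal{X})$ to equal the trivial lattice $H\oplus A_1^{\oplus 7}$), the converse on Barth's theorem in \cite{MR1922094} combined with Proposition~\ref{prop:Barth}, and the passage between $\mathcal{S}'$ and $\mathcal{K}$ on the substitution $\tilde{y}=C\,y$ together with Lemma~\ref{lem:sing_of_K_rank9}. The only difference is cosmetic: you spell out the Shioda--Tate identification $\mathrm{NS}(\mathcal{X})/T\cong\mathrm{MW}$ that the paper leaves implicit in the sentence ``The lemma shows\dots polarized by the lattice $L$.''
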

\begin{proof}
Using the same notation as in the proof of Lemma~\ref{lem:ell_fibration_9} we have a lattice isomorphism:
\beqn
  \mathrm{NS}(\mathcal{X}) \simeq \langle \mathcal{H} - \widetilde{E}_1 , \widetilde{C} \rangle  \oplus \langle \widetilde{A}_2 \rangle \oplus \cdots \oplus \langle \widetilde{A}_8 \rangle  \ .
 \eeqn 
This shows that $\mathrm{NS}(\mathcal{X})$ is a lattice of type $H \oplus A_1^{\oplus 7} $.  It remains to show that every K3 surface $\mathcal{X}$ with $\mathrm{NS}(\mathcal{X})  \cong H \oplus A_1^{\oplus 7}$ is isomorphic to the minimal resolution of a double sextic $\mathcal{S}'$.  One has that, after applying an isometry of $\mathrm{NS}(\mathcal{X})$ (see the discussion in \cite{MR2355598}), the hyperbolic factor $H$ will be spanned by the fiber class $F'$ and section $\widetilde{C}$ of a Jacobian elliptic fibration on $\mathcal{X}$ that has seven $I_2$ fibers, the $A_1$-factors will correspond to seven rational curves $\widetilde{A}_i$, $i=2,\ldots, 8$. In this context, the divisor
\beqn 
 \mathcal{H}  =  7 F' + 2 \widetilde{C} - \widetilde{A}_2 - \cdots -\widetilde{ A}_8 
\eeqn 
satisfies $\mathcal{H}^2 =2$, $h^0(\mathcal{H})=3$ and corresponds to a base-point-free linear system that determines map $\mathcal{X} \rightarrow \mathbb{P}^2$. This map is  generically 2-to-1 and branches over a smooth conic and a smooth quartic on $\mathbb{P}^2$.  The conic is the image of the section $\widetilde{C}$. The quartic is the image of a 3-section. Moreover, the above map collapses exactly eight rational curves on $\mathcal{X}$: seven of these are the curves $\widetilde{A}_i$, the eighth curve is a 2-section.   
\end{proof}
The statement that every algebraic K3 surface $\mathcal{X}$ with $\mathrm{NS}(\mathcal{X})  \cong H \oplus A_1^{\oplus 7}$ has a birational model as double sextic associated with an even eight was also given in ~\cite{Roulleau22}*{Prop.~9.4} without proof. 
\section{K3 surfaces of Picard number 10}
\label{sec:rank10}
In Picard number 10, there are two possible lattice polarizations for K3 surfaces $\mathcal{X}$ with $\mathrm{Aut}(\mathcal{X}) \cong (\mathbb{Z}/2\mathbb{Z})^2$, namely $H(2)\oplus D_4^{\oplus 2}\cong H \oplus N$ and  $H \oplus A_1^{\oplus 8}$.
\subsection{The van Geemen-Sarti family}
In Section~\ref{sec:statment} we already introduced the van Geemen-Sarti family. Generically, the alternate fibration in Equation~(\ref{eqn:vgs_intro}) on $\mathcal{X}$ has 8 fibers of type $I_1$ over the zeroes of $a_4^2 - 4 b_8=0$ and 8 fibers of type $I_2$ over the zeroes of $b_8=0$ with Mordell-Weil group $\operatorname{MW}(\mathcal{X}, \pi_\mathcal{X})\cong  \mathbb{Z}/2\mathbb{Z}$. It was shown in \cite{MR2274533}  that for general members of the family we have
\beq
  \mathrm{NS}(\mathcal{X}) \  \cong \ H \oplus N , \qquad
  \mathrm{T}_\mathcal{X} \  \cong \ H^2 \oplus N \, .
\eeq
As previously indicated, the alternate fibration on $\mathcal{X}$ (and therefore the associated Geemen-Sarti involution $\imath_\mathcal{X}$) is unique. 
\par Let $\mathcal{X}$ be a general K3 surface with $\mathrm{NS}(\mathcal{X})  \cong H \oplus N$, equipped with the alternate fibration in Equation~(\ref{eqn:vgs_intro}). Since  $\mathrm{Aut}(\mathcal{X}) \cong (\mathbb{Z}/2)^2$, the van Geemen-Sarti involution $\imath_\mathcal{X}$ and the (fiberwise) hyperelliptic involution $\jmath_\mathcal{X}$ generate $\mathrm{Aut}(\mathcal{X})$. Using the alternate fibration we construct a double sextic $\mathcal{S}$ whose minimal resolution is isomorphic to $\mathcal{X}$. However, the equation for such a double sextic is not canonical:
\begin{proposition}
\label{prop:rank10}
For a factorization $b_8(u, v) = c_3(u, v) \cdot d_5(u, v)$ where $c_3$ and $d_5$ are homogeneous polynomials of degree 3 and 5, respectively, $\mathcal{X}$ is isomorphic to the minimal resolution of the double sextic given by
\beq
\label{eqn:double_sextic_rank10}
  \mathcal{S}\colon \quad y^2   = w \big( c_3(u, v)  \, w^2 + a_4(u, v) \, w + d_5(u, v)  \big) \,.
\eeq
In the general case, the branch curve  is the union of the line $w=0$ and an irreducible quintic, and the double sextic $\mathcal{S}$ has singularities of type ${\bf D}_4$ at $[u:v:w:y]=[0:0:1:0]$ and ${\bf A}_1$ at $[u_0:v_0:0:0]$ with $d_5(u_0, v_0)=0$.
\end{proposition}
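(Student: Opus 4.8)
The plan is to realize the double sextic (\ref{eqn:double_sextic_rank10}) as an explicit birational model of the van Geemen-Sarti Weierstrass fibration (\ref{eqn:vgs_intro}), and then to read the branch curve and its singularities directly off the defining equation. First I would produce the birational map. Starting from the alternate fibration $Y^2 Z = X(X^2 + a_4 X Z + b_8 Z^2)$ and using the factorization $b_8 = c_3 d_5$, I substitute $X = c_3(u,v)\,w$, $Y = c_3(u,v)\,y$, $Z = 1$. A one-line computation gives $c_3^2\, y^2 = c_3^2\, w\,(c_3 w^2 + a_4 w + d_5)$, hence equation (\ref{eqn:double_sextic_rank10}) after dividing by $c_3^2$. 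This substitution is weight-consistent: in the standard grading of the Weierstrass model the variables $X, Y$ carry weights $4$ and $6$ over $\mathbb{P}^1 = \mathbb{P}(u,v)$, matching $\deg(c_3 w) = 4$ and $\deg(c_3 y) = 6$ in $\mathbb{P}(1,1,1,3) = \mathbb{P}(u,v,w,y)$. The inverse $w = X/c_3$, $y = Y/c_3$ is regular wherever $c_3 \neq 0$, so the two surfaces are birational. Since $\mathcal{X}$ is a K3 surface and, by the result of Brieskorn quoted in Section~\ref{sec:statment}, the minimal resolution of a double sextic whose branch curve has only simple singularities is again a K3 surface, birational K3 surfaces being isomorphic will identify the minimal resolution of $\mathcal{S}$ with $\mathcal{X}$, provided the branch curve carries only simple singularities.

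Next I would analyze the branch sextic $S\colon w\,(c_3 w^2 + a_4 w + d_5) = 0$, which is manifestly the union of the line $\{w = 0\}$ and the quintic $Q\colon c_3 w^2 + a_4 w + d_5 = 0$. To see that $Q$ is generically irreducible, I view the projection from $\mathrm{p}_2 = [0:0:1]$, which realizes $Q$ as a $2:1$ cover of $\mathbb{P}^1 = \mathbb{P}(u,v)$ (the point having multiplicity $3$ on the quintic). This cover splits into two rational components exactly when the discriminant $a_4^2 - 4 c_3 d_5 = a_4^2 - 4 b_8$ is a square in $\mathbb{C}[u,v]$, which fails for general $a_4, b_8$ since the binary octic then has eight distinct roots; Riemann--Hurwitz gives $Q$ geometric genus $3$.

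Then I would locate the singular points of $S$. At $\mathrm{p}_1 = [0:0:1]$ all of $c_3, a_4, d_5$ vanish, and in the chart $w = 1$ the quintic reads $c_3(u,v) + a_4(u,v) + d_5(u,v)$, whose lowest-order term is the binary cubic $c_3$; for general $c_3$ this is an ordinary triple point with three distinct tangent lines, so the double cover acquires a $\mathbf{D}_4$ rational double point there. The line meets $Q$ only along $w = 0$, where $Q$ restricts to $d_5(u,v)$; the five zeros of the binary quintic are generically distinct, and since $\partial_w Q = a_4 \neq 0$ along $w=0$ the two branches cross transversally, so these are nodes of $S$ and hence yield $\mathbf{A}_1$ singularities of $\mathcal{S}$. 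A genericity check confirms that $Q$ has no further singular points and that the line is nowhere tangent to it, so $S$ has only these simple singularities of types $\mathbf{D}_4$ and $\mathbf{A}_1$; together with the first paragraph this identifies the minimal resolution of $\mathcal{S}$ with $\mathcal{X}$.

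The hard part will be this last step: verifying, for general van Geemen-Sarti parameters, that the triple point at $\mathrm{p}_1$ is \emph{ordinary} (the tangent cone $c_3$ reduced), that the intersections along $w = 0$ are transverse nodes, and that no further or worse singularities of $Q$ occur, so that the branch curve carries only $\mathbf{D}_4$- and $\mathbf{A}_1$-type simple singularities. These are finitely many open conditions on the coefficients of $a_4, c_3, d_5$, each of which is nonempty, and it is precisely here that the generality hypothesis on $\mathcal{X}$ enters; I would also record that the singular point $[0:0:0:1]$ of the ambient $\mathbb{P}(1,1,1,3)$ does not lie on $\mathcal{S}$, so that no spurious singularity arises from the weighted ambient space.
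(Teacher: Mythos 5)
Your proposal is correct and, at its computational core, coincides with the paper's own argument: your substitution $X = c_3 w$, $Y = c_3 y$, $Z = 1$ is precisely the affine form of the rational inverse $[X:Y:Z] = [\,c_3(u,v)\,w\,Z_0 : c_3(u,v)\,y\,Z_0 : Z_0\,]$ that the paper writes down for Proposition~\ref{prop:rank10}, with the same degree bookkeeping ($X$, $Y$ of weights $4$, $6$ over $\mathbb{P}(u,v)$). The differences lie in what each side chooses to spell out. The paper additionally proves that the model is well defined, i.e., that two factorizations $b_8 = c_3 d_5 = \tilde{c}_3 \tilde{d}_5$ yield birationally equivalent sextics; this is not required by the literal statement, which fixes one factorization, so your omission is harmless, though it matters for the paper's later use of~(\ref{eqn:double_sextic_rank10}). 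Conversely, the paper dismisses the entire branch-curve analysis with ``the rest of the statement is immediate,'' whereas you carry it out --- and your count is in fact the sharper and correct one: the line $w=0$ meets the quintic at the five simple zeros of $d_5$, producing five nodes, so $\mathcal{S}$ has one $\mathbf{D}_4$ point at $[0:0:1:0]$ and \emph{five} $\mathbf{A}_1$ points, not two singular points in total. The proposition's phrase ``two singularities of type $\mathbf{D}_4$ and $\mathbf{A}_1$'' can only be read as ``singularities of the two types $\mathbf{D}_4$ and $\mathbf{A}_1$,'' since your count is the only one compatible with $\rho_L = 10$: the resolution must contribute $4 + 5 = 9$ exceptional classes on top of the pullback of a line, and your Riemann--Hurwitz computation of the genus-$3$ quintic likewise matches Nikulin's fixed-locus data recalled in Remark~\ref{rem:branch_locus}; it would strengthen your write-up to state this reconciliation explicitly. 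One small imprecision: reducibility of the quintic is not \emph{exactly} equivalent to $a_4^2 - 4b_8$ being a square in $\mathbb{C}[u,v]$, since $Q$ could also acquire a component constant in $w$ (a line of the pencil through $[0:0:1]$) if $c_3$, $a_4$, $d_5$ shared a common factor; but for general parameters the octic $a_4^2 - 4 b_8$ has eight simple roots and $\gcd(c_3, a_4, d_5) = 1$, so both degenerations are non-generic and your conclusion stands.
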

\begin{proof}
The transformation
\beq
\big( u , v , w, y \big) = \big( c_3(U, V) U Z, c_3(U, V) V Z, X, c_3(U, V)^2 Y Z^2 \big)
\eeq
has the property that the right side transforms under rescaling
\beq
 (U, V, X, Y, Z) \mapsto (\lambda U, \lambda V, X, \lambda^{-2} Y,  \lambda^{-4} Z), \qquad
 (U, V, X, Y, Z) \mapsto (U, V, \lambda X, \lambda Y,  \lambda Z) \,,
\eeq 
for $\lambda \in \mathbb{C}^\times$, with weights $(0, 0, 0, 0)$ and $(1, 1, 1, 3)$, respectively. Thus, it induces a rational map from $\mathcal{X}$ in Equation~(\ref{eqn:vgs_intro}), considered as a double cover of $\mathbb{F}_4$, to $\mathcal{S}$ in Equation~(\ref{eqn:double_sextic_rank10}). For any $Z_0 \in \mathbb{C}^\times$ the transformation
\beq
 \big([U:V], [X: Y : Z] \big) = \big( [u:v], [c_3(u, v) w Z_0 : c_3(u, v) y Z_0: Z_0  ]\big)
\eeq
provides a rational inverse.
\par Next, we prove that $\mathcal{S}$ in Equation~(\ref{eqn:double_sextic_rank10}) is well defined. For any other factorization $b_8(u, v) = \tilde{c}_3(u, v) \cdot \tilde{d}_5(u, v)$ the birational coordinate change  $[\tilde{y}: \tilde{w}: \tilde{u}: \tilde{v}] = [  \tilde{c}_3(u, v) c_3(u, v)^2 y:  \ \tilde{c}_3(u, v) w : \ c_3(u, v) u: \ c_3(u, v) v]$ transforms the equation
\beq
\label{eqn:S_H+N}
  \tilde{y}^2   = \tilde{w} \big( \tilde{c}_3(\tilde{u}, \tilde{v})  \, \tilde{w}^2 + a_4(\tilde{u}, \tilde{v}) \, \tilde{w} + \tilde{d}_5(\tilde{u}, \tilde{v})  \big) 
\eeq
into Equation~(\ref{eqn:double_sextic_rank10}). The rest of the statement is immediate.
\end{proof}
\begin{figure}
  	\centering
	\scalebox{0.7}{%
    		\begin{tikzpicture}[rotate=90]
       		\draw (0,1.5) -- (3.5,0.5);
\draw (0,1.5) -- (2.5,0.5);
\draw (0,1.5) -- (1.5,0.5);
\draw (0,1.5) -- (0.5,0.5);
\draw (0,1.5) -- (-0.5,0.5);
\draw (0,1.5) -- (-1.5,0.5);
\draw (0,1.5) -- (-2.5,0.5);
\draw (0,1.5) -- (-3.5,0.5);

\draw [very thick] (3.5,0.5) -- (3.5,-0.5);
\draw [very thick] (2.5,0.5) -- (2.5,-0.5);
\draw [very thick] (1.5,0.5) -- (1.5,-0.5);
\draw [very thick] (0.5,0.5) -- (0.5,-0.5);
\draw [very thick] (-3.5,0.5) -- (-3.5,-0.5);
\draw [very thick] (-2.5,0.5) -- (-2.5,-0.5);
\draw [very thick] (-1.5,0.5) -- (-1.5,-0.5);
\draw [very thick] (-0.5,0.5) -- (-0.5,-0.5);

\draw (0,-1.5) -- (3.5,-0.5);
\draw (0,-1.5) -- (2.5,-0.5);
\draw (0,-1.5) -- (1.5,-0.5);
\draw (0,-1.5) -- (0.5,-0.5);
\draw (0,-1.5) -- (-0.5,-0.5);
\draw (0,-1.5) -- (-1.5,-0.5);
\draw (0,-1.5) -- (-2.5,-0.5);
\draw (0,-1.5) -- (-3.5,-0.5);

\draw (0,1.5) node {$\bullet$};
\draw (3.5,0.5) node  {$\bullet$};
\draw (2.5,0.5) node  {$\bullet$};
\draw (1.5,0.5) node  {$\bullet$};
\draw (0.5,0.5) node  {$\bullet$};
\draw (-0.5,0.5) node  {$\bullet$};
\draw (-1.5,0.5) node  {$\bullet$};
\draw (-2.5,0.5) node  {$\bullet$};
\draw (-3.5,0.5) node  {$\bullet$};

\draw (-0.3,1.8) node [above]{$A_{1}$};
\draw (3.2,0.3) node [left]{$A_{9}$};
\draw (2.2,0.2) node [left]{$A_{8}$};
\draw (1.2,0.2) node [left]{$A_{7}$};
\draw (0.2,0.2) node [left]{$A_{6}$};
\draw (-0.8,0.2) node [left]{$A_{5}$};
\draw (-1.8,0.2) node [left]{$A_{4}$};
\draw (-2.8,0.2) node [left]{$A_{3}$};
\draw (-3.8,0.2) node [left]{$A_{2}$};

\draw (0,-1.5) node {$\bullet$};
\draw (3.5,-0.5) node  {$\bullet$};
\draw (2.5,-0.5) node  {$\bullet$};
\draw (1.5,-0.5) node  {$\bullet$};
\draw (0.5,-0.5) node  {$\bullet$};
\draw (-0.5,-0.5) node  {$\bullet$};
\draw (-1.5,-0.5) node  {$\bullet$};
\draw (-2.5,-0.5) node  {$\bullet$};
\draw (-3.5,-0.5) node  {$\bullet$};

\draw (0.3,-1.9) node [below]{$A_{18}$};
\draw (3.2,-0.7) node [left]{$A_{17}$};
\draw (2.2,-0.7) node [left]{$A_{16}$};
\draw (1.2,-0.7) node [left]{$A_{15}$};
\draw (0.2,-0.7) node [left]{$A_{14}$};
\draw (-0.8,-0.7) node [left]{$A_{13}$};
\draw (-1.8,-0.7) node [left]{$A_{12}$};
\draw (-2.8,-0.7) node [left]{$A_{11}$};
\draw (-3.8,-0.8) node [left]{$A_{10}$};

    		\end{tikzpicture}}
\caption{Dual graph for $\mathrm{NS}(\mathcal{X}) = H \oplus N$}
\label{fig:pic10}
\end{figure}
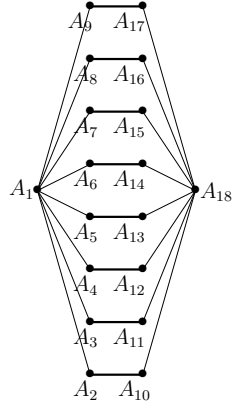
The lattice $L=H \oplus N$ and the configuration of its finite set of $(-2)$-curves was analyzed in detail in~\cite{Roulleau22}*{Sec.~10.6}.  We will use the same notation $A_1, \dots, A_{18}$ to denote the (-2)-curves in the dual graph of $\mathcal{X}$ as shown in Figure~\ref{fig:pic10}. Note that for simplicity the graph does not show \emph{all} rational curves on a general K3 surface with $\mathrm{NS}(\mathcal{X})  \cong H \oplus N$.  We have the following:
\begin{proposition}
\label{prop:symmetry_rank10a}
$\mathpzc{D}_2 = 2A_{1}+2A_{2}+A_{3}+A_{4}+A_{10} \in \mathrm{NS}(\mathcal{X})$ is  nef,   base-point free, of square-two, and invariant under the action of $\imath_\mathcal{X}$. After a suitable choice of coordinates, for the linear system $\vert \mathpzc{D}_2 \vert$ the corresponding rational map is the projection onto $\mathbb{P}(u, v, w)$ in Equation~(\ref{eqn:S_H+N}).
\end{proposition}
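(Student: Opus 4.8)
The plan is to follow the template of Proposition~\ref{prop2}: reduce every assertion to an explicit computation inside the lattice $\mathrm{NS}(\mathcal{X}) \cong H \oplus N$, using the basis of $(-2)$-curves $A_1, \dots, A_{18}$ recorded in Figure~\ref{fig:pic10}, and then transport the geometric content along the birational identification furnished by Proposition~\ref{prop:rank10}. The intersection numbers are read off the dual graph: $A_i^2 = -2$, the apex $A_1$ meets each $A_j$ transversally for $2 \le j \le 9$, the apex $A_{18}$ meets each $A_j$ for $10 \le j \le 17$, and each thick edge records an intersection $A_j \cdot A_{j+8} = 2$ for $2 \le j \le 9$ (as befits the two components of an $I_2$-fiber). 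With these data a direct expansion of $\mathpzc{D}_2 = 2A_1 + 2A_2 + A_3 + A_4 + A_{10}$ gives $\mathpzc{D}_2^2 = 2$. For nef-ness I would use that $H \oplus N$ is one of Nikulin's lattices with finite automorphism group, so the set of $(-2)$-curves is finite and was enumerated in~\cite{Roulleau22}; since $\mathpzc{D}_2$ lies in the positive cone, it suffices to verify $\mathpzc{D}_2 \cdot C \ge 0$ for each such curve. For the depicted generators one finds $\mathpzc{D}_2 \cdot A_i = 0$ for $i \in \{1,2,3,4\}$ and $\mathpzc{D}_2 \cdot A_i \ge 0$ for the remaining $i$, and the same bound against the full finite list gives nef.

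Next I would pin down the linear system. Riemann--Roch on the K3 surface yields $\chi(\mathpzc{D}_2) = 2 + \tfrac12 \mathpzc{D}_2^2 = 3$, and since $\mathpzc{D}_2$ is nef and big the relevant Kawamata--Viehweg-type vanishing gives $h^1(\mathpzc{D}_2) = 0$ together with $h^2(\mathpzc{D}_2) = h^0(-\mathpzc{D}_2) = 0$, so that $h^0(\mathpzc{D}_2) = 3$. To establish base-point-freeness I would invoke Saint-Donat's description of square-two linear systems on K3 surfaces: a nef divisor with self-intersection $2$ fails to be base-point free only in the exceptional case $\mathpzc{D}_2 \equiv 2E + \Gamma$ with $E^2 = 0$, $\Gamma^2 = -2$ and $E \cdot \Gamma = 1$, which is ruled out by a short computation in $H \oplus N$. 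Equivalently---and this is the route I prefer---I would identify $\mathpzc{D}_2$ with the pullback $\pi_\mathcal{S}^\ast \mathcal{O}_{\mathbb{P}^2}(1)$ of the hyperplane class along the double-cover morphism of Proposition~\ref{prop:rank10}; since $\pi_\mathcal{S}$ is a genuine morphism, its associated square-two system is automatically base-point free, and the equality $h^0(\mathpzc{D}_2) = 3$ shows $\vert \mathpzc{D}_2 \vert = \pi_\mathcal{S}^\ast \vert \mathcal{O}_{\mathbb{P}^2}(1) \vert$, so that $\phi_{\vert \mathpzc{D}_2 \vert}$ is exactly the projection onto $\mathbb{P}(u,v,w)$ in Equation~(\ref{eqn:S_H+N}), up to the projective change of coordinates absorbing the residual automorphisms.

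It remains to verify invariance under $\imath_\mathcal{X}$. Recall that $\imath_\mathcal{X}$ is the van Geemen--Sarti involution of the alternate fibration~(\ref{eqn:vgs_intro}), namely fiberwise translation by the $2$-torsion section; it is symplectic, hence acts on $\mathrm{NS}(\mathcal{X})$ as an isometry fixing an ample class and permuting the $(-2)$-curves, with coinvariant lattice isometric to $E_8(2)$ of rank $8$ and invariant part of rank $2$. I would compute the induced isometry $\imath_\mathcal{X}^\ast$ on the basis $A_1, \dots, A_{18}$---the involution exchanges the two components of each $I_2$-fiber of the alternate fibration and thereby acts on the configuration of Figure~\ref{fig:pic10}---and then check directly that $\imath_\mathcal{X}^\ast(\mathpzc{D}_2) = \mathpzc{D}_2$, i.e.\ that $\mathpzc{D}_2$ lies in the rank-two invariant sublattice. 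As a cross-check I would argue geometrically: $\mathpzc{D}_2 = \pi_\mathcal{S}^\ast \mathcal{O}_{\mathbb{P}^2}(1)$ is manifestly invariant under the deck involution $k_\mathcal{X}$, and since the three commuting involutions satisfy $\imath_\mathcal{X} = \jmath_\mathcal{X} \circ k_\mathcal{X}$, the required $\imath_\mathcal{X}$-invariance is equivalent to $\jmath_\mathcal{X}$-invariance, which can be read off from the explicit form of the anti-symplectic involution on Equation~(\ref{eqn:double_sextic_rank10}).

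The step I expect to be the main obstacle is the explicit determination of $\imath_\mathcal{X}^\ast$ on the curve configuration. The depicted curves do not form a set that is obviously stabilized by the involution in a symmetric way---the asymmetric coefficients of $A_2$ and $A_{10}$ in $\mathpzc{D}_2$ already show that $\imath_\mathcal{X}$ cannot simply interchange each $A_j$ with $A_{j+8}$---so the action must send some of the $A_i$ to $(-2)$-classes that are linear combinations of the generators, or to curves not drawn in Figure~\ref{fig:pic10}. Equivalently, writing $\imath_\mathcal{S}$ as an explicit birational self-map of the reducible sextic $y^2 = w\,(c_3 w^2 + a_4 w + d_5)$ and tracing its action on divisor classes is the genuinely computational part; once this is in hand, the invariance and the identification of $\phi_{\vert\mathpzc{D}_2\vert}$ with the coordinate projection follow immediately.
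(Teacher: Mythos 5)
Your main line coincides with the paper's proof, which is terse: invariance under $\imath_\mathcal{X}$ is checked by exactly the explicit lattice computation you propose, and everything else (nef, base-point free, square two, and the identification of the map of $\vert \mathpzc{D}_2 \vert$ with the projection) is delegated to \cite{Roulleau22}*{Sec.~10.6}. Your fillings-in of that citation are sound: reading the intersection data off Figure~\ref{fig:pic10} with thick edges meaning $A_j \cdot A_{j+8} = 2$ does give $\mathpzc{D}_2^2 = 2$ and $\mathpzc{D}_2 \cdot A_i = 0$ for $i \in \{1,2,3,4\}$, the Riemann--Roch/Saint-Donat route to $h^0(\mathpzc{D}_2)=3$ and base-point freeness is standard, and your observation that $\imath_\mathcal{X}$ cannot simply swap $A_j \leftrightarrow A_{j+8}$ is correct --- under that naive swap one would get $\imath_\mathcal{X}^*(\mathpzc{D}_2) \cdot A_1 = 1 \neq \mathpzc{D}_2 \cdot A_1 = 0$, so the actual action on classes is indeed the genuinely computational point, just as you anticipated.

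One caveat: your geometric cross-check is miscalibrated and would fail as written. For the rank-$10$ model~(\ref{eqn:double_sextic_rank10}) the deck transformation $y \mapsto -y$ of $\pi_\mathcal{S}$ corresponds, under the substitution $y = c_3(U,V)^2 Y Z^2$ in the proof of Proposition~\ref{prop:rank10}, to $Y \mapsto -Y$, i.e.\ to the fiberwise hyperelliptic involution $\jmath_\mathcal{X}$ of the alternate fibration --- not to $k_\mathcal{X}$ as in the bi-quadratic models. Hence manifest deck-invariance of $\pi_\mathcal{S}^* \mathcal{O}_{\mathbb{P}^2}(1)$ reduces $\imath_\mathcal{X}$-invariance to $k_\mathcal{X}$-invariance, and $k_\mathcal{X}$ (equivalently $\imath_\mathcal{X}$) acts on this model through translation by the $2$-torsion section, $X \mapsto b_8/X$, which descends to a \emph{nonlinear} Cremona involution of $\mathbb{P}(u,v,w)$; nothing can be ``read off'' there. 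This is in contrast with the models of Proposition~\ref{prop:double_surface_S_rank11ff} for $\rho_L \geq 11$, where $\imath_\mathcal{S} \colon [u:v:w:y] \mapsto [u:v:-w:-y]$ is linear and invariance of the hyperplane pullback is evident; the special, non-canonical shape of the $H \oplus N$ sextic is precisely why that shortcut is unavailable here. So the cross-check collapses back onto the lattice computation --- which is both your primary route and the paper's, and with it the proposal stands.
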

\begin{proof}
One checks invariance by an explicit lattice computation. The rest of the statement was shown in~\cite{Roulleau22}*{Sec.~10.6}.
\end{proof}
\par A general K3 surface $\mathcal{X}$ with $\mathrm{NS}(\mathcal{X})  \cong H \oplus N$ can also be obtained as the minimal resolution of a double cover of $\mathbb{F}_0 = \mathbb{P}(u, v) \times \mathbb{P}(\tilde{X}, \tilde{Z})$. Given a factorization $b_8(u, v) = \tilde{b}_4(u, v) \cdot \tilde{b}'_4(u, v)$ into homogeneous polynomials of degree four, $\mathcal{X}$ is obtained as the minimal resolution of a double quadric surface, defined by the equation
\beq
\label{eqn:vg_tilde}
  \widetilde{\mathcal{X}}\colon \quad \tilde{Y}^2  = \tilde{X}\tilde{Z} \big( \tilde{b}_4(u, v) \tilde{X}^2 + a_4(u, v)  \tilde{X} \tilde{Z} + \tilde{b}'_4(u, v)  \tilde{Z}^2  \big) \,.
\eeq
The surface is branched on a divisor class of bidegree $(4,4)$. The class decomposes into the union of divisors of bidegree $(0,1) + (0,1) + (4,2)$. These divisors are two $(-2)$-curves and the class of a curve of bidegree $(4,2)$. The former is of geometric genus 3 and given by the equation $\tilde{b}_4(u, v)  \tilde{X}^2 + a_4(u, v)  \tilde{X} \tilde{Z} + \tilde{b}'_4(u, v)  \tilde{Z}^2=0$. The latter two are the classes of the section of the alternate fibration, given by $\tilde{Z}=0$, and the 2-torsion section, given by $\tilde{X}=0$. The divisors form the invariant locus of the antisymplectic involution $\jmath_{\widetilde{\mathcal{X}}}\colon \tilde{Y} \mapsto -\tilde{Y}$, which is consistent with Nikulin's theorem \cite{MR633160b}*{Thm.~ 4.2.2}; see Remark~\ref{rem:branch_locus}.
\subsection{\texorpdfstring{$H \oplus A_1^{\oplus 8}$-polarized K3 surfaces}{Families polarized by H + 8 A1}}
The lattice $L=H \oplus A_1^{\oplus 8}$ and the configuration of its finite set of $(-2)$-curves was analyzed in detail in~\cite{Roulleau22}*{Sec.~10.8}.  We will use the notation $\mathrm{f}_1, \mathrm{f}_2, \mathrm{e}_1, \dots, \mathrm{e}_8$ to denote the canonical basis of $L$.  
\par We start by considering the double sextic $\mathcal{S}'$ in Equation~(\ref{eqn:double-sextics_prime_general_intro}),  assuming that the quartic $Q$ has a single node. We have the following analogue of Lemma~\ref{lem:ell_fibration_9}:
\begin{lemma}
\label{lem:ell_fibration}
If the sextic curve $S'$ splits as the union of a smooth conic and a uninodal quartic curve meeting transversally, then the pencil of lines through a common point of the conic and quartic curve induces a Jacobian elliptic fibration with singular fibers $8 I_2 + 8 I_1$ and trivial Mordell-Weil group. 
\end{lemma}
\begin{proof}
The proof is analogous to the Proof of Lemma~\ref{lem:ell_fibration_9}: the conic and the 1-nodal quartic have 8 distinct intersection points. It is then clear that the inverse image in the K3 of the pencil of lines through one intersection point gives an elliptic fibration, with the  section given by the preimage of the conic. Moreover, there are $I_2$-fibers over the seven lines that contain another point of intersection as well as over the line that passes through the node. 
\end{proof}
\begin{remark}
\label{rem:embedding}
On a K3 surface $\mathcal{X}$, a Jacobian elliptic fibration with $8 I_2 + 8 I_1$ and trivial Mordell-Weil group identifies the canonical basis of $L$, denoted by $\mathrm{f}_1$, $\mathrm{f}_2$, $\mathrm{e}_1, \dots, \mathrm{e}_8$, with classes in $\mathrm{NS}(\mathcal{X})$. We set
\beq
 F' = \mathrm{f}_1 , \quad  A_0 = -\mathrm{f}_1 + \mathrm{f}_2, \quad  A_i = \mathrm{f}_1 - \mathrm{e}_i \; \text{for $i=1, \dots, 8$,}
\eeq
and let $F'$ be a fiber class,  $A_0$ the class of a section with $F' \cdot A_0=1$, and $A_i + (F' - A_i)$ the reducible fibers 
for $i=1, \dots, 8$.  The section intersects the reducible fibers according to $A_0 \cdot A_i=1$ and $A_0 \cdot (F' - A_i)=0$ for $i=1, \dots, 8$. 
$\mathrm{NS}(\mathcal{X})$ is generated by $\langle F' + A_0, F', F' - A_1, \dots, F' - A_8\rangle$.
\end{remark}
\par Analogous to the Proposition~\ref{prop:K_rank9}, every algebraic K3 surface $\mathcal{X}$ with $\mathrm{NS}(\mathcal{X})  \cong H \oplus A_1^{\oplus 8}$ has a birational model as double sextic $\mathcal{S}'$ in Equation~(\ref{eqn:double-sextics_prime_general_intro}) where $S'$ splits as the union of a smooth conic and a uninodal quartic curve meeting transversally.  We have the following:

\begin{proposition} 
\label{prop:double_surface_Sp_10}
Let $\mathcal{X}$ be a general K3 surface with $\mathrm{NS}(\mathcal{X})  \cong H \oplus A_1^{\oplus 8}$. Then $\mathcal{X}$ is isomorphic to the minimal resolution of the double sextic $\mathcal{S}'$ such that the branch curve $S'$ splits as the union of a smooth conic and a uninodal quartic curve meeting transversally.
\end{proposition}
\begin{proof}
The arguments are similar to the ones in the proof of Proposition~\ref{prop:K_rank9}. The node in the quartic curve accounts for the additional $A_1$ factor in the Neron-Severi lattice $\mathrm{NS}(\mathcal{X})$.  
\end{proof}
\par In the situation of Proposition~\ref{prop:double_surface_Sp_10}, we move the node to $\mathrm{n} = [0: 0: 1]$ in $\mathbb{P}(u, v, w)$. Since the conic is assumed to be smooth, we can bring it into the form $h_0 w^2 + k_1(u, v) w + j_2(u, v)$  by coordinate shifts which keep the position of $\mathrm{n}$ fixed.  One obtains for the conic $C = h_0 w^2+ v w +  j_0  u^2$. Moreover, the conic contains $\mathrm{n}$ if and only if $h_0 = 0$.  For $h_0 j_0 \not = 0$ the conic can further be brought into the form $0 = w^2 - u v$  while keeping the position of the node $\mathrm{n}$ fixed.  Thus, we are led to consider the double sextic $\mathcal{S}'$ in Equation~(\ref{eqn:double-sextics_prime_general_intro}) with
\beq
\label{eqn:double_sextic10_polys}
  C = w^2 - uv  \, , \qquad  Q= c_2(u, v) \, w^2 + e_3(u, v) \, w + d_4(u, v) \, ,
\eeq
where $c_2, e_3, d_4$ are homogeneous polynomials of degree 2, 3, and 4, respectively. In this way, we obtain  the double sextic $\mathcal{S}'$, given by
\beq
\label{eqn:canonical_sextic_prime_rank10star}
  \mathcal{S}'\colon \quad \tilde{y}^2   = \big(w^2 - uv\big) \big( c_2(u, v) \, w^2 + e_3(u, v) \, w + d_4(u, v) \big) \, ,
\eeq
where the branch curve $S'$ splits as the union of a smooth conic and a uninodal quartic curve. We also consider the associated quartic $\mathcal{K}$ in Equation~(\ref{eqn:quartic_general_intro}) using the polynomials in~(\ref{eqn:double_sextic10_polys}), this is, the quartic given by 
\beq
\label{eqn:canonical_quartic_rank10star}
  \mathcal{K}\colon \quad \big(w^2 - uv\big)  y ^2 = c_2(u, v) \, w^2 + e_3(u, v) \, w + d_4(u, v) \,.
\eeq
One has the following:
\begin{lemma}
\label{lem:sings_K_rank10}
$\mathcal{K}$ has two rational double-point singularities of type $\mathbf{A}_1$ at
\beq
\label{eqn:singular_points_10}
   \mathrm{p}_1\colon \  [u: v: w : y]= [0: 0: 0: 1] \,, \qquad   \mathrm{p}_2\colon \  [u: v: w : y]= [0 : 0: 1: 0] \,.
\eeq  
\end{lemma}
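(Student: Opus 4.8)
The plan is to analyze the quartic
\[
\mathcal{K}\colon \quad \big(w^2 - uv\big)\, y^2 = c_2(u, v)\, w^2 + e_3(u, v)\, w + d_4(u, v)
\]
by localizing at the two candidate points and reading off the singularity type from the leading (lowest-order) part of a local defining equation. Since the question of whether these are the \emph{only} singularities is not asserted (the statement only claims that $\mathcal{K}$ \emph{has} these two $\mathbf{A}_1$ points), the core task is purely local: pass to affine charts centered at $\mathrm{p}_1$ and $\mathrm{p}_2$, homogenize-dehomogenize appropriately, and compute the Hessian of the local equation at the origin. An $\mathbf{A}_1$ singularity (ordinary double point) is exactly a nondegenerate quadratic singularity, i.e.\ the quadratic part of the local equation is a rank-three nondegenerate form in the three local coordinates on the hypersurface; so the whole proof reduces to exhibiting that rank-three quadratic part at each point.

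First I would treat $\mathrm{p}_1 = [0:0:0:1]$ using the affine chart $y = 1$, with affine coordinates $(u, v, w)$. The equation becomes $w^2 - uv = c_2(u,v)\,w^2 + e_3(u,v)\,w + d_4(u,v)$, i.e.\ $F_1 = (1 - c_2)w^2 - uv - e_3 w - d_4 = 0$. At the origin $c_2, e_3, d_4$ all vanish to order $\ge 2$ in $(u,v)$ (they are homogeneous of degrees $2,3,4$), while the terms $w^2$ and $-uv$ are genuine degree-two monomials. Hence the quadratic part of $F_1$ at the origin is $w^2 - uv + (\text{correction from } c_2 w^2)$; the $c_2 w^2$ piece is degree $\ge 4$, so the quadratic part is exactly $w^2 - uv$, which has rank three. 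This shows $\mathrm{p}_1$ is an $\mathbf{A}_1$ point. This is the same computation already invoked in Lemma~\ref{lem:sing_of_K_rank9}, now with the conic $C = w^2 - uv$ nonsingular at the origin rather than the degenerate conic of the rank-9 case.

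Next I would treat $\mathrm{p}_2 = [0:0:1:0]$ in the chart $w = 1$, affine coordinates $(u, v, y)$. Here $\mathcal{K}$ reads $(1 - uv)\,y^2 = c_2(u,v) + e_3(u,v) + d_4(u,v)$, so $F_2 = (1 - uv)y^2 - c_2(u,v) - e_3(u,v) - d_4(u,v) = 0$, where now $c_2, e_3, d_4$ are the dehomogenizations (polynomials of degrees $2,3,4$ in $u,v$). At the origin the $y^2$ term contributes a rank-one quadratic piece in the $y$-direction, while $-c_2(u,v)$ contributes a quadratic form in $(u,v)$; the $uv\,y^2$, $e_3$, $d_4$ terms are all of order $\ge 3$. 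Thus the quadratic part of $F_2$ is $y^2 - c_2(u,v)$, and this has rank three precisely when $c_2$ is a nondegenerate binary quadratic, which holds for general $c_2$. This again gives an $\mathbf{A}_1$ singularity.

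The main obstacle — really the only subtlety — is the genericity hypothesis at $\mathrm{p}_2$: the rank-three condition there requires the binary quadratic $c_2(u,v)$ to be nonzero and of full rank (discriminant $\neq 0$), which must be justified as holding for the general member of the family rather than for a special choice of $(c_2, e_3, d_4)$. I would dispatch this by noting that $c_2$ is an arbitrary general degree-two form, so $\det\!\big(\tfrac{\partial^2 c_2}{\partial(u,v)^2}\big)\neq 0$ is an open dense condition, consistent with the hypothesis that $\mathcal{X}$ is \emph{general} in the moduli space. With the two Hessian computations in place and the genericity of $c_2$ recorded, the lemma follows; I would close by remarking that the matching of these two $\mathbf{A}_1$ points with the dual-graph data of $H \oplus A_1^{\oplus 8}$ is exactly what feeds into Theorem~\ref{thm2_intro}(2) for the case $\rho_L = 10$.
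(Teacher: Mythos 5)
Your proposal is correct and coincides with what the paper does: the lemma is justified there only as ``an explicit computation,'' and that computation is precisely your chart-by-chart analysis showing the quadratic part of the local equation is $w^2-uv$ at $\mathrm{p}_1$ and $y^2-c_2(u,v)$ at $\mathrm{p}_2$, each of rank three for general $c_2$. Your handling of the genericity of $c_2$ is also the right reading, since a degenerate $c_2$ would worsen the singularity at $\mathrm{p}_2$ and raise the Picard number, consistent with Table~\ref{tab:sings_on_K_and_S}.
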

We also consider the double sextic $\mathcal{S}$ given by
\beq
\label{eqn:canonical_sextic_rank10star}
  \mathcal{S}\colon \quad y^2   = uv w^4 -  a_4(u, v)  w^2 +  b''_6(u, v) \,,
\eeq
where $a_4  =    u v c_2 - d_4$ and $b''_6 = e_3^2/4 - c_2 d_4$. One easily checks the following:
\begin{lemma}
\label{lem:sings_on_S_rank10}
$\mathcal{S}$  has a  rational double-point singularity of type $\mathbf{A}_1$ at  
\beqn
 \mathrm{p}\colon \  [u: v: w : y]= [0: 0: 1:0] \in \mathbb{P}(1,1,1,3).
 \eeqn
\end{lemma}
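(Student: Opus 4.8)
# Proof Proposal for Lemma~\ref{lem:sings_on_S_rank10}

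The plan is to verify directly that $\mathcal{S}$ in Equation~(\ref{eqn:canonical_sextic_rank10star}) has a single rational double point of type $\mathbf{A}_1$ at the stated point, by passing to a local affine chart and examining the lowest-order terms of the defining equation. First I would set up the affine chart of the weighted projective space $\mathbb{P}(1,1,1,3)$ adapted to the point $\mathrm{p} = [0:0:1:0]$, namely the chart where $w \neq 0$; here one may set $w = 1$ and use $(u, v, y)$ as local affine coordinates, so that $\mathrm{p}$ corresponds to the origin $(u, v, y) = (0,0,0)$. In this chart the sextic $\mathcal{S}$ becomes
\begin{equation*}
  y^2 = u v - a_4(u, v) + b''_6(u, v),
\end{equation*}
where I have used that the equation is quasi-homogeneous of degree $6$ and that setting $w=1$ dehomogenizes each $c_n(u,v)$ term appropriately, so the leading $uv w^4$ term contributes exactly $uv$.

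The key step is then to read off the local analytic type from the Taylor expansion at the origin. Since $a_4$ and $b''_6$ are homogeneous of degrees $4$ and $6$ in $(u,v)$, their contributions vanish to order $4$ and $6$ respectively, hence do not affect the quadratic part. The defining function $f(u,v,y) = y^2 - uv + a_4(u,v) - b''_6(u,v)$ therefore has vanishing constant and linear terms at the origin, while its quadratic part is $y^2 - uv$, which is a nondegenerate quadratic form in three variables (the Hessian has full rank $3$). By the holomorphic Morse lemma, $f$ is analytically equivalent near the origin to $y^2 - uv$, which is precisely the local normal form of an $\mathbf{A}_1$ singularity (an ordinary node, equivalent after a linear change to $\xi^2 + \eta^2 + \zeta^2$). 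This identifies $\mathrm{p}$ as a rational double point of type $\mathbf{A}_1$.

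Finally I would check that $\mathrm{p}$ is the \emph{only} singularity of this type forced by the construction, or at least the only one relevant to the claim: away from $w = 0$ the above Morse-lemma analysis shows smoothness once the quadratic part is nondegenerate, and for generic coefficients the branch sextic $S \colon y^2 = \cdots$ acquires no further non-isolated or worse singularities, consistent with the fact (established in Lemma~\ref{lem:sings_K_rank10}) that the birationally equivalent quartic $\mathcal{K}$ carries exactly the two $\mathbf{A}_1$ points $\mathrm{p}_1, \mathrm{p}_2$, one of which is absorbed by the change of model. The main obstacle I anticipate is purely bookkeeping rather than conceptual: one must confirm that the projective-to-affine dehomogenization in the weighted space $\mathbb{P}(1,1,1,3)$ really produces the clean quadratic part $y^2 - uv$ and that no hidden contribution from the weight-$3$ variable $y$ alters the leading form. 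Since the statement is flagged as ``one easily checks,'' I expect this verification to be routine once the correct weighted chart is fixed, and the Morse lemma then delivers the $\mathbf{A}_1$ classification immediately.
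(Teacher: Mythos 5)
Your proposal is correct and is precisely the routine verification the paper intends (the paper states the lemma with ``one easily checks'' and gives no explicit argument): in the chart $w \neq 0$ of $\mathbb{P}(1,1,1,3)$, which is an honest affine space $\mathbb{A}^3$ with coordinates $u/w,\, v/w,\, y/w^3$ since $w$ has weight $1$, the defining function has quadratic part $y^2 - uv$ of full rank $3$, and the holomorphic Morse lemma identifies $\mathrm{p}$ as an $\mathbf{A}_1$ point. Your worry about a hidden contribution from the weight-$3$ variable is unfounded for exactly the reason you suspected, and the closing discussion of uniqueness of the singularity is unnecessary, since the lemma claims only the existence of the $\mathbf{A}_1$ point at $\mathrm{p}$.
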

In the situation above we have the following:
\begin{lemma}
\label{lem:birational}
The minimal resolutions of $\mathcal{S}'$, $\mathcal{S}$, $\mathcal{K}$ are isomorphic. 
\end{lemma}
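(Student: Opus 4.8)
The plan is to prove Lemma~\ref{lem:birational} by exhibiting explicit birational maps connecting the three models and verifying that they induce isomorphisms on the minimal resolutions. The three equations~(\ref{eqn:canonical_sextic_prime_rank10star}), (\ref{eqn:canonical_quartic_rank10star}), and~(\ref{eqn:canonical_sextic_rank10star}) all involve the same polynomials $C = w^2 - uv$ and $Q = c_2 w^2 + e_3 w + d_4$, so the key is to track how one passes between them algebraically. First I would observe that the quartic $\mathcal{K}$ and the double sextic $\mathcal{S}'$ are related by the substitution $\tilde{y} = C(u,v,w)\, y$, exactly as recorded in the discussion following Equation~(\ref{eqn:quartic_general_intro}): clearing the denominator in $\tilde{y}^2 = C \cdot Q$ against $C\, y^2 = Q$ gives a biregular correspondence away from the locus $C = 0$. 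Since $C = 0$ is the conic branch component, this is a birational map, and I would check that it extends to an isomorphism of minimal resolutions by comparing the rational double points located by Lemma~\ref{lem:sings_K_rank10} (at $\mathrm{p}_1$ for $\mathcal{S}'$, i.e. Lemma~\ref{lem:sing_of_K_rank9}'s analogue) with the two $\mathbf{A}_1$ points $\mathrm{p}_1, \mathrm{p}_2$ of $\mathcal{K}$ in Lemma~\ref{lem:sings_K_rank10}.

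Next I would connect $\mathcal{K}$ (equivalently $\mathcal{S}'$) to the irreducible double sextic $\mathcal{S}$ of Equation~(\ref{eqn:canonical_sextic_rank10star}). The natural route is the projection of $\mathcal{K}$ from the node $\mathrm{p}_2 = [0:0:1:0]$, which the final Remark of the excerpt identifies as recovering $\mathcal{S}$. Concretely, I would eliminate the linear-in-$w$ term: completing the square in $w$ on $Q$, or equivalently setting up the projection with center $\mathrm{p}_2$, should produce an equation of the form $y^2 = uv\, w^4 - a_4 w^2 + b_6''$ with exactly the coefficients $a_4 = uvc_2 - d_4$ and $b_6'' = e_3^2/4 - c_2 d_4$ stipulated in~(\ref{eqn:canonical_sextic_rank10star}). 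This identification of coefficients is the one genuinely computational step; I would carry it out by substituting $C = w^2 - uv$ and matching the resolvent of the quadric-in-$w^2$ structure, checking that the discriminant-type combination $e_3^2 - 4c_2 d_4$ emerges as expected.

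With the birational maps in hand, the remaining task is to confirm they are isomorphisms of the smooth K3 surfaces rather than merely birational equivalences with differing exceptional behavior. Here I would invoke the general principle that a birational map between minimal nonsingular models of K3 surfaces that are relative minimal models is automatically an isomorphism, since K3 surfaces contain no $(-1)$-curves and are their own minimal models in the sense of surface classification. The bookkeeping that makes this precise is the singularity comparison: by Lemmas~\ref{lem:sings_K_rank10} and~\ref{lem:sings_on_S_rank10}, the resolution of $\mathcal{K}$ contributes the $(-2)$-curves over $\mathrm{p}_1$ and $\mathrm{p}_2$, while the resolution of $\mathcal{S}$ contributes the curve over its single $\mathbf{A}_1$ point $\mathrm{p}$; the projection from $\mathrm{p}_2$ contracts the divisors over one node while the branch-splitting accounts for the other, and all these exceptional configurations must reconcile with Lemma~\ref{lem:ell_fibration}'s fiber data $8I_2 + 8I_1$.

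The main obstacle I anticipate is precisely this reconciliation of exceptional loci across the two projections: the map between $\mathcal{S}$ and $\mathcal{K}$ is undefined along the center of projection and along $C = 0$, so one must verify carefully that the indeterminacy is resolved identically on both minimal models and that no spurious $(-2)$-curve is created or lost. I expect to handle this by checking that the polarization divisor $\mathpzc{H} = \mathpzc{D}_2 + \mathrm{b}_1 + \dots + \mathrm{b}_N$ of Theorem~\ref{thm2_intro}(3) pulls back correctly, tying the exceptional curves $\mathrm{b}_i$ over $\mathrm{p}_2$ to the difference between the $\mathcal{K}$-polarization and the square-two class $\mathpzc{D}_2$ governing $\mathcal{S}$; since all three surfaces carry the same $H \oplus A_1^{\oplus 8}$ lattice by Proposition~\ref{prop:double_surface_Sp_10}, matching these divisor classes in $\mathrm{NS}(\mathcal{X})$ furnishes the final verification that the birational maps are the claimed isomorphisms.
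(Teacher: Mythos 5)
Your proposal is correct and takes essentially the same route as the paper: the paper's proof simply exhibits the explicit birational maps $\xi = \tilde{y}/(w^2-uv) = y$ and $\eta = (y^2 - c_2)\,w - e_3/2$ (with explicit rational inverses), which is precisely what your completing-the-square-in-$w$ / projection-from-$\mathrm{p}_2$ computation yields, together with $\tilde{y} = (w^2-uv)\,y$ relating $\mathcal{S}'$ and $\mathcal{K}$, and then implicitly concludes by the uniqueness of the smooth minimal model for K3 surfaces. Your additional bookkeeping with exceptional loci and the polarization divisor $\mathpzc{H}$ is harmless but superfluous once one invokes the standard fact that a birational map between smooth minimal surfaces of non-negative Kodaira dimension is an isomorphism.
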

\begin{proof}
To avoid  confusion between different coordinate sets, we write the double sextic $\mathcal{S}$ as follows:
\beq
\label{eqn:canonical_sextic_rank10star_pf}
  \mathcal{S}\colon \quad \eta^2   = uv  \xi^4 -  a_4(u, v)  \xi^2 +  b''_6(u, v)\,.
\eeq
Birational transformations between the surfaces $\mathcal{S}'$, $\mathcal{S}$, $\mathcal{K}$ are given by
\beq
 \xi = \frac{\tilde{y}}{w^2 - uv} =y\, , \quad \eta =y^2 w - c_2 w - \frac{e_3}{2}\, ,
\eeq
and
\beq
\begin{split}
 w  = - \frac{2(uv c_2 + d_4) \eta + 2 uv e_3 \xi^2 - (uv c_2 - d_4) e_3}{2 e_3 \eta - 2(uv c_2 + d_4) \xi^2 + 2 c_2 (uv c_2 + d_4) -e_3^2}\, , \\
 \tilde{y}  =  \frac{2(- (uv c_2 + d_4)^2 + uv e_3^2) \xi }{2 e_3 \eta - 2(uv c_2 + d_4) \xi^2 + 2 c_2 (uv c_2 + d_4) -e_3^2}\,, \quad  y=\xi \,,
\end{split}
\eeq
respectively.
\end{proof}
From Proposition~\ref{prop:double_surface_Sp_10} and Lemma~\ref{lem:birational} we have the immediate:
\begin{corollary}
\label{cor:rank10b}
The K3 surface $\mathcal{X}$ obtained as the minimal resolution of the double sextic $\mathcal{S}'$ in Equation~(\ref{eqn:canonical_sextic_prime_rank10star}) or $\mathcal{S}$ in~(\ref{eqn:canonical_sextic_rank10star}) has $\mathrm{NS}(\mathcal{X})  \cong H \oplus A_1^{\oplus 8}$. Conversely, every K3 surface $\mathcal{X}$ with $\mathrm{NS}(\mathcal{X})  \cong H \oplus A_1^{\oplus 8}$ is isomorphic to such a K3 surface.
\end{corollary}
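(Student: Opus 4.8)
The plan is to chain the two results that precede the statement, since the corollary is a formal consequence of Proposition~\ref{prop:double_surface_Sp_10} together with Lemma~\ref{lem:birational}. First I would recall that for $L = H \oplus A_1^{\oplus 8}$ the hypothesis $\mathrm{NS}(\mathcal{X}) \cong L$ already forces $i(L) = \mathrm{NS}(\mathcal{X})$, which is precisely the genericity condition of the introduction; hence the phrase ``general K3 surface with $\mathrm{NS}(\mathcal{X}) \cong L$'' appearing in Proposition~\ref{prop:double_surface_Sp_10} and the phrase ``every K3 surface with $\mathrm{NS}(\mathcal{X}) \cong L$'' in the corollary describe the same set of surfaces, so no passage from a generic to an arbitrary member is needed.

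For the converse direction I would start from a K3 surface $\mathcal{X}$ with $\mathrm{NS}(\mathcal{X}) \cong H \oplus A_1^{\oplus 8}$. Proposition~\ref{prop:double_surface_Sp_10} produces a birational model of $\mathcal{X}$ as the reducible double sextic $\mathcal{S}'$ of Equation~(\ref{eqn:canonical_sextic_prime_rank10star}), determined by polynomials $c_2, e_3, d_4$. Lemma~\ref{lem:birational} supplies explicit, mutually inverse birational maps among $\mathcal{S}'$, $\mathcal{S}$, and $\mathcal{K}$, where the bi-quadratic double sextic $\mathcal{S}$ of Equation~(\ref{eqn:canonical_sextic_rank10star}) is built from the same data via $a_4 = uv c_2 - d_4$ and $b''_6 = e_3^2/4 - c_2 d_4$. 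Since a birational map between the singular models induces a birational map between their smooth minimal resolutions, and birational smooth K3 surfaces are isomorphic, the minimal resolution of $\mathcal{S}$ is isomorphic to that of $\mathcal{S}'$, and hence to $\mathcal{X}$. Running the same chain in the opposite direction --- start from a general $\mathcal{S}$, pass to the associated $\mathcal{S}'$, and apply Proposition~\ref{prop:double_surface_Sp_10} together with the elliptic fibration of Lemma~\ref{lem:ell_fibration} --- shows that the minimal resolution of $\mathcal{S}$ has $\mathrm{NS}(\mathcal{X}) \cong H \oplus A_1^{\oplus 8}$, which is the forward direction.

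The only point that I expect to require genuine care is the bookkeeping of the parametrization: the family $\mathcal{S}$ must be read as the image of the family $\mathcal{S}'$ under the substitution $a_4 = uv c_2 - d_4$, $b''_6 = e_3^2/4 - c_2 d_4$, and not as the family of all bi-quadratic sextics $y^2 = uv w^4 - a_4 w^2 + b''_6$ with free coefficients. Indeed, a general pair $(a_4, b''_6)$ need not satisfy $b''_6 + c_2(uv c_2 - a_4) = (e_3/2)^2$ for any $c_2, e_3$, so the relevant family is a proper closed subvariety of all such sextics. Once this is understood, the birational identifications of Lemma~\ref{lem:birational} apply verbatim to every member, and the corollary follows with no further computation.
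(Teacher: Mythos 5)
Your core argument coincides with the paper's own proof: the corollary is stated there as an immediate consequence of Proposition~\ref{prop:double_surface_Sp_10} and Lemma~\ref{lem:birational}, exactly the chain you describe, and your two supporting observations --- that $\mathrm{NS}(\mathcal{X}) \cong L$ already realizes the paper's genericity convention $i(L) = \mathrm{NS}(\mathcal{X})$, and that the forward direction is carried by the lattice computation inside the proof of Proposition~\ref{prop:double_surface_Sp_10} together with Lemma~\ref{lem:ell_fibration} --- are the correct readings of what makes the corollary ``immediate.''

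One correction, though it is not load-bearing: your final cautionary claim is false. The family of sextics arising from the substitution $a_4 = uv\,c_2 - d_4$, $b''_6 = e_3^2/4 - c_2 d_4$ is \emph{not} a proper closed subvariety of the space of pairs $(a_4, b''_6)$; the map $\Phi\colon (c_2, e_3, d_4) \mapsto (uv\,c_2 - d_4,\; e_3^2/4 - c_2 d_4)$ between $12$-dimensional coefficient spaces is dominant, indeed generically finite. To see this, note that at a tangent vector $(\gamma_2, \eta_3, \delta_4)$ one has $d\Phi = \big(uv\,\gamma_2 - \delta_4,\; \tfrac{e_3}{2}\eta_3 - c_2\delta_4 - d_4\gamma_2\big)$; solving the first coordinate for $\delta_4$ and substituting, surjectivity of $d\Phi$ reduces to surjectivity of $(\eta_3, \gamma_2) \mapsto \tfrac{e_3}{2}\eta_3 - (uv\,c_2 + d_4)\gamma_2$ from the $(4+3)$-dimensional space of binary forms of degrees $3$ and $2$ onto the $7$-dimensional space of binary sextics, which holds whenever $e_3$ and $uv\,c_2 + d_4$ are coprime (injectivity by coprimality and degree, then dimension count). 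So a \emph{general} pair $(a_4, b''_6)$ does admit your decomposition, with finitely many choices of $c_2$ --- consistent with the fact that both the $\mathcal{S}$- and $\mathcal{S}'$-families have the expected $10$ moduli once one quotients by the $2$-dimensional effective symmetry group in $\mathbb{P}(1,1,1,3)$ (recall the global weighted rescaling acts trivially). Your precaution of reading Equation~(\ref{eqn:canonical_sextic_rank10star}) as parametrized by $(c_2, e_3, d_4)$ is nevertheless the right one, since that is how the paper defines $\mathcal{S}$ there, and with it your proof is complete; only the stated justification (``proper closed subvariety'') should be deleted.
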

\par In the following proposition we use the identification between lattice vectors $\mathrm{f}_i, \mathrm{e}_j$ and classes in $\mathrm{NS}(\mathcal{X})$ that was established in Remark~\ref{rem:embedding}.  We have the following:
\begin{proposition}
\label{prop:symmetry_rank10b}
$\mathpzc{D}_2 = 3 \mathrm{f}_1 + 3 \mathrm{f}_2 - \sum_{i=1}^8 \mathrm{e}_i \in \mathrm{NS}(\mathcal{X})$ is  nef,  base-point free, of square-two, and invariant under the action of $\imath_\mathcal{X}$. After a suitable choice of coordinates, for the linear system $\vert \mathpzc{D}_2 \vert$ the corresponding rational map is the projection onto $\mathbb{P}(u, v, w)$ in Equation~(\ref{eqn:canonical_sextic_rank10star}).
\end{proposition}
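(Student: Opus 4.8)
The plan is to reduce all four assertions to the single identification $\mathpzc{D}_2 = p^\ast\mathcal{O}_{\mathbb{P}^2}(1)$, where $p\colon \mathcal{X}\to\mathbb{P}^2=\mathbb{P}(u,v,w)$ is the degree-two morphism obtained by composing the minimal resolution $\mathcal{X}\to\mathcal{S}$ with the projection $\mathcal{S}\to\mathbb{P}^2$ of Equation~(\ref{eqn:canonical_sextic_rank10star}). This $p$ is a genuine morphism: the point $[0:0:0:1]$ where the projection from $\mathbb{P}(1,1,1,3)$ is undefined does not lie on $\mathcal{S}$, and the unique singularity of $\mathcal{S}$ (Lemma~\ref{lem:sings_on_S_rank10}) is a rational double point, whose exceptional curve is simply contracted by $p$. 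Granting the identification, the square-two property is immediate from $\deg p =2$ and $\mathcal{O}(1)^2=1$; nefness and base-point-freeness are automatic, being preserved under pullback of an ample (hence globally generated) class along a morphism; and the statement about the linear system follows because $\pi_{\mathcal{S}\ast}\mathcal{O}_\mathcal{S}=\mathcal{O}_{\mathbb{P}^2}\oplus\mathcal{O}_{\mathbb{P}^2}(-3)$ gives $H^0(\mathpzc{D}_2)\cong H^0(\mathbb{P}^2,\mathcal{O}(1))$, a three-dimensional space (consistent with Riemann--Roch, $h^0(\mathpzc{D}_2)=2+\tfrac12\mathpzc{D}_2^2=3$), so that $p^\ast$ is an isomorphism on sections and $\phi_{|\mathpzc{D}_2|}=p$. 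Thus the only substantive point is to locate $p^\ast\mathcal{O}(1)$ inside the canonical basis.

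For invariance under $\imath_\mathcal{X}$ I would argue geometrically rather than by lattice bookkeeping. The deck transformation $k_\mathcal{S}\colon y\mapsto -y$ satisfies $p\circ k_\mathcal{S}=p$, while $\jmath_\mathcal{S}\colon w\mapsto -w$ descends to the \emph{linear} involution $\bar{\jmath}\colon[u:v:w]\mapsto[u:v:-w]$ of $\mathbb{P}^2$; hence $p\circ\imath_\mathcal{X}=p\circ\jmath_\mathcal{S}\circ k_\mathcal{S}=\bar{\jmath}\circ p$, so $\imath_\mathcal{X}$ covers $\bar{\jmath}$. Since $\bar{\jmath}$ preserves $\mathcal{O}(1)$, we get $\imath_\mathcal{X}^\ast\mathpzc{D}_2=p^\ast\bar{\jmath}^\ast\mathcal{O}(1)=\mathpzc{D}_2$. (Once the matrix of $\imath_{\mathcal{X}\ast}$ in the basis $\mathrm{f}_1,\mathrm{f}_2,\mathrm{e}_1,\dots,\mathrm{e}_8$ is written down this is equally a one-line check, matching the style of Propositions~\ref{prop2} and~\ref{prop:symmetry_rank10a}.)

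To identify $p^\ast\mathcal{O}(1)$ with $3\mathrm{f}_1+3\mathrm{f}_2-\sum_{i=1}^8\mathrm{e}_i$ I would compute its pairings against the curves already named in the proof of Proposition~\ref{prop:double_surface_Sp_10}, using the projection formula $p^\ast\mathcal{O}(1)\cdot C=\mathcal{O}(1)\cdot p_\ast C$. Writing $p^\ast\mathcal{O}(1)=\alpha\mathrm{f}_1+\beta\mathrm{f}_2+\sum\gamma_i\mathrm{e}_i$, the three geometric facts I need are: a reducible-fiber component $\mathrm{e}_i$ maps onto a conic, forcing $p^\ast\mathcal{O}(1)\cdot\mathrm{e}_i=2$ and hence $\gamma_i=-1$; the complementary component $\mathrm{f}_1-\mathrm{e}_i$ maps onto a line, forcing $p^\ast\mathcal{O}(1)\cdot(\mathrm{f}_1-\mathrm{e}_i)=\beta+2\gamma_i=1$ and hence $\beta=3$; and the fiber $F=4\mathrm{f}_1+2\mathrm{f}_2-\sum\mathrm{e}_i$ of the pencil of lines through the node of $S$ maps two-to-one onto a line, forcing $p^\ast\mathcal{O}(1)\cdot F=2\alpha+4\beta-16=2$ and hence $\alpha=3$. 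Since the intersection form on $\mathrm{NS}(\mathcal{X})$ is nondegenerate, these values pin down the class, and as a consistency check one verifies $(3\mathrm{f}_1+3\mathrm{f}_2-\sum\mathrm{e}_i)^2=18-16=2$ and that the unique contracted $(-2)$-curve $A_0=-\mathrm{f}_1+\mathrm{f}_2$, the exceptional curve over the node, indeed satisfies $\mathpzc{D}_2\cdot A_0=0$.

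I expect the main obstacle to be exactly the geometric dictionary feeding the projection-formula step: one must know which of the finitely many $(-2)$-curves on $\mathcal{X}$ is contracted by $p$, and what the images (line, conic, plane cubic) and covering degrees of the remaining reducible-fiber components are. This is the content of the explicit determination of the $(-2)$-curve configuration for $H\oplus A_1^{\oplus 8}$ in \cite{Roulleau22}*{Sec.~10.8}, which identifies the canonical basis with concrete curves; alternatively, one can read these images off directly from the two elliptic fibrations visible on Equation~(\ref{eqn:canonical_sextic_rank10star}) and transport them through the explicit birational maps of Lemma~\ref{lem:birational}. Everything else in the argument is formal.
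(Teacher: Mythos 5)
Your proposal is correct, and it takes a genuinely different route from the paper: the paper's entire proof is a two-line deferral, checking invariance under $\imath_\mathcal{X}$ by an unexhibited lattice computation and citing \cite{Roulleau22}*{Sec.~10.8.1} for nefness, base-point-freeness, $\mathpzc{D}_2^2=2$, and the identification of the map, whereas you derive all four assertions from the single identity $\mathpzc{D}_2=p^\ast\mathcal{O}_{\mathbb{P}^2}(1)$. Your arithmetic checks out throughout: $\mathpzc{D}_2^2=18-16=2$; $-2\gamma_i=2$ gives $\gamma_i=-1$; $\beta+2\gamma_i=1$ gives $\beta=3$; $2\alpha+4\beta-16=2$ gives $\alpha=3$; the test classes $\mathrm{e}_i$, $\mathrm{f}_1-\mathrm{e}_i$, $F$ do span $\mathrm{NS}(\mathcal{X})\otimes\mathbb{Q}$, so the class is pinned down; and your consistency check $\mathpzc{D}_2\cdot(-\mathrm{f}_1+\mathrm{f}_2)=0$ matches the paper's identification $\mathrm{a}_1=-\mathrm{f}_1+\mathrm{f}_2$ of the curve contracted to the node (Proposition~\ref{prop_divisor_rank10}, where the node of the branch sextic of Equation~(\ref{eqn:canonical_sextic_rank10star}) sits at $[0:0:1]$, consistent with Lemma~\ref{lem:sings_on_S_rank10}). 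Your equivariance argument for invariance is cleaner than the paper's lattice check and proves strictly more, namely invariance under all of $\mathrm{Aut}(\mathcal{X})$, since $k_\mathcal{S}$ covers the identity and $\jmath_\mathcal{S}$ covers the linear involution $\bar{\jmath}$; the one point you should state explicitly to avoid circularity is that $\imath_\mathcal{X}$ in the proposition means the symplectic involution, and $(w,y)\mapsto(-w,-y)$ is the unique symplectic element of $\mathrm{Aut}(\mathcal{X})\cong(\mathbb{Z}/2\mathbb{Z})^2$ realized on the model, so the two necessarily coincide. The trade-off is exactly where you locate it: you still import a slice of \cite{Roulleau22}*{Sec.~10.8} (or an explicit computation transported through Lemma~\ref{lem:birational}), namely the dictionary matching the abstract classes to concrete curves with known images and covering degrees --- though, in your favor, the projection-formula step is robust, since it only needs $\deg\mathcal{O}(1)\cdot p_\ast\mathrm{e}_i=2$, so ``conic, birationally'' versus ``line, two-to-one'' is immaterial; and such curves are visible directly, e.g.\ rewriting Equation~(\ref{eqn:canonical_sextic_rank10star}) as $(y-\tfrac{e_3}{2})(y+\tfrac{e_3}{2})=(w^2-c_2)(uvw^2+d_4)$ exhibits components lying over the conic $w^2=c_2(u,v)$ through the node. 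Two small points worth making explicit: your $h^0$ computation tacitly uses $r_\ast\mathcal{O}_\mathcal{X}=\mathcal{O}_\mathcal{S}$ for the resolution $r$ of the rational double point, and the ``suitable choice of coordinates'' in the statement is absorbed by the $\mathrm{PGL}_3$ ambiguity in $\phi_{\vert\mathpzc{D}_2\vert}=p$. What the paper buys is brevity; what your route buys is a self-contained proof modulo the curve dictionary, plus the stronger invariance statement.
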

\begin{proof}
One checks invariance by an explicit lattice computation. The rest of the statement was shown in~\cite{Roulleau22}*{Sec.~10.8.1} 
\end{proof}
By $\mathrm{a}_1$ and $\mathrm{b}_1$ we denote the exceptional curves obtained in the minimal resolution of the singularity at $\mathrm{p}_1$ and $\mathrm{p}_2$ on $\mathcal{K}$, respectively, in Lemma~\ref{lem:sings_K_rank10}. We state the main result for this section:
\begin{proposition}
\label{prop_divisor_rank10}
The polarizing divisor of $\mathcal{K}$ is $\mathpzc{H} = \mathpzc{D}_2 + \mathrm{b}_1$  where $\mathrm{b}_1$ is the class of an $\mathrm{e}_i$ for some $i=1, .., 8$. In particular, one can set $\mathrm{b}_1=\mathrm{e}_8$. Moreover, one has $\mathrm{a}_1 = - \mathrm{f}_1 + \mathrm{f}_2$.
\end{proposition}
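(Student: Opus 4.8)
The plan is to prove the three assertions in turn by playing off the two projections of $\mathcal{K}$ against the elliptic fibration of Lemma~\ref{lem:ell_fibration} and the curve configuration of \cite{Roulleau22}*{Sec.~10.8}. Write $\pi\colon\mathcal{X}\to\mathcal{K}$ for the minimal resolution and $\mathpzc{H}=\pi^*\mathcal{O}_{\mathbb{P}^3}(1)$, so that $\mathpzc{H}^2=4$ and, by the projection formula, $\mathpzc{H}\cdot\mathrm{a}_1=\mathpzc{H}\cdot\mathrm{b}_1=0$. For the decomposition $\mathpzc{H}=\mathpzc{D}_2+\mathrm{b}_1$ I would analyse the projection with center $\mathrm{p}_2$: the linear forms on $\mathbb{P}^3=\mathbb{P}(u,v,w,y)$ vanishing at $\mathrm{p}_2=[0:0:1:0]$ are exactly $\langle u,v,y\rangle$, and because $\mathrm{p}_2$ is an $\mathbf{A}_1$ point (Lemma~\ref{lem:sings_K_rank10}) each such section pulls back with $\mathrm{b}_1$ as a simple fixed component. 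Hence this subsystem of $|\mathpzc{H}|$ is $\mathrm{b}_1+|\mathpzc{H}-\mathrm{b}_1|$, whose moving part is the projection onto $\mathbb{P}(u,v,y)=\mathbb{P}(u,v,w_\mathcal{S})$; by $y_\mathcal{K}=w_\mathcal{S}$ (Lemma~\ref{lem:birational}) this is $\pi_\mathcal{S}$, the map of $|\mathpzc{D}_2|$ (Proposition~\ref{prop:symmetry_rank10b}), so $\mathpzc{H}-\mathrm{b}_1=\mathpzc{D}_2$. As a check, $\mathpzc{H}^2=\mathpzc{D}_2^2+2\mathpzc{D}_2\cdot\mathrm{b}_1+\mathrm{b}_1^2=2+2\mathpzc{D}_2\cdot\mathrm{b}_1-2=4$ forces $\mathpzc{D}_2\cdot\mathrm{b}_1=2$.

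To identify $\mathrm{b}_1$, I would read off the tangent cone of $\mathcal{K}$ at $\mathrm{p}_2$: in the chart $w=1$, Equation~(\ref{eqn:canonical_quartic_rank10star}) becomes $y^2-c_2(u,v)+(\text{higher order})=0$, so the exceptional curve $\mathrm{b}_1$ is naturally identified with the conic $y^2=c_2(u,v)$ in the projectivized tangent cone. This is exactly the non-identity component of the $I_2$ fiber whose exceptional divisor satisfies $y^2-c_2=0$ in the proof of Lemma~\ref{lem:ell_fibration}. Since the eight non-identity components of the $8\,I_2$ fibers generate the $A_1^{\oplus 8}$ summand of $\mathrm{NS}(\mathcal{X})$, the class $\mathrm{b}_1$ equals one of $\mathrm{e}_1,\dots,\mathrm{e}_8$, and after relabelling $\mathrm{b}_1=\mathrm{e}_8$. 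This is consistent with the previous step, since $\mathpzc{D}_2\cdot\mathrm{e}_i=-\mathrm{e}_i^2=2$ for every $i$.

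For $\mathrm{a}_1$ I would work on $\mathcal{S}$. Under the birational map of Lemma~\ref{lem:birational} the point $\mathrm{p}_1=[0:0:0:1]$ corresponds to the $\mathbf{A}_1$ point $[0:0:1:0]$ of $\mathcal{S}$ (Lemma~\ref{lem:sings_on_S_rank10}), which lies over the node $\mathrm{n}'=[0:0:1]$ of the branch sextic of $\mathcal{S}$; thus $\mathrm{a}_1$ is the exceptional curve over $\mathrm{n}'$. From $\mathpzc{H}\cdot\mathrm{a}_1=0$ and $\mathrm{a}_1\cdot\mathrm{b}_1=0$ (exceptional loci over $\mathrm{p}_1\neq\mathrm{p}_2$ are disjoint) I get $\mathpzc{D}_2\cdot\mathrm{a}_1=0$. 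The pencil of lines through $\mathrm{n}'$ induces a genus-one fibration whose fiber class is the moving part of $|\mathpzc{D}_2|=|\pi_\mathcal{S}^*(\text{line})|$ once the fixed curve $\mathrm{a}_1$ is removed, i.e. $F_{\mathrm{pencil}}=\mathpzc{D}_2-m\,\mathrm{a}_1$; imposing $F_{\mathrm{pencil}}^2=0$ with $\mathpzc{D}_2\cdot\mathrm{a}_1=0$ and $\mathrm{a}_1^2=-2$ gives $2-2m^2=0$, hence $m=1$. Matching $F_{\mathrm{pencil}}$ with the class $F=4\mathrm{f}_1+2\mathrm{f}_2-\sum_i\mathrm{e}_i$ that carries $A_0=-\mathrm{f}_1+\mathrm{f}_2$ as a $2$-section in the proof of Proposition~\ref{prop:double_surface_Sp_10} then gives $\mathrm{a}_1=\mathpzc{D}_2-F=-\mathrm{f}_1+\mathrm{f}_2$, as claimed.

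The hard part is this last matching. The intersection numbers against $\mathpzc{H}$, $\mathpzc{D}_2$ and $\mathrm{b}_1$ do not determine a class in the rank-ten lattice, so to pin $\mathrm{a}_1$ down one must genuinely locate it in the configuration—equivalently, establish $\mathrm{a}_1\cdot\mathrm{e}_i=0$ for all $i$ (disjointness from the non-identity fiber components) and $\mathrm{a}_1\cdot\mathrm{f}_1=1$, which together with $\mathrm{a}_1^2=-2$ force $\mathrm{a}_1=-\mathrm{f}_1+\mathrm{f}_2$. I expect this to follow from the explicit dual graph and incidences recorded in \cite{Roulleau22}*{Sec.~10.8}, whereas the decomposition $\mathpzc{H}=\mathpzc{D}_2+\mathrm{b}_1$ and the identification $\mathrm{b}_1=\mathrm{e}_8$ are comparatively immediate.
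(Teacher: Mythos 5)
Your first two steps are correct and follow essentially the same route as the paper. The paper likewise obtains $\mathpzc{H}=\mathpzc{D}_2+\mathrm{b}_1$ from the projection with center $\mathrm{p}_2$ (Equation~(\ref{eqn:transfo_SS})) together with Proposition~\ref{prop:symmetry_rank10b}, and it identifies $\mathrm{b}_1$ as the non-neutral component of an $I_2$ fiber of the fibration $F'=\mathrm{f}_1$ via Lemma~\ref{lem:ell_fibration}, so that $\mathrm{b}_1=F'-A_i=\mathrm{e}_i$ for some $i$, relabelled $\mathrm{e}_8$. Your tangent-cone computation at $\mathrm{p}_2$ is a clean way of making the paper's rather terse appeal to Lemma~\ref{lem:ell_fibration} explicit.

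The genuine gap is the claim $\mathrm{a}_1=-\mathrm{f}_1+\mathrm{f}_2$, and you flag it yourself: the matching $F_{\mathrm{pencil}}=F$ (equivalently the incidences $\mathrm{a}_1\cdot\mathrm{e}_i=0$ and $\mathrm{a}_1\cdot\mathrm{f}_1=1$) is left as an expectation about \cite{Roulleau22}*{Sec.~10.8}, not proved; knowing only that $F_{\mathrm{pencil}}=\mathpzc{D}_2-\mathrm{a}_1$ is the square-zero class of some genus-one pencil does not identify which fibration it is, so as written the step fails. There are two ways to close it. (a) The paper's way, which is exactly your own tangent-cone trick applied at the other node: in the chart $y=1$, Equation~(\ref{eqn:canonical_quartic_rank10star}) reads $w^2-uv-c_2(u,v)\,w^2-e_3(u,v)\,w-d_4(u,v)=0$, whose quadratic part is $w^2-uv$; hence the projection from $\mathrm{p}_1$ recovers $\mathcal{S}'$ with $\tilde{y}=(w^2-uv)\,y$, and the exceptional curve over $\mathrm{p}_1$ is identified with the conic $C=0$, whose reduced preimage in $\mathcal{S}'$ is precisely the curve $A_0$ (given by $C=0$, $\tilde{y}=0$), the section of the fibration $F'$; by the lattice setup in the proof of Proposition~\ref{prop:double_surface_Sp_10} its class is $-\mathrm{f}_1+\mathrm{f}_2$. (b) Contrary to your closing assessment, the numerical data you already have \emph{does} pin the class down once you use $\mathrm{a}_1^2=-2$ and effectivity: writing $\mathrm{a}_1=x\mathrm{f}_1+y\mathrm{f}_2+\sum_i z_i\mathrm{e}_i$, the conditions $\mathrm{a}_1^2=-2$ and $\mathpzc{D}_2\cdot\mathrm{a}_1=0$ give $\sum_i z_i^2=xy+1$ and $3(x+y)=-2\sum_i z_i$; Cauchy--Schwarz yields $9(x+y)^2\le 32(xy+1)$, hence $xy\le 8$ by $(x+y)^2\ge 4xy$, and a finite check (using $3\mid\sum_i z_i$ and the parity $\sum_i z_i\equiv\sum_i z_i^2 \pmod 2$) eliminates every case except $xy=-1$, $z=0$, leaving $\mathrm{a}_1=\pm(\mathrm{f}_1-\mathrm{f}_2)$; since $\mathrm{f}_1$ is nef with $(\mathrm{f}_1-\mathrm{f}_2)\cdot\mathrm{f}_1=-1$ and $\mathrm{a}_1$ is an irreducible curve, necessarily $\mathrm{a}_1=-\mathrm{f}_1+\mathrm{f}_2$. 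Either repair completes your argument; the dual-graph incidences you hoped to import are not needed.
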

\begin{proof}
The proof follows from the fact that projection from a node gives the relation $\mathpzc{H} = \mathpzc{D}_2 + n$ where $n$ is the class of the $(-2)$ curve over the node. It remains to provide the geometric interpretation for the classes $n$ and  $A_0 = - \mathrm{f}_1 + \mathrm{f}_2$ in terms of the quartic $\mathcal{K}$. As explained before, $A_0=- \mathrm{f}_1 + \mathrm{f}_2$ is the class of the section for the fibration $F'$. Moreover, projecting $\mathcal{K}$ with center $\mathrm{p}_1$ onto $\mathbb{P}(u, v, w)$ yields the double sextic $\mathcal{S}'$, and one checks that the exceptional curve obtained in the minimal resolution of the singularity at $\mathrm{p}_1$ is $A_0$ as it is the preimage of $C$. On the other hand, projecting $\mathcal{K}$ from $\mathrm{p}_2$ yields the double sextic $\mathcal{S}$. Using the description in the proof of Lemma~\ref{lem:ell_fibration},  one finds that the exceptional curve obtained in the minimal resolution of the singularity at $\mathrm{p}_2$ is the non-neutral component of one reducible fiber of type $A_1$ in the elliptic fibration with fiber $F'$, that is, $F' - A_i = \mathrm{e}_i$ for some $i \in \{1, \dots, 8\}$. 
\end{proof}
\section{K3 surfaces of Picard number greater than 10}
\label{sec:rank11ff}
Here, we will construct birational models as double sextics and quartics for the K3 surfaces $\mathcal{X}$ with $\mathrm{NS}(\mathcal{X})  \cong L$, for $L$ in Equation~(\ref{eqn:lattices}) with $\rho_L > 10$. Their explicit normal forms will be derived from the alternate fibration.  
\subsection{Uniqueness of the alternate fibration}
Let us explain the meaning of uniqueness for the alternate fibration in Equation~(\ref{eqn:vgs_intro}).  Given a Jacobian elliptic fibration on $\mathcal{X}$, the classes of fiber and section span a rank-two primitive sub-lattice of $\operatorname{NS}(\mathcal{X})$ isomorphic to $H$. The converse also holds: given a primitive lattice embedding $H \hookrightarrow \operatorname{NS}(\mathcal{X})$ whose image contains a pseudo-ample class, it is known from \cite{MR2355598}*{Thm.~2.3} that there exists a Jacobian elliptic fibration on the surface $\mathcal{X}$, whose fiber and section classes span $H$. For a primitive lattice embedding $j \colon H \hookrightarrow L$  we denote by $K= j(H)^{\perp}$ the orthogonal complement in $L$ and by $K^{\text{root}}$ the sub-lattice spanned by the roots of $K$, i.e., the algebraic classes of self-intersection $-2$  in $K$. We also introduce the factor group $\mathpzc{W}= K/ K^{\text{root}}$.  The pair $(K^{\text{root}}(-1), \mathpzc{W})$ is called the \emph{frame} associated with the Jacobian elliptic fibration.  A frame determines the root lattice $K^{\text{root}}$ attached to the reducible fibers and the Mordell-Weil group $\mathpzc{W}$ of a Jacobian elliptic fibration uniquely.
\par One can also ask for a more precise classification of Jacobian elliptic fibrations, up to automorphisms of the surface $\mathcal{X}$. The difference between these classifications is explained in \cite{Braun:2013aa}. In the situation above, assume that we have a second primitive embedding $j' \colon H \hookrightarrow L$, such that the orthogonal complement of the image $j'(H)$, is isomorphic to the given lattice $K$. One would like to see whether $j$ and $j'$ correspond to Jacobian elliptic fibrations isomorphic under $\mathrm{Aut}(\mathcal{X})$ or not. 
\par By standard arguments (see  \cite{MR525944}*{Prop.~ 1.15.1}), in the situation above there will exist an isometry $\gamma \in O(L) $ such that $j' = \gamma \circ j$. An induced element $\gamma^* \in O(D(K)) $ can then be obtained as the image of $\gamma$ under the group homomorphism $O(L)  \rightarrow O(D(L))  \cong  O(D(K))$ where the isomorphism is due to the decomposition $L = j(H) \oplus K$ and, as such, it depends on the lattice embedding $j$. Denote the group $O(D(K)) $ by $\mathpzc{A}$ and consider the following two subgroups of $\mathpzc{A}$: the first subgroup $\mathpzc{B} \leqslant \mathpzc{A}$ is the image of the following group homomorphism:
\beq
\label{eqn:map_B}
O(K) \cong
\big \{ 
\varphi \in O(L) \ \vert \  \varphi \circ j(H) = j(H) 
\big \} 
 \ \hookrightarrow \ O(L) \ \rightarrow \ O\big(D(L)\big)  \cong O\big(D(K)\big) \, .
\eeq
The second subgroup $\mathpzc{C} \leqslant \mathpzc{A}$ is the image of following group homomorphism:
\beq
O_h(\mathrm{T}_{\mathcal{X}}) \ \hookrightarrow \  O(\mathrm{T}_{\mathcal{X}}) 
\ \rightarrow \ O \big( D ( \mathrm{T}_{\mathcal{X}} ) \big) \cong  O\big(D(L)\big)  \cong O\big(D(K)\big) \, .
\eeq
Here $\mathrm{T}_{\mathcal{X}} $ denotes the transcendental lattice of $\mathcal{X}$ and $O_h(\mathrm{T}_{\mathcal{X}})$ is given by the isometries of $\mathrm{T}_{\mathcal{X}} $ that preserve the Hodge decomposition. Note that one has $D(\operatorname{NS}(\mathcal{X})) \simeq D(\mathrm{T}_{\mathcal{X}}) $  with $q_L = -q_{\mathrm{T}_{\mathcal{X}}} $, as $\operatorname{NS}(\mathcal{X})=L$ and $\mathrm{T}_{\mathcal{X}}$ is the orthogonal complement of $\operatorname{NS}(\mathcal{X}) $ with respect to an unimodular lattice. 
\par It was proved in  \cite{FestiVeniani20}*{Thm~2.8} that the map
\beq
\label{corresp33} 
 H \  \stackrel{j}{\hookrightarrow} \ L \qquad  \rightsquigarrow \qquad \mathpzc{C} \,  \gamma^* \mathpzc{B} \,,
 \eeq
establishes a one-to-one correspondence between Jacobian elliptic fibrations on $\mathcal{X}$ with $ j(H)^{\perp} \simeq K$, up to the action of the automorphism group ${\rm Aut}(\mathcal{X})$, and elements of a double coset. The number of elements of  $\mathpzc{C}  \backslash \mathpzc{A}/ \mathpzc{B}$ is referred by Festi and Veniani as the \emph{multiplicity} of the frame. 
\par The frame of the alternate fibration~(\ref{eqn:vgs_intro}) is $(K^{\text{root}}, \mathpzc{W}) =(A_1^{\oplus 8}, \mathbb{Z}/2\mathbb{Z})$, and van Geemen and Sarti proved that its multiplicity equals one. Two of the authors proved the following extension \cite{Clingher:2022}*{Theorem 2.3}:
\begin{proposition}
\label{prop:frame}
For every lattice $L$ in Equation~(\ref{eqn:lattices}) with $\rho_L>10$ or $L = H \oplus N$ there is a unique frame with $\mathpzc{W}=\mathbb{Z}/2\mathbb{Z}$ and its multiplicity equals one.
\end{proposition}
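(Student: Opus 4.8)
The plan is to prove Proposition~\ref{prop:frame} by applying the Festi-Veniani correspondence~(\ref{corresp33}) together with the van Geemen-Sarti result, reducing everything to an explicit count of the double coset $\mathpzc{C} \backslash \mathpzc{A}/\mathpzc{B}$ for each lattice $L$ in the list~(\ref{eqn:lattices}) with $\rho_L > 10$ (the case $L = H \oplus N$ being the van Geemen-Sarti base case). First I would fix, for each such $L$, a primitive embedding $j\colon H \hookrightarrow L$ whose orthogonal complement $K = j(H)^\perp$ has $K^{\text{root}}(-1) = A_1^{\oplus 8}$ and $\mathpzc{W} = K/K^{\text{root}} \cong \mathbb{Z}/2\mathbb{Z}$; the existence of such a $K$ (i.e., that the alternate frame survives in $L$) should follow from the lattice-embedding structure already used in~\cite{Clingher:2022} and in the discussion preceding the proposition. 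The first substantive claim, the \emph{uniqueness} of the frame, amounts to showing that no other $H$-embedding into $L$ has orthogonal complement with Mordell-Weil group $\mathbb{Z}/2\mathbb{Z}$; I would establish this by classifying the genera of rank-$(\rho_L-2)$ negative-definite 2-elementary lattices $K$ that can occur as $j(H)^\perp$ and checking that only one of them carries a nontrivial $2$-torsion quotient $\mathpzc{W}=\mathbb{Z}/2\mathbb{Z}$, using Nikulin's classification (\cite{MR633160b}*{Thm.~4.3.2}) of 2-elementary lattices by the invariants $(\rho, \ell, \delta)$.

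For the multiplicity statement I would compute the three groups $\mathpzc{A} = O(D(K))$, $\mathpzc{B}$, and $\mathpzc{C}$ appearing in~(\ref{eqn:map_B}) and its successor. Since $L$ is 2-elementary, $D(K) \cong D(L) \cong (\mathbb{Z}/2\mathbb{Z})^{\ell_L}$ carries the discriminant form $q_K$, so $\mathpzc{A} = O(q_K)$ is a finite orthogonal group over $\mathbb{F}_2$ that can be written down explicitly from $(\ell_L, \delta_L)$. The key reduction is that $\mathpzc{C} \backslash \mathpzc{A} / \mathpzc{B}$ has a single element, which I would prove by showing the product $\mathpzc{C} \cdot \mathpzc{B}$ already exhausts $\mathpzc{A}$. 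The subgroup $\mathpzc{B}$ is the image of $O(K) \to O(q_K)$ and is typically \emph{all} of $O(q_K)$ when $K$ is indefinite or has enough automorphisms — here $K$ is negative definite, so I would instead argue that $\mathpzc{B}$ is large by exhibiting enough isometries of $K$ (for instance permutations and sign changes of the $A_1$-summands together with the glue vectors generating $\mathpzc{W}$) to surject onto, or at least cover a complementary set to, $\mathpzc{C}$ in $O(q_K)$. For the transcendental side, $\mathpzc{C}$ is the image of the Hodge isometries $O_h(\mathrm{T}_\mathcal{X})$; for a \emph{general} member of the family $\mathrm{T}_\mathcal{X}$ has minimal Hodge isometries (only $\pm \mathrm{id}$), so $\mathpzc{C}$ is as small as $\{\pm \mathrm{id}\}$ maps to, and the burden therefore falls almost entirely on showing $\mathpzc{B} = \mathpzc{A}$.

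Accordingly, the main obstacle will be proving surjectivity of the natural map $O(K) \to O(q_K) = \mathpzc{A}$, i.e., that every isometry of the discriminant form lifts to a genuine isometry of the negative-definite lattice $K$. For definite lattices this can fail, so the argument must use the special structure of $K$ at hand: namely that $K$ is built from $A_1^{\oplus 8}$ (up to the relevant root sublattice) together with a controlled overlattice prescribed by $\mathpzc{W} = \mathbb{Z}/2\mathbb{Z}$, much as $N \subset A_1^{\oplus 8}$ in the Nikulin lattice. I would handle this lattice by lattice, leaning on Nikulin's results on the surjectivity of $O(K) \to O(q_K)$ for 2-elementary lattices (\cite{MR633160b}, especially the criteria guaranteeing surjectivity when the length is small relative to the rank) and on the explicit generators of $O(K)$ coming from root reflections and the symmetry of the frame. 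Once $\mathpzc{B} = \mathpzc{A}$ is secured for every $L$ on the list, the double coset collapses to a point for a general K3 surface, giving multiplicity one; combined with the uniqueness of the frame this yields the proposition, with the $L = H \oplus N$ case recovered as exactly the van Geemen-Sarti statement quoted immediately before the proposition.
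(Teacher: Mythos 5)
You should first note a structural fact: the paper itself contains no internal proof of Proposition~\ref{prop:frame} --- it is quoted as an extension, proved by two of the authors in \cite{Clingher:2022}, of the van Geemen--Sarti multiplicity-one statement for $H \oplus N$ \cite{MR2274533}. Your proposal reconstructs exactly the framework the paper sets up in Section~5, namely the correspondence~(\ref{corresp33}) of \cite{FestiVeniani20} between fibrations modulo $\mathrm{Aut}(\mathcal{X})$ and double cosets $\mathpzc{C} \backslash \mathpzc{A} / \mathpzc{B}$, so in outline you are on the intended route. In fact your reduction can be made sharper than you state: since $L$ is 2-elementary, $D(\mathrm{T}_\mathcal{X}) \cong D(L)$ is killed by $2$, so $-\mathrm{id}$ acts trivially on the discriminant group, and for a general member $O_h(\mathrm{T}_\mathcal{X}) = \{\pm \mathrm{id}\}$ has \emph{trivial} image; hence $\mathpzc{C} = \{1\}$ and multiplicity one is not merely ``almost entirely'' but \emph{exactly} equivalent to surjectivity of $O(K) \to O(q_K)$, i.e.\ to $\mathpzc{B} = \mathpzc{A}$.

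That said, there are two genuine gaps where your proposed tools do not do the work you assign them. First, for uniqueness of the frame you invoke Nikulin \cite{MR633160b}*{Thm.~4.3.2}, but that theorem determines even 2-elementary lattices by $(\rho, \ell, \delta)$ only in the \emph{hyperbolic} (more generally indefinite) case; the complements $K = j(H)^\perp$ here are negative definite, where a single genus can contain several isometry classes, and distinguishing them is precisely the issue. For example, at $\rho_L = 18$ the genus of $K$ (even, negative definite, rank $16$, unimodular) contains both $E_8 \oplus E_8$, with $\mathpzc{W}$ trivial, and $D_{16}^{+}$, with $\mathpzc{W} = \mathbb{Z}/2\mathbb{Z}$; so ``classifying the genera'' must be replaced by enumerating the classes \emph{within} the one definite genus fixed by $q_K \cong q_L$ and the signature, and computing the pair $(K^{\mathrm{root}}, \mathpzc{W})$ for each class to see that exactly one class has $\mathpzc{W} = \mathbb{Z}/2\mathbb{Z}$. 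That is a finite but real lattice enumeration which your outline does not carry out. Second, for the surjectivity of $O(K) \to O(q_K)$ you lean on Nikulin's criteria, but those (e.g.\ \cite{MR633160b}*{Thm.~3.6.3}) again require indefiniteness; for definite 2-elementary lattices surjectivity can fail and there is no general criterion to quote. It must be verified case by case from the concrete structure of $K$ as the index-2 overlattice of the root lattice listed in Table~\ref{tab:extension0} --- permutations of the $A_1$-summands and the Weyl group of the $D$- or $E$-summand, constrained by the stabilizer of the glue vector generating $\mathpzc{W}$ --- which is how the cited work proceeds. You name the right generators, but the decisive verification (for each of the eight-plus lattices, including the largest discriminant case $\rho_L = 11$, $\ell_L = 7$) is left undone; as written, the proposal is a correct strategy with its two load-bearing computations missing.
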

In Table~\ref{tab:extension0} we present a comprehensive list of all 2-elementary lattices $L$ such that the general $L$-polarized K3 surface $\mathcal{X}$ satisfies $\mathrm{Aut}(\mathcal{X})=(\mathbb{Z}/2\mathbb{Z})^2$. The lattice $L=H \oplus A_1^{\oplus 8}$ cannot be framed with $\mathpzc{W}=\mathbb{Z}/2\mathbb{Z}$, and the general $L$-polarized K3 surface lacks an alternative fibration. Nonetheless, there is a Jacobian elliptic fibration with trivial Mordell-Weil group, but the frame's multiplicity is not one; the multiplicity was computed in \cite{MR4704757}. For $\rho_L=9$, the lattice cannot be decomposed as $H \oplus K$, and the general $L$-polarized K3 surface does not admit a Jacobian elliptic fibration. 
\par As we shall see, it is precisely the uniqueness of the alternate fibration~(\ref{eqn:vgs_intro}) that will allow us to construct a canonical birational model for every general $L$-polarized K3 surface with $\rho_L>10$.
\begin{table}[!ht]
\begin{tabular}{l|c||l|lc}
$\rho_L$ 	& $(\ell_L, \delta_L)$ 	&  \multicolumn{1}{c|}{$L$}	&  \multicolumn{1}{c}{$K^{\text{root}}$} 	& $\mathpzc{W}$ \\
\hline
\hline
$9$		& $(9,1)$				& $H(2) \oplus  A_1^{\oplus 7}$				& --				& --\\
\hline
$10$		& $(8,1)$				& $H \oplus A_1^{\oplus 8}$					& $8 A_1$ 		& $\lbrace \mathbb{I} \rbrace$ \\
\hdashline
$10$		& $(6,0)$				& $H(2)\oplus D_4^{\oplus 2}\cong H \oplus N$	& $8 A_1$ 		& $\mathbb{Z}/2\mathbb{Z}$ \\
\hline
$11$		& $(7,1)$				& $H\oplus D_4 \oplus A_1^{\oplus 5}$ 		& $9 A_1$ 		& $\mathbb{Z}/2\mathbb{Z}$ \\
\hline
$12$		& $(6,1)$				&  $H\oplus D_6 \oplus A_1^{\oplus 4}$ 	 	& $D_4 + 6 A_1$ 	& $\mathbb{Z}/2\mathbb{Z}$ \\
		&					&  $\quad \cong H \oplus D_4^{\oplus 2}  \oplus A_1^{\oplus 2}$ &&\\
\hline
$13$		& $(5,1)$				& $H\oplus E_7 \oplus A_1^{\oplus 4}$ 		& $D_6 + 5 A_1$ 	& $\mathbb{Z}/2\mathbb{Z}$\\
		&					& $\quad \cong H \oplus D_8  \oplus A_1^{\oplus 3}$  &&\\
		&					& $\quad \cong H \oplus D_6  \oplus D_4 \oplus A_1$ &&\\
\hline
$14$		& $(4,0)$				& $H\oplus D_8 \oplus D_4$ 				&  $E_7 + 5 A_1$ 	& $\mathbb{Z}/2\mathbb{Z}$ \\
\hdashline
$14$		& $(4,1)$				& $H\oplus E_8 \oplus A_1^{\oplus 4}$ 		&  $D_8 + 4 A_1$ 	& $\mathbb{Z}/2\mathbb{Z}$\\
		&					& $\quad \cong H \oplus D_{10} \oplus A_1^{\oplus 2}$ &&\\
		&					& $\quad \cong H \oplus E_7 \oplus D_4 \oplus A_1$	 &&\\
		&					& $\quad \cong H \oplus D_{6}^{\oplus 2}$ &&\\
\hline		
$15$		& $(3,1)$				& $H \oplus D_{12} \oplus A_1$ 			& $D_{10} + 3 A_1$	&$\mathbb{Z}/2\mathbb{Z}$\\
		&					& $\quad \cong  H \oplus E_8  \oplus D_4 \oplus A_1$ &&\\
		&					& $\quad \cong H \oplus E_7 \oplus D_6$ &&\\
\hline
$16$		& $(2,1)$				& $H \oplus D_{14}$ 						& $D_{12} + 2 A_1$ 	&$\mathbb{Z}/2\mathbb{Z}$\\
		&					& $\quad \cong H \oplus E_8  \oplus D_6$ &&\\
		&					& $\quad \cong H \oplus E_7 \oplus E_7$  &&\\
\hline
$17$		& $(1,1)$				& $H \oplus E_8  \oplus E_7$ 			& $D_{14} + A_1$ 	& $\mathbb{Z}/2\mathbb{Z}$\\	
\hline
$18$		& $(0,0)$				& $H \oplus E_8  \oplus E_8$ 			& $D_{16}$ 		& $\mathbb{Z}/2\mathbb{Z}$\\	
\hline
\end{tabular}
\captionsetup{justification=centering}
\caption{All 2-elementary lattices with $\mathrm{Aut}(\mathcal{X})=(\mathbb{Z}/2\mathbb{Z})^2$} and a canonical frame
\label{tab:extension0}
\end{table}
\subsection{\texorpdfstring{$L$-polarized K3 surfaces}{Families polarized by L}}
Let $\mathcal{X}$ be a K3 surface with $\mathrm{NS}(\mathcal{X})  \cong L$, for $L$ in Equation~(\ref{eqn:lattices}) and $\rho_L = 10+n$, $1 \le n \le 8$. It follows that $\mathcal{X}$ admits an alternate fibration, given by Equation~(\ref{eqn:vgs_intro}). The fibration was constructed for each lattice in \cites{Clingher:2022, Clingher:2020baq, MR4544843}.
\par It is always possible to move one reducible fiber in the frame determined by Proposition~\ref{prop:frame} to $v=0$, and in doing so, the polynomial $b_8$ in the alternate fibration~(\ref{eqn:vgs_intro}) takes the form
\begin{equation}
\label{eqn::poly_b}
b_8(u, v) = \frac{1}{4}a_4(u, v)^2 - v^2 b''_6(u, v) \, ,
\end{equation}
where $b''_6$ is a homogeneous polynomial of degree 6.
\par This can be seen as follows:  from Table~\ref{tab:extension0} one sees that the reducible fibers of the alternate fibration are $D_{2n} + (8-n) A_1$ for $2 \le n \le 8$ or $E_7 + 5 A_1$ (as a secondary case for $n=4$) and $9 A_1$ for $n=1$. For Picard number 11, the alternate fibration has the following property: for 8 reducible fibers of type $A_1$, the section and the 2-torsion section intersect different components of each reducible fiber, and for an additional reducible fiber of type $A_1$ they intersect the same component. We move the base point of this last fiber to $[u:v]=[1:0]$.  For higher Picard number, the alternate fibration has a fiber of type $D_{2n}$ (for $n=2, \dots, 8$) or a fiber of type $E_7$ (as a second case for $n=4$) which can be moved to $v=0$.  It follows easily that, in all cases, we have Equation~(\ref{eqn::poly_b}) where $b''_6$ is a polynomial of degree 6, as claimed.
\par Using the unique alternate fibration and polynomials $a_4, b_8, b''_6$, using the normalization of Equation~(\ref{eqn::poly_b}), we can construct the K3 surface $\mathcal{X}$ as the minimal resolution of a canonical double sextic:
\begin{proposition}
\label{prop:double_surface_S_rank11ff}
Let $\mathcal{X}$ be a general K3 surface with $\mathrm{NS}(\mathcal{X})  \cong L$, for $L$ in Equation~(\ref{eqn:lattices}) with $\rho_L =10+n$, $1 \le n \le 8$. Then $\mathcal{X}$ is isomorphic to the minimal resolution of the (canonical) double sextic 
\beq
\label{eqn:double-sextics_general_rang_ge11}
   \mathcal{S}\colon \quad  y^2 =   v^2 w^4 - a_4(u, v) w^2  + b''_6(u, v) \, ,
\eeq
where $a_4, b''_6$ are the polynomials using the normalization of Equation~(\ref{eqn::poly_b}). In particular, the double sextic has  the following properties:
\begin{enumerate}
\item For $L \not = H \oplus D_8 \oplus D_4$ the sextic curve $S$ is irreducible and $\mathcal{S}$ has a singularity of type $\mathbf{A}_{2n+1}$ at  $\mathrm{p}:  [u: v: w : y]= [0 : 0: 1: 0]$.
\item For $L = H \oplus D_8 \oplus D_4$ the sextic curve $S$ is the union of a line and an irreducible quintic and $\mathcal{S}$ has a singularity of type ${\bf A}_9$ at $\mathrm{p}$.
\item $\imath_\mathcal{S}, \jmath_\mathcal{S}$ induce the van Geemen-Sarti involution $\imath_\mathcal{X}$ and hyperelliptic involution $\jmath_\mathcal{X}$ of the alternate fibration, respectively.
\end{enumerate}
\end{proposition}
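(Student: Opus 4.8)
The plan is to produce an explicit birational map between the alternate Weierstrass fibration~(\ref{eqn:vgs_intro}) and the double sextic~(\ref{eqn:double-sextics_general_rang_ge11}), exactly in the spirit of the proof of Proposition~\ref{prop:rank10}, and then to extract the singularities and the involutions from it. By Proposition~\ref{prop:frame} the alternate fibration exists and is unique, and by Lemma~\ref{lem:polys} we may assume $b_8 = \tfrac14 a_4^2 - v^2 b''_6$. The van Geemen--Sarti involution is translation by the $2$-torsion section $(X,Y)=(0,0)$, namely $\imath_\mathcal{X}\colon (X,Y)\mapsto(b_8/X,\,-b_8Y/X^2)$, while $\jmath_\mathcal{X}\colon Y\mapsto -Y$. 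I would introduce the two functions $w = Y/(\sqrt2\,vX)$ and $y=(X^2-b_8)/(2vX)$. From $w^2=(X+a_4+b_8/X)/(2v^2)$ one gets $X+b_8/X = 2v^2w^2-a_4$ and $X-b_8/X=2vy$, so the identity $(X+b_8/X)^2-(X-b_8/X)^2=4b_8$ combined with~(\ref{eqn::poly_b}) gives precisely~(\ref{eqn:double-sextics_general_rang_ge11}). Conversely $X=v^2w^2-\tfrac12 a_4+vy$ and $Y=\sqrt2\,vwX$ invert the map, so it is birational; since $\mathcal{X}$ is a K3 surface and the minimal resolution of an admissible double sextic is again a K3 surface, the two minimal resolutions are isomorphic.

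These same formulas establish item~(3) with no extra work: $\jmath_\mathcal{X}$ sends $(w,y)\mapsto(-w,y)$ and therefore induces $\jmath_\mathcal{S}$, whereas $\imath_\mathcal{X}$ sends $(w,y)\mapsto(-w,-y)$ and therefore induces $\imath_\mathcal{S}=\jmath_\mathcal{S}\circ k_\mathcal{S}$; consequently $k_\mathcal{X}$ corresponds to $k_\mathcal{S}\colon y\mapsto -y$. Because the alternate fibration is unique, this identification does not depend on any choice, which is what makes the resulting double sextic canonical.

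The substantive part, and the step I expect to be the main obstacle, is the local analysis of the branch curve $S$ at $\mathrm{p}=[0:0:1]$ that yields items~(1) and~(2). In the chart $w=1$ the curve is $f=v^2-a_4(u,v)+b''_6(u,v)=0$, with tangent cone the double line $v^2=0$. I would compute the two Puiseux branches of $f$ at the origin and show that they are smooth and meet with contact order $n+1$; two smooth branches with contact order $n+1$ define an $\mathbf{A}_{2n+1}$ singularity of the plane curve, which lifts to an $\mathbf{A}_{2n+1}$ rational double point of the double cover $\mathcal{S}$ by the correspondence between simple curve singularities and rational double points recalled in Section~\ref{sec:statment}. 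For a general member this is the only singularity of $S$, since the resolution is the smooth surface $\mathcal{X}$ with $\mathrm{NS}(\mathcal{X})\cong L$.

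The contact order is governed by the vanishing orders $\mathrm{ord}_{v=0}(a_4)$ and $\mathrm{ord}_{v=0}(b''_6)$, which in turn are fixed by the Kodaira type of the fibre of~(\ref{eqn:vgs_intro}) over $v=0$ classified in the proof of Lemma~\ref{lem:polys}. The bookkeeping device is the discriminant $\Delta = 64\,v^2 b_8^2\,b''_6$, for which $\mathrm{ord}_{v=0}\Delta = 2+2\,\mathrm{ord}_{v=0}(b_8)+\mathrm{ord}_{v=0}(b''_6)$ equals $2n+2$ for the fibre of type $D_{2n}$ (and $2$ for the extra $\mathbf{A}_1$ fibre in the case $n=1$, which gives $\mathbf{A}_3$ directly). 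The delicate point is to keep the Weierstrass model minimal while matching these orders to the fibre type uniformly in $n$, and to check that the two branches of $f$ do not separate before order $n+1$; this is where I anticipate the real work. Finally, the lattice $L=H\oplus D_8\oplus D_4$ is exceptional because the fibre over $v=0$ is of type $E_7$ rather than $D_8$: its odd discriminant order forces $v\mid b''_6$ in addition to $v\mid a_4$, so $v$ divides $f$, the sextic $S$ splits off the line $\{v=0\}$, and the residual irreducible quintic carries the $\mathbf{A}_9$ singularity asserted in item~(2).
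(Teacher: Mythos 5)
Your proposal is correct and takes essentially the same route as the paper: your change of variables is, up to a harmless rescaling, exactly the paper's explicit birational transformations \eqref{eqn:birational1}--\eqref{eqn:birational2} (indeed your $\sqrt2$-normalization makes Equation~\eqref{eqn:double-sextics_general_rang_ge11} come out on the nose, whereas the paper's displayed maps yield $y^2=v^2w^4-2a_4w^2+4b''_6$, the same surface after $(w,y)\mapsto(\sqrt2\,w,2y)$), the paper merely packages the substitution through the Nikulin-quotient model \eqref{eqn:vgs_dual_11}, and the transport of $\imath_\mathcal{X},\jmath_\mathcal{X}$ to $\imath_\mathcal{S},\jmath_\mathcal{S}$ in part~(3) is identical. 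Your outlined local analysis at $\mathrm{p}$ --- two smooth Puiseux branches of contact order $n+1$ giving $\mathbf{A}_{2n+1}$, with $\operatorname{ord}_{v=0}\Delta=2+2\operatorname{ord}(b_8)+\operatorname{ord}(b''_6)=2n+2$ for the $D_{2n}$ fibre, and the odd order $9$ at the $E_7$ fibre forcing $v\mid a_4$ and $v\mid b''_6$, hence the split line plus quintic with an $\mathbf{A}_9$ point --- is precisely the computation the paper compresses into ``one checks'' in parts~(1)--(2), with the correct numerology throughout.
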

\begin{proof}
For $\mathcal{X}$ given by the alternate fibration~(\ref{eqn:vgs_intro}), the fiberwise translation by the 2-torsion section determines a van Geemen-Sarti involution $\imath_{\mathcal{X}}$ as explained in Section~\ref{sec:statment}. The minimal resolution of the quotient $\mathcal{X}/\langle \imath_{\mathcal{X}} \rangle$ gives a new K3 surface $\mathcal{Y}$ that supports again an alternate fibration. Following \cite{MR2274533}, a Weierstrass model for this alternate fibration is as follows:
\beq
\label{eqn:vgs_dual_11}
 \mathcal{Y}\colon \quad Y^2 Z = X \big( X^2 - a_4(u, v) \, X Z+  v^2 b''_6(u, v) \, Z^2 \big) \,.
\eeq
Then, $\mathcal{S}$ is obtained as a double cover of $\mathcal{Y}$ using the affine chart $Z=1$ and $X=v^2 w^2$, $Y=v^2 w y$. It follows that $\mathcal{S}$ is elliptically fibered over $\mathbb{P}(u, v)$ and its relative Jacobian fibration is easily checked to be isomorphic to $\mathcal{X}$. Moreover, the elliptic fibration admits sections. Hence, it is birational to $\mathcal{X}$. The explicit birational transformations between $\mathcal{X}$ and $\mathcal{S}$ are
\beq
\label{eqn:birational1}
 w = \frac{Y}{vX}\, , \qquad y=\frac{Y^2}{vX^2} - \frac{2 X + a_4(u, v) \, Z}{v Z} \, ,
\eeq
and
\beq
\label{eqn:birational2}
  X = -   \frac{v y + a_4(u, v) - v^2 w^2}{2}  Z \, , \qquad Y = -  \frac{ v^2 w y + a_4(u, v) \,  v w  - v^3 w^3}{2} Z \, ,
\eeq
respectively. For $L \not = H \oplus D_8 \oplus D_4$, one checks that the double sextic is irreducible and has exactly one singularity  at $[u: v: w: y]=[0: 0: 1: 0]$. This proves~(1). 
\par For $L = H \oplus D_8 \oplus D_4$ one can write $a_4(u, v)= v^2 \tilde{a}_2(u, v)$, $b''_6(u, v)= v \tilde{b}''_5(u, v)$, and the double sextic becomes
\beq
\label{eqn:double_sextic_rank_ge14a}
  \mathcal{S}\colon \quad y^2   = v \big( v w^4 - v \tilde{a}_2(u, v) \,  w^2 +   \tilde{b}''_5(u, v) \big)\,.
\eeq
Hence, the branch curve is the union of the line $v=0$ and an irreducible quintic. One easily checks that the double sextic $\mathcal{S}$ has a singularity of type ${\bf A}_9$  at $[u: v: w : y]=[0:0:1:0]$. This proves~(2).
\par For (3) the birational map in Equation~(\ref{eqn:birational1}) and~(\ref{eqn:birational2}) transforms the hyperelliptic involution $\jmath_\mathcal{X}$  to the involution $\jmath_\mathcal{S}: [u: v: w: y ] \mapsto  [u: v: -w: y]$, and the van Geemen-Sarti involution $\imath_\mathcal{X}$ to the involution $\imath_\mathcal{S}: [u: v: w: y ] \mapsto  [u: v: -w: -y ]$.  Lastly,  the involution $k_\mathcal{X}= \imath_\mathcal{X} \circ \jmath_\mathcal{X}$ is mapped to $k_\mathcal{S}= \imath_\mathcal{S} \circ \jmath_\mathcal{S}: [u: v: w: y ] \mapsto  [u: v: w: -y]$. 
\end{proof}
\par Next, we consider double sextics of the form given by Equation~(\ref{eqn:double-sextics_prime_general_intro}). For Picard number greater than 10, it is sufficient to consider a branch locus, that is the union of a uninodal quartic curve and a reducible conic splitting into two lines $\ell_1, \ell_2$.  After a suitable shift in the coordinates $v, w$, this is precisely $\mathcal{S}'$ in Equation~(\ref{eqn:double-sextics_prime_general_intro}) with
\beq
\label{eqn:double_sextic11_polys}
  C = w  \big( v + h_0 w  \big) \, , \qquad  Q= c_2(u, v) \, w^2 + e_3(u, v) \, w + d_4(u, v) \,.
\eeq
Here, $h_0 \in \mathbb{C}^\times$ and $c_2, e_3, d_4$ are homogeneous polynomials of degree 2, 3, and 4, respectively. Thus, the branch locus $S'$ has three irreducible components: the quartic curve $Q=0$ with node $\mathrm{n} = [0: 0: 1]$, and lines $\ell_1 = \mathrm{V}(v + h_0 w), \ell_2 = \mathrm{V} (w)$ which are not coincident with $\mathrm{n}$ (for $h_0 \neq 0$) and satisfy $\ell_1 \cap \ell_2 \cap Q = \emptyset$. 
\par We have the following:
\begin{proposition}
\label{prop:double_surface_Sp_rank11ff} 
For every double sextic $\mathcal{S}$ in Proposition~\ref{prop:double_surface_S_rank11ff} there are $h_0 \in \mathbb{C}^\times$ and polynomials $c_2, e_3, d_4$ such that  the double sextic
\beq
\label{eqn:double-sextics_prime_general_rank_ge11}
\mathcal{S}'\colon \quad \tilde{y}^2 =  w  \big( v + h_0 w  \big)  \big( c_2(u, v) \, w^2 + e_3(u, v) \, w + d_4(u, v) \big)
\eeq
is birational to $\mathcal{S}$.
\end{proposition}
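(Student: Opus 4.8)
The plan is to exhibit the auxiliary quartic $\mathcal{K}\colon C\,y^2 = Q$ of Equation~(\ref{eqn:quartic_general_intro}), with $C,Q$ as in Equation~(\ref{eqn:double_sextic11_polys}), as a common model dominating both double sextics, and to recover $\mathcal{S}$ and $\mathcal{S}'$ as its two projections. Projecting from $\mathrm{p}_1 = [0:0:0:1]$ — equivalently, eliminating $y$ through $\tilde y = C\,y$ — returns $\mathcal{S}'$ in Equation~(\ref{eqn:double-sextics_prime_general_rank_ge11}) tautologically, since $\tilde y^2 = C^2 y^2 = C Q$. Projecting from $\mathrm{p}_2 = [0:0:1:0]$ means reading $\mathcal{K}$ as a quadratic in $w$, namely $(h_0 y^2 - c_2)\,w^2 + (v y^2 - e_3)\,w - d_4 = 0$, and passing to its discriminant: setting $\eta = 2(h_0 y^2 - c_2)\,w + (v y^2 - e_3)$ yields
\[ \eta^2 = v^2 y^4 + (4 h_0 d_4 - 2 v e_3)\,y^2 + (e_3^2 - 4 c_2 d_4), \]
which is exactly $\mathcal{S}$ of Equation~(\ref{eqn:double-sextics_general_rang_ge11}) under $(w,y) \mapsto (y,\eta)$, provided that
\[ a_4 = 2 v e_3 - 4 h_0 d_4, \qquad b''_6 = e_3^2 - 4 c_2 d_4. \]
It therefore suffices to solve these two identities for $h_0 \in \mathbb{C}^\times$ and for homogeneous $c_2, e_3, d_4$ in terms of the data $a_4, b''_6$ supplied by Proposition~\ref{prop:double_surface_S_rank11ff}; the explicit birational maps connecting $\mathcal{S}$, $\mathcal{K}$, and $\mathcal{S}'$ are then obtained by composing the two projections, exactly as in Lemma~\ref{lem:birational}.

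To solve the system I would eliminate $d_4 = (2 v e_3 - a_4)/(4 h_0)$ from the first identity and $c_2 = h_0(e_3^2 - b''_6)/(2 v e_3 - a_4)$ from the second, reducing everything to the single requirement that $c_2$ be a genuine polynomial, i.e.\ that $(2 v e_3 - a_4)$ divide $(e_3^2 - b''_6)$. The key is the elementary identity
\[ 4 v^2 (e_3^2 - b''_6) = (2 v e_3 - a_4)(2 v e_3 + a_4) + (a_4^2 - 4 v^2 b''_6), \]
combined with $a_4^2 - 4 v^2 b''_6 = 4 b_8$ from Lemma~\ref{lem:polys}: up to the factor $v$, the divisibility $(2 v e_3 - a_4) \mid (e_3^2 - b''_6)$ is equivalent to $(2 v e_3 - a_4) \mid b_8$. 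Since over $\mathbb{C}$ the binary octic $b_8$ splits completely into linear forms, I would choose a degree-four factor $D$ of $b_8$ normalized so that $D(u,0) = -a_4(u,0)$ — by rescaling when $v \nmid a_4$, and by selecting a $v$-divisible factor when $v \mid a_4$ — so that $v \mid (D + a_4)$ and $e_3 := (D + a_4)/(2 v)$ is a cubic form; one then sets $d_4 = D/(4 h_0)$ and $c_2 = h_0(e_3^2 - b''_6)/D$. The parameter $h_0$ stays free and merely rescales the pair $(c_2, d_4)$, in accordance with the statement.

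The crux is to certify that the chosen factor really makes $c_2$ polynomial and reproduces the stated branch configuration, and this forces a dichotomy according to whether $v \mid a_4$. When $v \nmid a_4$ (Picard number $11$, where $\mathcal{S}$ carries an $\mathbf{A}_3$ point) one has $\gcd(b_8, v) = 1$, hence $\gcd(D, v) = 1$, and every degree-four factor works after the normalizing rescaling. When $v \mid a_4$ (Picard number $\ge 12$) one writes $a_4 = v\,\tilde a_3$ and $b_8 = v^2(\tfrac14 \tilde a_3^2 - b''_6)$, so that $D = v\,\hat D_3$ with $\hat D_3$ a degree-three factor of $\hat b_6 := \tfrac14 \tilde a_3^2 - b''_6$; a short computation then shows that polynomiality of $c_2$ is the single extra condition fixing the leading coefficient of $\hat D_3$ to $-\tilde a \pm 2\sqrt{b''}$, where $\tilde a, b''$ denote the leading coefficients of $\tilde a_3$ and $b''_6$, and this can always be arranged by the residual scaling of $\hat D_3$. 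The degenerate lattice $L = H \oplus D_8 \oplus D_4$, where additionally $v \mid b''_6$ and the branch sextic of $\mathcal{S}$ is a line plus a quintic, is treated by the same bookkeeping with one further power of $v$ extracted. I expect this factor-selection step — securing simultaneously the divisibility $D \mid b_8$, the $v$-divisibility producing $e_3$, and the integrality of $c_2$ — to be the main obstacle; once it is settled, composing the two projections of $\mathcal{K}$ furnishes the desired birational equivalence between $\mathcal{S}$ and $\mathcal{S}'$.
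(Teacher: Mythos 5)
Your reduction of the statement to the two polynomial identities $a_4 = 2ve_3 - 4h_0d_4$ and $b''_6 = e_3^2 - 4c_2d_4$ is correct (up to harmless factors of $2$ it is the relation recorded in Equation~(\ref{eqn:params_rank11})), and your solution for $\rho_L=11$ --- pick a quartic factor $D$ of $b_8$, rescale so that $D(u,0)=-a_4(u,0)$, set $e_3=(D+a_4)/(2v)$, $d_4=D/(4h_0)$ --- is essentially the paper's own construction in disguise: your $D$ is $4h_0d_4$ in the factorization $b_8=d_4\,d'_4$ of Equations~(\ref{eqn:params1})--(\ref{eqn:params2}), and your key identity $4v^2(e_3^2-b''_6)=(2ve_3-a_4)(2ve_3+a_4)+4b_8$ is exactly why the $c_2$ of (\ref{eqn:params1}) is automatically polynomial. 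The rank-12 computation, including the constraint $\hat d=-\tilde a\pm 2\sqrt{b''}$, also checks out, and in fact your scheme extends (with the right choice of factor) through $\rho_L=13$, $14$ with $\delta_L=1$, and $15$.

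The genuine gap is the blanket claim that when $v\mid a_4$ the leading-coefficient condition ``can always be arranged by the residual scaling of $\hat D_3$,'' and the suggestion that $H\oplus D_8\oplus D_4$ merely needs ``one further power of $v$ extracted.'' Push your own bookkeeping one step: from (1), $h_0\neq 0$ and $v\mid a_4$ force $d_4=v\delta_3$ and $e_3=\tilde a_3/2+2h_0\delta_3$; substituting into (2) and using $\hat b_6:=b_8/v^2=\tilde a_3^2/4-b''_6$ gives $4c_2v\delta_3=2h_0\delta_3(e_3+\tilde a_3/2)+\hat b_6$, so $\delta_3\mid\hat b_6$, say $\hat b_6=\delta_3\gamma_3$, and evaluation at $v=0$ yields
\[ 2h_0\,\tilde a\,u^3+4h_0^2\,\delta_3(u,0)+\gamma_3(u,0)=0. \]
For $\rho_L=16,17,18$ one has $\mathrm{ord}_v(b_8)=6,7,8$ (Lemma~\ref{lem:polys} and Table~\ref{tab:extension0}), hence $\mathrm{ord}_v(\hat b_6)\geq 4$; since $\delta_3,\gamma_3$ are cubics, \emph{both} vanish at $v=0$, and the condition collapses to $h_0\tilde a=0$, impossible because the $D_{2n}$ fiber at $v=0$ has $\mathrm{ord}_v(a_4)=1$ for the general member. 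For $L=H\oplus D_8\oplus D_4$ one has $\tilde a=0$ and $\mathrm{ord}_v(\hat b_6)=1$, so exactly one of $\delta_3(u,0),\gamma_3(u,0)$ vanishes while the condition requires both to --- again no solution. So in these four cases there is \emph{no} choice whatsoever with $h_0\in\mathbb{C}^\times$, and no factor-selection can rescue your argument; the boundary is precisely $\mathrm{ord}_v(b_8)\le 5$ (plus $\tilde a \ne 0$). The paper's proof of Proposition~\ref{prop:double_surface_Sp_rank11ff} handles exactly these lattices by degenerating to $h_0=0$: then $e_3=a_4/(2v)$ is automatic, (2) reduces to factoring the sextic $\hat b_6=4c_2d_4$ into a quadric times a quartic (with the $v$-powers distributed as in the table in that proof), and the conic becomes the pair of lines $w=0$, $v=0$. (You have in effect discovered that the literal hypothesis $h_0\in\mathbb{C}^\times$ in the statement is inconsistent with $\rho_L\ge 16$ and with $H\oplus D_8\oplus D_4$; the paper's own proof quietly uses $h_0=0$ there. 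But as written, your proof does not cover these lattices, so the argument is incomplete where the statement is hardest.)
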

\begin{remark}
In  \cite{Clingher:2022} it was proved that the pencil of lines through $\mathrm{n}$ induces the alternate fibration~(\ref{eqn:vgs_intro}) with
\beq
\label{eqn:params_rank11} 
  a_4 = v e_3(u, v) - 2  h_0 d_4(u, v)\, , \; b_8(u, v)=\frac{1}{4} a_4(u, v)^2 -  v^2 \left(\frac{e_3(u, v)^2}{4} -  c_2(u, v)  d_4(u, v) \right)  .
\eeq
\end{remark}
\begin{proof}
Let us first explain how one constructs the polynomials $c_2, e_3, d_4$ for the general $H\oplus D_4 \oplus A_1^{\oplus 5}$-polarized K3 surface $\mathcal{X}$. As explained before, we have $a_4(u, v)^2/4 - b_8(u, v) = v^2 b''_6(u, v)$ where $b''_6$ is a homogeneous polynomial of degree 6. We choose a factorization of $b_8(u, v) = a_4(u, v)^2/4 - v^2 b''_6(u, v)= d_4(u, v) d'_4(u, v)$ into two homogeneous polynomials of degree 4 such that $d(1,0) \not =0$.  We can then find $h_0 \in \mathbb{C^\times}$ so that 
\beq
\label{eqn:params1}
 e_3(u, v) =\frac{a_4(u, v) + 2 h_0 d_4(u, v)}{v}\, , \quad c_2(u, v) = \frac{d'_4(u, v)  + h_0^2 d_4(u, v) +h_0 a_4(u, v)}{v^2}
\eeq
are polynomials of degree 3 and 2, respectively.  In fact, if we write $a_4 = \sum_{n=0}^4 \alpha_n u^{4-n} v^{n}$, $d_4 = \sum_{n=0}^4  \delta_n u^{4-n} v^{n}$ (with $\delta_0 \not =0$), and $d'_4 = \sum_{n=0}^4  \delta'_n u^{4-n} v^{n}$, we have
\beq
\label{eqn:params2}
 \delta'_0 = \frac{\alpha_0}{4 \delta_0} \,, \quad \delta'_1 = \frac{\alpha_0 (2\alpha_1 \delta_0 - \alpha_0 \delta_1)}{4 \delta_0^2} \,, \quad  h_0 = - \frac{\alpha_0}{2 \delta_0} .
\eeq 
A surface $\mathcal{S}'$ is then obtained by using $h_0, c_2, e_3, d_4$ in Equation~(\ref{eqn:double_sextic11_polys}).  Moreover, the case $h_0=0$ is equivalent to $\alpha_0=0$ and $d'_4 = v^2 c_2$, so that we have $a_4=v e_3$ and $b_8=v^2 c_2 d_4$. The explicit birational transformations between $\mathcal{X}$ and $\mathcal{S}'$ are
\beq
\label{eqn:birational1p}
w = \frac{vX}{(c_2 v^2 - h_0 e_3 v + h_0^2 d_4) Z - h_0X}\, , \qquad \tilde{y}= \frac{v (c_2 v^2 - h_0 e_3 v + h_0^2 d_4)  YZ}{((c_2 v^2 - h_0 e_3 v + h_0^2 d_4) Z - h_0X)^2}\, ,
\eeq
and
\beq
\label{eqn:birational2p}
X =  \frac{ (c_2 v^2 - h_0 e_3 v + h_0^2 d_4) w}{v + h_0w} Z\, , \qquad Y= \frac{ (c_2 v^2 - h_0 e_3 v + h_0^2 d_4) v \tilde{y}}{(v + h_0w)^2} Z\,,
\eeq
respectively. 
\par The lattice polarization extends to the lattice $H\oplus D_6 \oplus A_1^{\oplus 4}$ if and only if $a_4(u, v)=v e_3(u, v)$ and $b_8(u, v)=v^2 c_2(u, v) d_4(u, v)$. For any other decomposition $b_8(u, v) = v^2 \tilde{c}_2(u, v) \tilde{d}_4(u, v)$ the map $[ \tilde{u}: \tilde{v}: \tilde{w}: \tilde{\tilde{y}}] = [   \tilde{c}_2(u, v) u:  \ \tilde{c}_2(u, v) v : \  c_2(u, v) w :\  c_2(u, v) \tilde{c}_2(u, v)^2 \tilde{y}  ]$ transforms the equation
\beq
    \tilde{\tilde{y}}^2 =  \tilde{w}  \tilde{v}  \Big( \tilde{c}_2(\tilde{u}, \tilde{v}) \, \tilde{w}^2 + e_3(\tilde{u}, \tilde{v}) \, \tilde{w} + \tilde{d}_4(\tilde{u}, \tilde{v}) \Big) \,.
\eeq
into $\mathcal{S}'$ in Equation~(\ref{eqn:double-sextics_prime_general_intro}) using the polynomials in Equation~(\ref{eqn:double_sextic11_polys}) with $h_0=0$.  Moreover, Equations~(\ref{eqn:params1}) and~(\ref{eqn:params2}) and $b = d_4 \cdot d'_4$ determine $h_0 \in \mathbb{C}^\times$ and polynomials $c_2, e_3, d_4$ for $\rho_L = 11, 12, 13$ and $L= H \oplus D_8 \oplus D_4$. For $L= H \oplus E_8 \oplus A_1^{\oplus 4}$ and $\rho_L=15, 16, 17, 18$ we write $a_4=v e_3$ and $b_8=v^2 c_2 d_4$.  In these cases the Picard number is $\rho_L = 10+n$, $n=4, \dots, 8$, and the vanishing degree of $b_8$ at $v=0$ is $n$. One then has to make some additional choices: We distribute the repeated root of $b_8$ at $v=0$ between the polynomials $c_2$ and $d_4$ as follows:
\beqn
 \begin{array}{c|ccc}
 \text{rank} & c_2 & d_4 & b\\
  \hline
   14	& O(v^0) & O(v^2) & O(v^4)\\
 15 	& O(v^0) & O(v^3) & O(v^5)\\
 16	& O(v^1) & O(v^3) & O(v^6)\\
 17	& O(v^1) & O(v^4) & O(v^7)\\
 18	& O(v^2) & O(v^4) & O(v^8)\\
 \hline
 \end{array}
\eeqn
By using these polynomials $c_2, e_3, d_4$ and $h_0=0$ we obtain a birational double sextic $\mathcal{S}'$ as claimed.
\end{proof}
\par For $\mathcal{S}'$ in Proposition~\ref{prop:double_surface_Sp_rank11ff} we consider the associated quartic hypersurface $\mathcal{K}$ in Equation~(\ref{eqn:quartic_general_intro}) using the same polynomials in~(\ref{eqn:double_sextic11_polys}).  We have the following:
\begin{corollary}
\label{prop:regular}
Let $\mathcal{X}$ be a general K3 surface with $\mathrm{NS}(\mathcal{X}) \cong L$, for $L$ in Equation~(\ref{eqn:lattices}) with $\rho_L =10+n$, $1 \le n \le 8$. There are $h_0 \in \mathbb{C}^\times$ and polynomials $c_2, e_3, d_4$ such that $\mathcal{X}$ is isomorphic to the minimal resolution of the quartic $\mathcal{K}$ given by
\beq
\label{eqn:quartic_general_rank_ge11}
\mathcal{K}\colon \quad  w  \big( v + h_0 w  \big)  y^2 =  c_2(u, v) \, w^2 + e_3(u, v) \, w + d_4(u, v) \, .
\eeq
Moreover, $\mathcal{K}$ has two rational double-point singularities at
\beq
\label{eqn:singular_points}
   \mathrm{p}_1\colon \ [u: v: w : y]= [0: 0: 0: 1] \,, \qquad   \mathrm{p}_2\colon \  [u: v: w : y]= [0 : 0: 1: 0] \,,
\eeq  
of the type indicated in Table~\ref{tab:sings_on_K_and_S}.
\end{corollary}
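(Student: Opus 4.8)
The plan is to obtain the birational identification by concatenating maps that are already available, and then to spend the real effort on the two local singularity computations. First I would assemble the chain: Proposition~\ref{prop:double_surface_S_rank11ff} identifies $\mathcal{X}$ with the minimal resolution of the canonical double sextic $\mathcal{S}$ of Equation~(\ref{eqn:double-sextics_general_rang_ge11}); Proposition~\ref{prop:double_surface_Sp_rank11ff} then produces $h_0\in\mathbb{C}^\times$ and polynomials $c_2,e_3,d_4$ for which the double sextic $\mathcal{S}'$ of Equation~(\ref{eqn:double-sextics_prime_general_rank_ge11}) is birational to $\mathcal{S}$; and the substitution $\tilde{y}=C(u,v,w)\,y$ with $C=w(v+h_0w)$, recorded after Equation~(\ref{eqn:quartic_general_intro}), is a birational equivalence between $\mathcal{S}'$ and the quartic $\mathcal{K}$ of Equation~(\ref{eqn:quartic_general_rank_ge11}). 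Composing these maps shows that $\mathcal{X}$ is isomorphic to the minimal resolution of $\mathcal{K}$, and since the resolution is the K3 surface $\mathcal{X}$, once I exhibit the singularities of $\mathcal{K}$ as $\mathbf{A}_k$ points they will automatically be rational double points and account for all of the singular locus.

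For the singular locus itself I would compute $\nabla F=0$ directly from $F=w(v+h_0w)y^2-c_2w^2-e_3w-d_4$, verifying for generic coefficients that $\mathcal{K}$ is smooth away from the line $u=v=0$ and that the only critical points on that line are $\mathrm{p}_1=[0:0:0:1]$ and $\mathrm{p}_2=[0:0:1:0]$. To read off the types I would treat the two points by different reductions. At $\mathrm{p}_2$, in the chart $w=1$ the factor $v+h_0$ is a local unit, so the analytic change $y\mapsto y/\sqrt{v+h_0}$ turns the equation into a double cover $y^2=c_2(u,v)+e_3(u,v)+d_4(u,v)$; its rational double point is of type $\mathbf{A}_k$ precisely when the plane branch curve on the right has an $\mathbf{A}_k$ singularity at the origin, and the value of $k$ is controlled by whether the leading form $c_2$ is nondegenerate or a perfect square, i.e.\ by the vanishing order of $c_2$ at $v=0$ prescribed in the proof of Proposition~\ref{prop:double_surface_Sp_rank11ff}. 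At $\mathrm{p}_1$, in the chart $y=1$ the quadratic part of $F$ is the rank-two form $w(v+h_0w)$, so I would apply the splitting lemma to absorb the $v,w$ directions into a hyperbolic pair $\xi\eta$ and isolate the residual one-variable term $g(u)$; solving $\partial_vF=\partial_wF=0$ for $v,w$ as power series in $u$ and substituting shows that $g(u)$ is governed by $d_4(u,0)$ together with the successively higher $v$-divisibilities of $e_3,d_4$, so that its vanishing order fixes the $\mathbf{A}_k$ type. Matching both computations against the prescribed vanishing data $c_2=O(v^0),O(v^1),O(v^2)$ and $d_4=O(v^2),O(v^3),O(v^4)$ reproduces exactly the entries of Table~\ref{tab:sings_on_K_and_S}.

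The reducible case $L=H\oplus D_8\oplus D_4$ must be handled separately, since there (as in Proposition~\ref{prop:double_surface_S_rank11ff}) the branch locus degenerates and the polynomials pick up an extra factor of $v$; I would substitute the corresponding factorized forms and rerun the same two local analyses. The main obstacle is not the birational bookkeeping, which is essentially immediate, but the honest singularity analysis at $\mathrm{p}_1$: because the conic $C=w(v+h_0w)$ is reducible, its quadratic part drops to rank two, so---unlike the single node governing $\mathrm{p}_2$, whose quadratic part stays nondegenerate---one genuinely needs the splitting lemma together with careful control of the $u$-dependence of the residual term across every degeneration in order to certify each tabulated $\mathbf{A}_k$ type exactly rather than merely bound it.
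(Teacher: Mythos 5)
Your global skeleton coincides with the paper's proof: the paper also obtains $\mathcal{K}$ by chaining Proposition~\ref{prop:double_surface_S_rank11ff}, Proposition~\ref{prop:double_surface_Sp_rank11ff}, and the identification $\tilde{y}=C(u,v,w)\,y$, deduces that the minimal resolution of $\mathcal{K}$ is $\mathcal{X}$ (hence only rational double points), and compresses the rest into ``an explicit computation.'' Your local analysis at $\mathrm{p}_1$ (rank-two quadratic part $w(v+h_0w)$, splitting lemma, residual one-variable term $g(u)$) is sound in outline and matches what that computation must do.

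The genuine gap is your treatment of $\mathrm{p}_2$. You declare that in the chart $w=1$ the factor $v+h_0$ is a local unit, reduce to a double cover $y^2=c_2+e_3+d_4$, and conclude that $\mathrm{p}_2$ is always an $\mathbf{A}_k$ point whose index is read off from the degeneracy of $c_2$; you even assert that the quadratic part at $\mathrm{p}_2$ ``stays nondegenerate.'' But the constructions in the proof of Proposition~\ref{prop:double_surface_Sp_rank11ff} force $h_0=0$ in precisely the cases you calibrate against: for $L=H\oplus E_8\oplus A_1^{\oplus 4}$ and $\rho_L=15,\dots,18$ one sets $a_4=ve_3$, $b_8=v^2c_2d_4$ with $h_0=0$, and the vanishing data $c_2=O(v^0),O(v^1),O(v^2)$, $d_4=O(v^2),O(v^3),O(v^4)$ that you quote is exactly that $h_0=0$ table. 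With $h_0=0$ the local equation at $\mathrm{p}_2$ is $vy^2=c_2(u,v)+e_3(u,v)+d_4(u,v)$: the $2$-jet contains no $y^2$ term, the Hessian has rank at most $2$ (rank $1$ for $\rho_L=18$, where $c_2=\gamma v^2$), the unit trick is unavailable, and $\mathrm{p}_2$ requires the same degenerate normal-form analysis you reserve for $\mathrm{p}_1$. Worse, the answer can leave the $A$-series: for $\rho_L=18$ one finds $vy^2-\gamma v^2-e_3-\delta v^4\sim \tilde{v}^2+u^3+y^4$, i.e.\ a singularity of type $\mathbf{E}_6$ --- exactly the entry of Table~\ref{tab:sings_on_K_and_S} that the corollary asserts and that the paper flags in the remark after Theorem~\ref{thm_rank_ge11}. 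Your plan as written (``exhibit the singularities of $\mathcal{K}$ as $\mathbf{A}_k$ points'') would therefore prove something false. Nor can you rescue it by insisting on $h_0\neq 0$ as in the corollary's literal wording: running your branch-curve dichotomy on the quoted vanishing data with $v+h_0$ a unit would yield $\mathbf{A}_1$ at $\rho_L=16$ and $\mathbf{A}_2$ at $\rho_L=18$ at $\mathrm{p}_2$, not the tabulated $\mathbf{A}_5$ and $\mathbf{E}_6$, so the final ``matching'' step fails either way. (A minor further slip: the case needing separate polynomials is not really $L=H\oplus D_8\oplus D_4$, where $h_0\in\mathbb{C}^\times$ per the paper's proof and the line-plus-quintic degeneration concerns $\mathcal{S}$ rather than $\mathcal{S}'$; the genuinely exceptional branch is the $h_0=0$ family $\rho_L\geq 14$ in the $(4,1)$ chain through $\rho_L=18$.)
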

\begin{remark}
In Table~\ref{tab:sings_on_K_and_S} we show the types of singularities determined in Corollary~\ref{prop:regular}. We also include the results of Lemmas~\ref{lem:sings_on_S_rank10},~\ref{lem:sings_K_rank10}.

\end{remark}
\begin{table}
\begin{tabular}{l|c||c||cc}
$\rho_L$	& $(\ell_L, \delta_L)$ & singularity on $\mathcal{S}$	& \multicolumn{2}{l}{singularities on $\mathcal{K}$} \\
&& at $\mathrm{p}$			& \quad at $\mathrm{p}_2$	& at $\mathrm{p}_1$ \\
\hline
\hline
 9		& $(9,1)$ & none				&  \multicolumn{2}{l}{$\mathcal{S}$ and $\mathcal{K}$ not isomorphic} \\
 9		& $(7,1)$ &n/a					& --					& ${\bf A}_1$ \\
 \hline
10		& $(8,1)$& ${\bf A}_1$ 			& ${\bf A}_1$ 			& ${\bf A}_1$ \\
 \hline
11		& $(7,1)$& ${\bf A}_3$			& ${\bf A}_1$ 			& ${\bf A}_3$ \\
12		& $(6,1)$& ${\bf A}_5$			& ${\bf A}_3$ 			& ${\bf A}_3$ \\
13		& $(5,1)$& ${\bf A}_7$			& ${\bf A}_3$ 			& ${\bf A}_5$ \\
14		& $(4,0)$& ${\bf A}_9$			& ${\bf A}_3$ 			& ${\bf A}_7$ \\
14		& $(4,1)$& ${\bf A}_9$			& ${\bf A}_3$ 			& ${\bf A}_7$ \\
15		& $(3,1)$& ${\bf A}_{11}$			& ${\bf A}_3$ 			& ${\bf A}_9$ \\
16		& $(2,1)$& ${\bf A}_{13}$			& ${\bf A}_5$ 			& ${\bf A}_9$ \\
17		& $(1,1)$& ${\bf A}_{15}$			& ${\bf A}_5$ 			& ${\bf A}_{11}$ \\
18		& $(0,0)$ & ${\bf A}_{17}$			& ${\bf E}_6$ 			& ${\bf A}_{11}$\\
\hline
\end{tabular}
\captionsetup{justification=centering}
\caption{Rational double points on $\mathcal{S}$ and  $\mathcal{K}$}
\label{tab:sings_on_K_and_S}
\end{table}
\begin{proof}
Because of Proposition~\ref{prop:double_surface_S_rank11ff}, there is a double sextic $\mathcal{S}$ whose minimal resolution is isomorphic to $\mathcal{X}$. As explained above we constructed from $\mathcal{S}$ a birational double sextic $\mathcal{S}'$. It follows that the minimal resolution of the associated quartic $\mathcal{K}$ is isomorphic to $\mathcal{X}$. In particular, $\mathcal{K}$ only has rational double points. An explicit computation shows that these are the ones listed in Table~\ref{tab:sings_on_K_and_S}. 
\end{proof}
For every lattice $L$ in Equation~(\ref{eqn:lattices}) with $\rho_L =10+n$, $1 \le n \le 8$, the dual graph of smooth rational curves on a general K3 surface $\mathcal{X}$ with $\mathrm{NS}(\mathcal{X}) = L$ is shown in Figures~\ref{fig:pic11}-\ref{fig:pic18}. They were constructed by the third author in \cite{Roulleau22}. Note that for simplicity for rank $\rho_L < 14$ the graphs do not show \emph{all} rational curves on $\mathcal{X}$.  The complete graphs can be found in \cite{Clingher:2022}. However, each graph shows a complete set of generators of $\mathrm{NS}(\mathcal{X})$, denoted by $\{ A_n \}$.  For each lattice $L$ a divisor  $\mathpzc{D}_2 \in \mathrm{NS}(\mathcal{X})$ is defined in Table~\ref{tab:D2_divisors} in terms of these generators; similarly, divisors $\{ \mathrm{a}_1, \dots, \mathrm{a}_M \}$ and $\{ \mathrm{b}_1, \dots, \mathrm{b}_N \}$ are defined in Table~\ref{tab:except_divisors}. (The hat symbol means that the particular element is not included in a sequence.)  In Tables~\ref{tab:D2_divisors} and~\ref{tab:except_divisors} we also included the results of Propositions~\ref{prop:symmetry_rank10b} and~\ref{prop_divisor_rank10}.
\begin{table}
\begin{tabular}{l|c||l}
$\rho_L$ 	& $(\ell_L, \delta_L)$ &  $\mathpzc{D}_2$ \\
\hline
\hline
10	& $(8,1)$ &$3 \mathrm{f}_1 + 3 \mathrm{f}_2 - \sum_{i=1}^8 \mathrm{e}_i$\\
\hline
11	&  $(7,1)$	&$A_{1}+2A_{2}+A_{3}+A_{4}$\\
12	&  $(6,1)$	&$A_6 + A_7 + 3 A_8 + 2 A_9 + 2 A_{10} + A_{11} + A_{12}$\\
13	&  $(5,1)$	&$A_4 + A_6 + A_7 + A_8 + A_{10} + A_{11} + A_{12} + A_{16} + A_{18}$\\
14	&  $(4,1)$	&$A_1 + 2 A_2 + 3 A_3 + 4 A_4 + 2 A_5 + 5 A_6 + 4 A_7 + 3 A_8 + 2 A_9 + A_{10}$\\
14  	&  $(4,0)$	&$A_1 + 2 A_2 + 3 A_3 + A_4 + 3 A_5 + \dots + 3 A_8 + A_9 + 2 A_{10} + A_{11}$\\
15	&  $(3,1)$	&$A_1 + 2 A_2 + 3 A_3 + A_4 + 3 A_5 + \dots +  3 A_{10} + A_{11} + 2 A_{12} + A_{13}$ \\
16	&  $(2,1)$	&$A_1 + 2 A_2 + 3 A_3 + A_4 + 3 A_5 + \dots + 3 A_{12} + A_{13} + 2 A_{14} + A_{15}$ \\
17	&  $(1,1)$	&$A_2 + \dots + A_{17} + A_{19}$ \\
18    &  $(0,0)$	&$A_1 + 2 A_2 + 3 A_3 + A_4 + 3 A_5 + \dots + 3 A_{15} + A_{16} + 2 A_{17} + A_{18} + 3 A_{19}$\\
\hline
\end{tabular}
\captionsetup{justification=centering}
\caption{Nef divisors for $\mathcal{X}$ for $\mathrm{NS}(\mathcal{X}) = L$}
\label{tab:D2_divisors}
\end{table}
\begin{table}
\begin{tabular}{l|c||l|l|l}
$\rho_L$ 	& $(\ell_L, \delta_L)$ & $(N, M)$ &  $\{\mathrm{b}_1, \dots, \mathrm{b}_N\}$ & $\{\mathrm{a}_1, \dots, \mathrm{a}_M\}$\\
\hline
\hline
10	& $(8,1)$ & $(1,1)$ & $\{\mathrm{e}_8\}$					& $\{- \mathrm{f}_1 + \mathrm{f}_2\}$\\
\hline
11	&  $(7,1)$	& $(1,1)$ &$\{2 A_1 - A_4 + A_5 + \dots +A_8\}$	& $\{A_1 \}$\\
12	&  $(6,1)$	& $(3,3)$ &$\{A_{11}, A_{18}, A_{19}\}$			& $\{A_8, A_9, A_{12}\}$\\
13	&  $(5,1)$	& $(3,5)$ &$\{A_{12}, A_{13}, A_{14}\}$			& $\{A_6, \dots, \widehat{A_9}, \dots, A_{11}\}$\\
14	&  $(4,0)$	& $(3,7)$ &$\{A_{10}, A_{11}, A_{12}\}$			& $\{A_1, \dots, \widehat{A_5}, \dots, A_8\}$\\
14	&  $(4,1)$ & $(3,7)$ &$\{A_{11}, A_{12}, A_{15}\}$			& $\{A_1, \dots, \widehat{A_4}, \dots, A_8\}$\\
15	&  $(3,1)$	& $(3,9)$ &$\{A_{13}, A_{14}, A_{16}\}$			& $\{A_1, \dots, \widehat{A_4}, \dots, A_{10}\}$\\
16	&  $(2,1)$	& $(5,9)$ 	&$\{A_{12}, \dots , A_{16}\}$			& $\{A_1, \dots, \widehat{A_4}, \dots, A_{10}\}$\\
17	&  $(1,1)$	& $(5,11)$ &$\{A_1, \dots, A_5\}$				& $\{A_7, \dots, A_{16}, A_{19}\}$\\
18    &  $(0,0)$	& $(6,11)$ &$\{A_{13}, \dots, A_{18}\}$			& $\{A_1, \dots, \widehat{A_4}, \dots, A_{11}, A_{19}\}$ \\
\hline
\end{tabular}
\captionsetup{justification=centering}
\caption{Exceptional divisors for the singularity at $\mathrm{p}_2$ and  $\mathrm{p}_1$ on $\mathcal{K}$}
\label{tab:except_divisors}
\end{table}
\newpage
\par We have the following:
\begin{theorem} \leavevmode
\label{thm_rank_ge11}
\begin{enumerate}
\item In Table~\ref{tab:D2_divisors}  each divisor $\mathpzc{D}_2 \in \mathrm{NS}(\mathcal{X})$ is nef,  base-point free, of square 2, and invariant under $\imath_\mathcal{X}$. 
\item In Table~\ref{tab:except_divisors} the divisors $\mathrm{a}_1, \dots, \mathrm{a}_M$ and $\mathrm{b}_1, \dots, \mathrm{b}_N  \in \mathrm{NS}(\mathcal{X})$ are exceptional divisors for the singularity at $\mathrm{p}_1$ and $\mathrm{p}_2$ on $\mathcal{K}$ in Equation~(\ref{eqn:quartic_general_rank_ge11}).
\item The polarization divisor of $\mathcal{K}$ in Equation~(\ref{eqn:quartic_general_rank_ge11}) is $\mathpzc{H} = \mathpzc{D}_2 + \mathrm{b}_1 + \dots + \mathrm{b}_N$.
\item  After a suitable choice of coordinates, for the linear system $\vert \mathpzc{D}_2 \vert$ the corresponding rational map is the projection onto $\mathbb{P}(u, v, w)$ in Equation~(\ref{eqn:double-sextics_general_rang_ge11}).
\item  $\mathpzc{D}'_2 = \mathpzc{H} - \mathrm{a}_1 - \dots - \mathrm{a}_M \in \mathrm{NS}(\mathcal{X})$ is nef and of square 2. After a suitable choice of coordinates, for the linear system $|\mathpzc{D}'_2|$ the corresponding rational map is the projection onto $\mathbb{P}(u, v, w)$ in Equation~(\ref{eqn:double-sextics_prime_general_rank_ge11}).
\end{enumerate}
\end{theorem}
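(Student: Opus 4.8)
The plan is to treat the statement lattice-by-lattice, reducing every assertion to a finite intersection computation on the dual graph of smooth rational curves recorded in Figures~\ref{fig:pic11}--\ref{fig:pic18} (using the complete configurations from \cite{Clingher:2022} where the displayed graphs are abbreviated). The two geometric anchors are the double sextic $\mathcal{S}$ of Proposition~\ref{prop:double_surface_S_rank11ff} and the quartic $\mathcal{K}$ of Corollary~\ref{prop:regular}, together with the two projections from the Remarks following Theorem~\ref{thm2_intro}: the projection of $\mathcal{K}$ from $\mathrm{p}_2$ recovers $\mathcal{S}$, while the projection from $\mathrm{p}_1$ recovers $\mathcal{S}'$. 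Throughout I use that $\imath_\mathcal{X}$ is the van Geemen-Sarti involution, whose form on $\mathcal{S}$ is $[u:v:w:y]\mapsto[u:v:-w:-y]$ by Proposition~\ref{prop:double_surface_S_rank11ff}(3).

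For parts (1) and (4), I would first establish (4) and then read off the analytic content of (1) from it. The composition $\mathcal{X}\to\mathcal{S}\to\mathbb{P}(u,v,w)$, where $\mathcal{X}\to\mathcal{S}$ is the minimal resolution and $\mathcal{S}\to\mathbb{P}(u,v,w)$ is the double cover $\pi_\mathcal{S}$, is a genuine degree-two morphism; hence the class $\pi_\mathcal{S}^*\mathcal{O}(1)$ is base-point free with $(\pi_\mathcal{S}^*\mathcal{O}(1))^2=\deg(\pi_\mathcal{S})\cdot\mathcal{O}(1)^2=2$, and being the pullback of an ample class it is nef. It remains to identify this class with the $\mathpzc{D}_2$ of Table~\ref{tab:D2_divisors}; this identification is exactly what was carried out in \cite{Roulleau22} in terms of the same generators $\{A_n\}$, and I would verify it by matching the intersection numbers $\mathpzc{D}_2\cdot A_n$ against the graph and checking $\mathpzc{D}_2^2=2$ directly, exactly as in the rank-ten Propositions~\ref{prop:symmetry_rank10b} and~\ref{prop_divisor_rank10}. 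Invariance under $\imath_\mathcal{X}$ is then automatic: $\imath_\mathcal{X}$ descends through $\pi_\mathcal{S}$ to the linear involution $[u:v:w]\mapsto[u:v:-w]$ of $\mathbb{P}^2$, which fixes the hyperplane class, so $\imath_\mathcal{X}^*\mathpzc{D}_2=\imath_\mathcal{X}^*\pi_\mathcal{S}^*\mathcal{O}(1)=\pi_\mathcal{S}^*\mathcal{O}(1)=\mathpzc{D}_2$; alternatively one checks $\imath_\mathcal{X}^*\mathpzc{D}_2=\mathpzc{D}_2$ by the explicit lattice computation once the permutation of the $A_n$ induced by the $2$-torsion translation is known.

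For parts (2), (3) and (5) I would work on the resolution $\mathcal{X}\to\mathcal{K}$. The exceptional loci over the two rational double points realize the singularity types of Table~\ref{tab:sings_on_K_and_S}; tracking the birational maps of Corollary~\ref{prop:regular} (and Proposition~\ref{prop:double_surface_Sp_rank11ff}) through these resolutions identifies the contracted curves with the explicit classes $\{\mathrm{b}_1,\dots,\mathrm{b}_N\}$ over $\mathrm{p}_2$ and $\{\mathrm{a}_1,\dots,\mathrm{a}_M\}$ over $\mathrm{p}_1$ of Table~\ref{tab:except_divisors}, and one verifies they form the asserted ADE configurations. Writing $\mathpzc{H}$ for the pullback of the quartic polarization, so that $\mathpzc{H}^2=4$ and $\mathpzc{H}\cdot\mathrm{a}_i=\mathpzc{H}\cdot\mathrm{b}_j=0$, the relation $\mathpzc{H}=\mathpzc{D}_2+\mathrm{b}_1+\dots+\mathrm{b}_N$ of part (3) expresses that projecting $\mathcal{K}$ from $\mathrm{p}_2$ removes precisely the fixed curves over $\mathrm{p}_2$ and returns the double sextic morphism of part (4); I would confirm it by checking $\mathpzc{H}\cdot A_n=(\mathpzc{D}_2+\sum_j\mathrm{b}_j)\cdot A_n$ on all generators. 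For part (5) the same discussion with $\mathrm{p}_1$ in place of $\mathrm{p}_2$ shows that $\mathpzc{D}'_2=\mathpzc{H}-\mathrm{a}_1-\dots-\mathrm{a}_M$ is the pullback of $\mathcal{O}(1)$ under the projection from $\mathrm{p}_1$ onto the $\mathbb{P}(u,v,w)$ of $\mathcal{S}'$, hence nef; and since the $\mathrm{a}_i$ span a tree (every ADE Dynkin diagram has one fewer edge than vertices), one gets $(\sum_{i=1}^M\mathrm{a}_i)^2=-2M+2(M-1)=-2$, so that $(\mathpzc{D}'_2)^2=\mathpzc{H}^2+(\sum_i\mathrm{a}_i)^2=4-2=2$.

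The \emph{main obstacle} will be the combinatorial bookkeeping rather than any single conceptual point: for each lattice one must fix the precise dictionary between the abstract generators $\{A_n\}$ of \cite{Roulleau22} and the geometric curves --- which of them resolve $\mathrm{p}_1$, which resolve $\mathrm{p}_2$, and how the $2$-torsion translation $\imath_\mathcal{X}$ permutes them --- before the intersection checks become routine. This is delicate for $\rho_L<14$, where the displayed graphs omit curves and one must import the full configurations from \cite{Clingher:2022} to certify nefness (the condition $\mathpzc{D}_2\cdot C\ge 0$ for every one of the finitely many $(-2)$-curves $C$), and in the case $\rho_L=18$, where the singularity at $\mathrm{p}_2$ is of type $\mathbf{E}_6$ rather than type $\mathbf{A}$, so that the chain picture used elsewhere must be replaced by the $\mathbf{E}_6$ tree --- although the square computation above applies verbatim, since it uses only that the configuration is a tree.
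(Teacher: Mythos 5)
Your proposal follows essentially the paper's own route: part (1) rests on Roulleau's construction of the divisors $\mathpzc{D}_2$ together with a check of $\imath_\mathcal{X}$-invariance, parts (2)--(3) on identifying the exceptional curves over $\mathrm{p}_1$ and $\mathrm{p}_2$ inside the dual graphs, and parts (4)--(5) on the geometry of the projection with center $\mathrm{p}_2$ (resp.\ $\mathrm{p}_1$), which is exactly the paper's diagram argument $\phi^*(\ell)=\psi^*(Q)+\mathrm{b}_1+\dots+\mathrm{b}_N$. Your local variations are sound and in places cleaner than the paper's citations: deriving base-point freeness, nefness and $\mathpzc{D}_2^2=2$ directly from $\mathpzc{D}_2=\pi_\mathcal{S}^*\mathcal{O}(1)$ for the generically two-to-one morphism $\mathcal{X}\to\mathbb{P}^2$, and obtaining invariance from the equivariance $\pi_\mathcal{S}\circ\imath_\mathcal{S}=g\circ\pi_\mathcal{S}$ with $g\colon[u:v:w]\mapsto[u:v:-w]$, rather than the paper's bare ``explicit lattice computation''; likewise your tree computation $\bigl(\sum_i\mathrm{a}_i\bigr)^2=-2$ correctly gives $(\mathpzc{D}'_2)^2=4-0-2=2$, since by Table~\ref{tab:sings_on_K_and_S} the $\mathrm{a}_i$ over $\mathrm{p}_1$ always form an $\mathbf{A}_M$-chain.

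One point needs repair. For $\rho_L=18$ the singularity at $\mathrm{p}_2$ is of type $\mathbf{E}_6$, and there the relation in part (3) holds only with fundamental-cycle multiplicities, $\mathpzc{H}=\mathpzc{D}_2+\mathrm{b}_1+2\mathrm{b}_2+3\mathrm{b}_3+2\mathrm{b}_4+2\mathrm{b}_5+\mathrm{b}_6$, as the paper records in the remark following the theorem. Your plan to confirm (3) by checking $\mathpzc{H}\cdot A_n=(\mathpzc{D}_2+\sum_j\mathrm{b}_j)\cdot A_n$ on all generators would therefore fail at rank 18 as literally stated: the uniform statement is $\mathpzc{H}=\mathpzc{D}_2+Z$, where $Z$ is the fundamental cycle of the resolution at $\mathrm{p}_2$ (reduced precisely in the $\mathbf{A}$-type cases, with $Z^2=-2$ in all cases, so the square count is unchanged), and the pullback formula $\psi^*(H)=\widetilde{H}+Z$ for a generic plane $H$ through the double point is what forces these coefficients. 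Your closing caveat also misplaces the issue: the square computation it defends belongs to part (5), whose curves $\mathrm{a}_i$ form an $\mathbf{A}_{11}$-chain even at rank 18 and are unaffected by the $\mathbf{E}_6$ anomaly; it is the verification of (3) and the corresponding multiplicity bookkeeping in your intersection checks that must be adjusted. With that correction, and with the dictionary between the abstract generators $A_n$ and the geometric curves imported from \cite{Clingher:2022} as you propose, the argument goes through and matches the paper's proof in substance.
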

\begin{remark}
For $\rho_L = 18$ the singularity at $\mathrm{p}_2$ is of type ${\bf E}_6$, and not of type ${\bf A}_n$ as in all other cases. In this case, the precise relation between divisors in Theorem~\ref{thm_rank_ge11}~(3) is $\mathpzc{H} = \mathpzc{D}_2 + \mathrm{b}_1 + 2 \mathrm{b}_2 + 3 \mathrm{b}_3 + 2 \mathrm{b}_4 + 2 \mathrm{b}_5 + \mathrm{b}_6$ for $\mathrm{b}_i = A_{12+i}$ with $i=1, \dots, 6$.
\end{remark}
\begin{remark}
The divisor $\mathpzc{D}'_2$ is not invariant  under $\imath_\mathcal{X}$. 
\end{remark}
\begin{proof}
To prove claim~(1) we note that the divisors $\mathpzc{D}_2$ in Table~\ref{tab:D2_divisors} were constructed by Roulleau in \cite{Roulleau22} who also proved that they are nef,  base-point free, of square 2. Clingher and Malmendier~\cite{Clingher:2022} constructed the unique embedding of the alternate fibration into the dual graphs and the action of the van Geemen-Sarti involution.  One then easily checks that the divisors $\mathpzc{D}_2$ in Table~\ref{tab:D2_divisors} are invariant under this action. To prove claims~(2) and~(3), we note that the divisors $\mathrm{a}_1, \dots, \mathrm{a}_M$ and $\mathrm{b}_1, \dots, \mathrm{b}_N$ and $\mathcal{K}$ were identified in \cites{Clingher:2022,Clingher:2020baq, MR4544843, MR4160930}. Comparing these divisors with the labels in Figures~\ref{fig:pic11}-\ref{fig:pic18} yields the statement.
\par Claim~(4) follows from the following geometric argument: Let $\mathcal{K}$ be the quartic surface in $\mathbb{P}^3$, with two rational double point singularities at $\mathrm{p}_1$ and $\mathrm{p}_2$. Let $\mathcal{X}$ be the smooth K3 surface obtained after one performs desingularization at $\mathrm{p}_1$ and $\mathrm{p}_2$, with $\psi \colon \mathcal{X} \rightarrow \mathbb{P}^3$ being the associated desingularization map. Consider then the projection from $\mathrm{p}_2$. This is a rational map $\pi \colon \mathbb{P}^3 \dashrightarrow \mathbb{P}^2 $. The target projective space may be seen as the space of lines in $\mathbb{P}^3$ passing through $\mathrm{p}_2$. The restriction of $\pi$ on $\mathcal{K} - \{\mathrm{p}_2\} $ is a morphism and the generic fiber of this restriction is given by the two points obtained as the residual intersection of the associated line through $\mathrm{p}_2$ with $\mathcal{K} - \{\mathrm{p}_2\} $.  The composition $\psi \circ \left ( \pi \vert _{\mathcal{K} - \mathrm{p}_2} \right ) $ extends then to a morphism $\phi \colon  \mathcal{X} \rightarrow \mathbb{P}^2 $ which recovers the canonical double sextic structure.    
\beq
\label{diaggl7}
\def\objectstyle{\scriptstyle}
\def\labelstyle{\scriptstyle}
\xymatrix @-0.9pc  {
 \mathcal{X} \ar @{->} [rr] ^{\psi} \ar @{->} [rrdd] _{\phi}  & & \mathbb{P}^3 \ar @{-->} [dd] ^{\pi} \\
  & & \\
    & & \mathbb{P}^2  \\
} 
\eeq
The branch locus of $\phi$ corresponds to the compactification of the set of lines through $\mathrm{p}_2$ that either pass through $\mathrm{p}_1$ or are tangent to $\mathcal{K} - \{ \mathrm{p}_2\} $. 
\par Now, in the context of the diagram~(\ref{diaggl7}), the linear system associated to the map $\psi$ is $\vert \mathpzc{H} \vert $, and consists of the pullbacks $ \psi^*(H)$ where $H$ is a plane in $\mathbb{P}^3$.  A generic member of 
$\vert \mathpzc{H} \vert $ is a plane degree-four curve obtained by intersection a generic plane of $\mathbb{P}^2$ with $\mathcal{K}$. Special members of $\vert \mathpzc{H} \vert $ can be obtained by considering $ \psi^*(H)$, where $H$ is a plane in $\mathbb{P}^3$ passing through $\mathrm{p}_2$. Such a plane corresponds, in the context of the diagram, to a line in $\mathbb{P}^2$, and one has $\psi^*(H) = \phi^*(\ell)$. Consider a generic line $\ell$ in $\mathbb{P}^2$. Then $ \left ( \pi \vert _{\mathcal{K} - \mathrm{p}_2} \right )^*(\ell) $ corresponds to a plane curve $Q$ lying on $\mathcal{K}$ and $\vert \psi^*(Q) \vert $ is the linear system associated to the map $\phi \colon \mathcal{X} \rightarrow \mathbb{P}^2$. Moreover, via the desingularization map $\psi$, one obtains: 
\beqn
  \phi^*(\ell) = \psi^*(Q) + \mathrm{b}_1 + \dots + \mathrm{b}_N \, . 
\eeqn  
Therefore, we get that $\psi^*(Q) + \mathrm{b}_1 + \dots + \mathrm{b}_N $ is a special member of $ \vert  \mathpzc{H}  \vert $ and hence:
\beqn
  \mathpzc{H}  \ \sim \ \psi^*(Q) + \mathrm{b}_1 + \dots + \mathrm{b}_N   \,  .
\eeqn  
Therefore, we obtain
\beqn
 \mathpzc{D}_2 =  \mathpzc{H} - \left ( \mathrm{b}_1 + \dots + \mathrm{b}_N \right ) \  \sim \ \psi^*(Q) \, . 
\eeqn
It follows that $\vert  \mathpzc{D}_2 \vert $ is the linear system associated with the morphism $\phi$.   The first part of~(5) follows by standard lattice theoretic computations. The second part is analogous to~(4).
\end{proof}
\begin{figure}
  	\centering
	\scalebox{0.7}{%
    		\begin{tikzpicture}[rotate=90]
       		\draw [domain=-5:0] plot(\x,{0.67*(5-abs(\x))^0.5});
\draw [domain=-5:0] plot(\x,{-0.67*(5-abs(\x))^0.5});

\draw [very thick] (-6,0) -- (-5,0);

\draw (0,1.5) -- (3.5,0.5);
\draw (0,1.5) -- (2.5,0.5);
\draw (0,1.5) -- (1.5,0.5);
\draw (0,1.5) -- (0.5,0.5);
\draw (0,1.5) -- (-0.5,0.5);
\draw (0,1.5) -- (-1.5,0.5);
\draw (0,1.5) -- (-2.5,0.5);
\draw (0,1.5) -- (-3.5,0.5);

\draw [very thick] (3.5,0.5) -- (3.5,-0.5);
\draw [very thick] (2.5,0.5) -- (2.5,-0.5);
\draw [very thick] (1.5,0.5) -- (1.5,-0.5);
\draw [very thick] (0.5,0.5) -- (0.5,-0.5);
\draw [very thick] (-3.5,0.5) -- (-3.5,-0.5);
\draw [very thick] (-2.5,0.5) -- (-2.5,-0.5);
\draw [very thick] (-1.5,0.5) -- (-1.5,-0.5);
\draw [very thick] (-0.5,0.5) -- (-0.5,-0.5);

\draw (0,-1.5) -- (3.5,-0.5);
\draw (0,-1.5) -- (2.5,-0.5);
\draw (0,-1.5) -- (1.5,-0.5);
\draw (0,-1.5) -- (0.5,-0.5);
\draw (0,-1.5) -- (-0.5,-0.5);
\draw (0,-1.5) -- (-1.5,-0.5);
\draw (0,-1.5) -- (-2.5,-0.5);
\draw (0,-1.5) -- (-3.5,-0.5);

\draw (0,1.5) node {$\bullet$};
\draw (3.5,0.5) node  {$\bullet$};
\draw (2.5,0.5) node  {$\bullet$};
\draw (1.5,0.5) node  {$\bullet$};
\draw (0.5,0.5) node  {$\bullet$};
\draw (-0.5,0.5) node  {$\bullet$};
\draw (-1.5,0.5) node  {$\bullet$};
\draw (-2.5,0.5) node  {$\bullet$};
\draw (-3.5,0.5) node  {$\bullet$};

\draw (-5,0) node  {$\bullet$};
\draw (-6,0) node  {$\bullet$};

\draw (-0.3,1.8) node [above]{$A_{1}$};
\draw (3.5,0.5) node [left]{$A_{12}$};
\draw (2.5,0.5) node [left]{$A_{11}$};
\draw (1.5,0.5) node [left]{$A_{10}$};
\draw (0.5,0.5) node [left]{$A_{9}$};
\draw (-0.5,0.5) node [left]{$A_{8}$};
\draw (-1.5,0.5) node [left]{$A_{7}$};
\draw (-2.5,0.5) node [left]{$A_{6}$};
\draw (-3.5,0.5) node [left]{$A_{5}$};

\draw (0,-1.5) node {$\bullet$};
\draw (3.5,-0.5) node  {$\bullet$};
\draw (2.5,-0.5) node  {$\bullet$};
\draw (1.5,-0.5) node  {$\bullet$};
\draw (0.5,-0.5) node  {$\bullet$};
\draw (-0.5,-0.5) node  {$\bullet$};
\draw (-1.5,-0.5) node  {$\bullet$};
\draw (-2.5,-0.5) node  {$\bullet$};
\draw (-3.5,-0.5) node  {$\bullet$};

\draw (0.3,-1.8) node [below]{$A_{3}$};
\draw (3.5,-1.4) node [left]{$A_{20}$};
\draw (2.5,-1.4) node [left]{$A_{19}$};
\draw (1.5,-1.4) node [left]{$A_{18}$};
\draw (0.5,-1.4) node [left]{$A_{17}$};
\draw (-0.5,-1.4) node [left]{$A_{16}$};
\draw (-1.5,-1.4) node [left]{$A_{15}$};
\draw (-2.5,-1.4) node [left]{$A_{14}$};
\draw (-3.5,-1.4) node [left]{$A_{13}$};
\draw (-5.6,0.4) node [below]{$A_{4}$};
\draw (-4.3,0) node [below]{$A_{2}$};

    		\end{tikzpicture}}
\caption{Dual graph for $\mathrm{NS}(\mathcal{X}) = H\oplus D_4 \oplus A_1^{\oplus 5}$}
 \label{fig:pic11}
\end{figure}
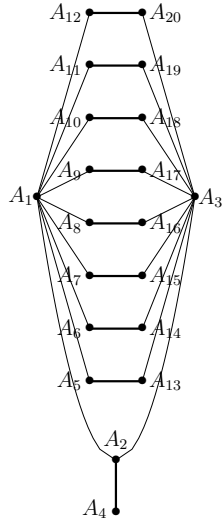
\begin{figure}
  	\centering
	\scalebox{0.7}{%
    		\begin{tikzpicture}[rotate=90]
       		\draw (0,1.5) -- (2.5,0.5);
\draw (0,1.5) -- (1.5,0.5);
\draw (0,1.5) -- (0.5,0.5);
\draw (0,1.5) -- (-0.5,0.5);
\draw (0,1.5) -- (-1.5,0.5);
\draw (0,1.5) -- (-2.5,0.5);

\draw [very thick] (2.5,0.5) -- (2.5,-0.5);
\draw [very thick] (1.5,0.5) -- (1.5,-0.5);
\draw [very thick] (0.5,0.5) -- (0.5,-0.5);
\draw [very thick] (-2.5,0.5) -- (-2.5,-0.5);
\draw [very thick] (-1.5,0.5) -- (-1.5,-0.5);
\draw [very thick] (-0.5,0.5) -- (-0.5,-0.5);

\draw (-3.5,0) -- (-3.5-0.866,0.5);
\draw (-3.5,0) -- (-3.5-0.866,-0.5);
\draw (0,1.5) -- (-3.5,1.5);
\draw (-3.5,1.5) -- (-3.5,-1.5);
\draw (0,-1.5) -- (-3.5,-1.5);
\draw (0,-1.5) -- (2.5,-0.5);
\draw (0,-1.5) -- (1.5,-0.5);
\draw (0,-1.5) -- (0.5,-0.5);
\draw (0,-1.5) -- (-0.5,-0.5);
\draw (0,-1.5) -- (-1.5,-0.5);
\draw (0,-1.5) -- (-2.5,-0.5);

\draw (-3.5-0.866,0.5) node {$\bullet$};
\draw (-3.5-0.866,-0.5) node {$\bullet$};
\draw (0,1.5) node {$\bullet$};
\draw (-3.5,1.5) node  {$\bullet$};
\draw (2.5,0.5) node  {$\bullet$};
\draw (1.5,0.5) node  {$\bullet$};
\draw (0.5,0.5) node  {$\bullet$};
\draw (-0.5,0.5) node  {$\bullet$};
\draw (-1.5,0.5) node  {$\bullet$};
\draw (-2.5,0.5) node  {$\bullet$};
\draw (-3.5,-1.5) node  {$\bullet$};

\draw (0,1.5) node [left]{$A_{ 12}$};
\draw (2.5,0.5) node [left]{$A_{ 13}$};
\draw (1.5,0.5) node [left]{$A_{ 5}$};
\draw (0.5,0.5) node [left]{$A_{ 4}$};
\draw (-0.5,0.5) node [left]{$A_{ 3}$};
\draw (-1.5,0.5) node [left]{$A_{ 2}$};
\draw (-2.4,0.5) node [left]{$A_{ 1}$};
\draw (-3.5,-1.5) node [right]{$A_{ 10}$};

\draw (0,-1.5) node {$\bullet$};
\draw (2.5,-0.5) node  {$\bullet$};
\draw (1.5,-0.5) node  {$\bullet$};
\draw (0.5,-0.5) node  {$\bullet$};
\draw (-0.5,-0.5) node  {$\bullet$};
\draw (-1.5,-0.5) node  {$\bullet$};
\draw (-2.5,-0.5) node  {$\bullet$};
\draw (-3.5,0) node  {$\bullet$};

\draw (0,-1.5) node [right]{$A_{ 11}$};
\draw (2.5,-1.4) node [left]{$A_{ 19}$};
\draw (1.5,-1.4) node [left]{$A_{ 18}$};
\draw (0.5,-1.4) node [left]{$A_{ 17}$};
\draw (-0.5,-1.4) node [left]{$A_{ 16}$};
\draw (-1.5,-1.4) node [left]{$A_{ 15}$};
\draw (-2.4,-1.4) node [left]{$A_{ 14}$};
\draw (-3.5,1.5) node [left]{$A_{ 9}$};

\draw (-3.5-0.866,0.5) node [left]{$A_{ 6}$};
\draw (-3.5-0.866,-0.5) node [right]{$A_{ 7}$};
\draw (-3.5,0) node [above]{$A_{ 8}$};

    		\end{tikzpicture}}
\caption{Dual graph for $\mathrm{NS}(\mathcal{X}) = H\oplus D_6 \oplus A_1^{\oplus 4}$}
 \label{fig:pic12}
\end{figure}
\begin{figure}
  	\centering
	\scalebox{0.7}{%
    		\begin{tikzpicture}[rotate=-90]
       		\draw (0,0) -- (-2,1);
\draw (0,0) -- (-1,1);
\draw (0,-1) -- (0,4);
\draw (0,0) -- (1,1);
\draw (0,0) -- (2,1);

\draw (0,4) -- (4,4);
\draw (3,4) -- (3,-1);
\draw (0,-1) -- (4,-1);

\draw (0,3) -- (-2,2);
\draw (0,3) -- (-1,2);
\draw (0,3) -- (1,2);
\draw (0,3) -- (2,2);

\draw [very thick] (-2,2) -- (-2,1);
\draw [very thick] (-1,2) -- (-1,1);
\draw [very thick] (0,1) -- (0,2);
\draw [very thick] (1,2) -- (1,1);
\draw [very thick] (2,2) -- (2,1);

\draw (0,0) node {$\bullet$};
\draw (-2,1) node {$\bullet$};
\draw (-1,1) node {$\bullet$};
\draw (0,1) node {$\bullet$};
\draw (1,1) node {$\bullet$};
\draw (2,1) node {$\bullet$};
\draw (-2,2) node {$\bullet$};
\draw (-1,2) node {$\bullet$};
\draw (0,2) node {$\bullet$};
\draw (1,2) node {$\bullet$};
\draw (2,2) node {$\bullet$};
\draw (0,3) node {$\bullet$};
\draw (0,4) node {$\bullet$};
\draw (3,4) node {$\bullet$};
\draw (4,4) node {$\bullet$};
\draw (3,1.5) node {$\bullet$};
\draw (3,-1) node {$\bullet$};
\draw (4,-1) node {$\bullet$};
\draw (0,-1) node {$\bullet$};

\draw (0,-0.2) node [above]{$A_{12}$};
\draw (-2,1) node [left]{$A_{13}$};
\draw (-1,1) node [left]{$A_{14}$};
\draw (0.1,1) node [left]{$A_{15}$};
\draw (1,1) node [left]{$A_{16}$};
\draw (2,1) node [left]{$A_{17}$};
\draw (-2,2) node [right]{$A_{1}$};
\draw (-1,2) node [right]{$A_{2}$};
\draw (0,2) node [right]{$A_{3}$};
\draw (1,2) node [right]{$A_{4}$};
\draw (2,2) node [right]{$A_{5}$};
\draw (0,3.2) node [above]{$A_{6}$};
\draw (0,4) node [right]{$A_{7}$};
\draw (3,4) node [right]{$A_{8}$};
\draw (4,4) node [right]{$A_{9}$};
\draw (3,1.5) node [below]{$A_{10}$};
\draw (3,-1) node [left]{$A_{11}$};
\draw (4,-1) node [left]{$A_{19}$};
\draw (0,-1) node [left]{$A_{18}$};
    		\end{tikzpicture}}
\caption{Dual graph for $\mathrm{NS}(\mathcal{X}) = H\oplus E_7 \oplus A_1^{\oplus 4}$}
 \label{fig:pic13}
\end{figure}
\begin{figure}
  	\centering
	\scalebox{0.7}{%
    		\begin{tikzpicture}
       		\draw (7,0) -- (12,0);
\draw (7,0) -- (7,-3);
\draw (7,-3) -- (12,-3);
\draw (12,0) -- (12,-3);
\draw [very thick] (9,0) -- (10,0);
\draw  (9,1) -- (8,0);
\draw  (9,-1) --  (8,0); 
\draw  (9,2) -- (8,0);
\draw  (9,-2) --  (8,0); 

\draw [very thick]  (9,2) -- (10,2);
\draw [very thick]  (9,1) -- (10,1);
\draw [very thick] (9,-1) --  (10,-1); 
\draw [very thick]  (9,-2) -- (10,-2);
\draw  (10,1) -- (11,0);
\draw  (10,-1) --  (11,0); 
\draw  (10,2) -- (11,0);
\draw  (10,-2) --  (11,0);

\draw (9.5,-3) -- (9.5,-4);

\draw (12,0) node {$\bullet$};
\draw (12,-1.5) node {$\bullet$};
\draw (12,-3) node {$\bullet$};
\draw (9.5,-3) node {$\bullet$};
\draw (7,-3) node {$\bullet$};
\draw (7,-1.5) node {$\bullet$};
\draw (7,0) node {$\bullet$};
\draw (8,0) node {$\bullet$};
\draw (9,0) node {$\bullet$};
\draw (10,0) node {$\bullet$};
\draw (11,0) node {$\bullet$};
\draw (9.5,-4) node {$\bullet$};
\draw (9,1) node {$\bullet$};
\draw (10,1) node {$\bullet$};
\draw (9,-1) node {$\bullet$};
\draw (10,-1) node {$\bullet$};
\draw (9,2) node {$\bullet$};
\draw (10,2) node {$\bullet$};
\draw (9,-2) node {$\bullet$};
\draw (10,-2) node {$\bullet$};

\draw (9.5,-4) node [right]{$A_{5}$};
\draw (12,0) node [right]{$A_{2}$};
\draw (12,-1.5) node [right]{$A_{3}$};
\draw (12,-3) node [right]{$A_{4}$};
\draw (9.5,-3) node [above]{$A_{6}$};
\draw (7,-3) node [left]{$A_{7}$};
\draw (7,-1.5) node [left]{$A_{8}$};
\draw (7,0) node [above]{$A_{9}$};
\draw (7.8,0) node [above]{$A_{10}$};
\draw (9,0) node [above]{$A_{13}$};
\draw (10,0) node [above]{$A_{18}$};
\draw (11,0) node [above right]{$A_{1}$};
\draw (9,1) node [above]{$A_{12}$};
\draw (10,1) node [above]{$A_{17}$};
\draw (9,-1) node [below]{$A_{14}$};
\draw (10,-1) node [below]{$A_{19}$};
\draw (9,2) node [above]{$A_{11}$};
\draw (10,2) node [above]{$A_{16}$};
\draw (9,-2) node [below]{$A_{15}$};
\draw (10,-2) node [below]{$A_{20}$};
    		\end{tikzpicture}}
\caption{Dual graph for $\mathrm{NS}(\mathcal{X}) = H\oplus D_8 \oplus D_4^{\oplus 4}$}
\label{fig:pic14p}
\end{figure}
\begin{figure}
  	\centering
	\scalebox{0.7}{%
    		\begin{tikzpicture}
       		\draw (7,0) -- (8,0);
\draw (8,0) -- (9,1.5);
\draw (8,0) -- (9,0.5);
\draw (8,0) -- (9,-0.5);
\draw (8,0) -- (9,-1.5);

\draw (11,0) -- (10,1.5);
\draw (11,0) -- (10,0.5);
\draw (11,0) -- (10,-0.5);
\draw (11,0) -- (10,-1.5);
\draw (11,0) -- (12,0);
\draw (12,3) -- (12,0);
\draw (7,3) -- (7,0);
\draw (7,3) -- (12,3);
\draw (12,1.5) -- (13,1.5);
\draw (6,1.5) -- (7,1.5);

\draw [very thick] (6,1.5) -- (5,3);
\draw [very thick] (6,1.5) -- (5,2);
\draw [very thick] (6,1.5) -- (5,1);
\draw [very thick] (6,1.5) -- (5,0);

\draw (5,3) node [left]{$A_{21}$};
\draw (5,2) node [left]{$A_{23}$};
\draw (5,1) node [left]{$A_{25}$};
\draw (5,0) node [left]{$A_{27}$};

\draw [very thick] (13,1.5) -- (14,3);
\draw [very thick] (13,1.5) -- (14,2);
\draw [very thick] (13,1.5) -- (14,1);
\draw [very thick] (13,1.5) -- (14,0);

\draw (14,3) node [right]{$A_{20}$};
\draw (14,2) node [right]{$A_{22}$};
\draw (14,1) node [right]{$A_{24}$};
\draw (14,0) node [right]{$A_{26}$};

\draw [very thick] (9,1.5) -- (10,1.5);
\draw [very thick] (9,0.5) -- (10,0.5);
\draw [very thick] (9,-0.5) -- (10,-0.5);
\draw [very thick] (9,-1.5) -- (10,-1.5);

\draw (5,3)  node {$\bullet$};
\draw (5,2)  node {$\bullet$};
\draw (5,1)  node {$\bullet$};
\draw (5,0)  node {$\bullet$};

\draw (14,3)  node {$\bullet$};
\draw (14,2)  node {$\bullet$};
\draw (14,1)  node {$\bullet$};
\draw (14,0)  node {$\bullet$};

\draw (6,1.5)  node {$\bullet$};
\draw (13,1.5)  node {$\bullet$};
\draw (11,0)  node {$\bullet$};
\draw (7,0) node {$\bullet$};
\draw (8,0) node {$\bullet$};
\draw (9,1.5) node {$\bullet$};
\draw (9,0.5) node {$\bullet$};
\draw (9,-1.5) node {$\bullet$};
\draw (9,-0.5) node {$\bullet$};
\draw (7,1.5) node {$\bullet$};
\draw (7,3) node {$\bullet$};
\draw (9.5,3) node {$\bullet$};
\draw (12,3) node {$\bullet$};
\draw (12,1.5) node {$\bullet$};
\draw (12,0) node {$\bullet$};

\draw (10,1.5) node {$\bullet$};
\draw (10,0.5) node {$\bullet$};
\draw (10,-1.5) node {$\bullet$};
\draw (10,-0.5) node {$\bullet$};

\draw (11,0) node [above]{$A_{1}$};

\draw (12,0) node [right]{$A_{2}$};
\draw (12,1.5) node [left]{$A_{3}$};
\draw (12,3) node [right]{$A_{5}$};
\draw (9.5,3) node [below]{$A_{6}$};
\draw (7,3) node [left]{$A_{7}$};
\draw (7,1.5) node [right]{$A_{8}$};
\draw (7,0) node [left]{$A_{10}$};
\draw (7.85,0) node [above]{$A_{11}$};
\draw (9.1,1.5) node [below]{$A_{12}$};
\draw (9,0.5) node [below]{$A_{13}$};
\draw (9,-1.5) node [below]{$A_{15}$};
\draw (9,-0.5) node [below]{$A_{14}$};

\draw (9.9,1.5) node [below]{$A_{16}$};
\draw (10,0.5) node [below]{$A_{17}$};
\draw (10,-1.5) node [below]{$A_{19}$};
\draw (10,-0.5) node [below]{$A_{18}$};
    		\end{tikzpicture}}
\caption{Dual graph for $\mathrm{NS}(\mathcal{X}) = H\oplus E_8 \oplus A_1^{\oplus 4}$}
 \label{fig:pic14}
\end{figure}
\begin{figure}
  	\centering
	\scalebox{0.7}{%
    		\begin{tikzpicture}
       		\draw (0,0) -- (6,0);
\draw (1,1.5) -- (5,1.5);
\draw (0,0) -- (5,1.5);
\draw (1,1.5) -- (6,0);
\draw [very thick] (2,1.5) -- (4,1.5);
\draw [very thick] (2,2.5) -- (4,2.5);
\draw [very thick] (2,3.5) -- (4,3.5);
\draw [ultra thick] (0,4.5) -- (6,4.5);
\draw [very thick] (0,4.5) -- (0,2.5);
\draw [very thick] (6,4.5) -- (6,2.5);
\draw (0,2.5) -- (0,0);
\draw (6,2.5) -- (6,0);
\draw (0,4.5) -- (2,1.5);
\draw (0,4.5) -- (2,2.5);
\draw (0,4.5) -- (2,3.5);
\draw (6,4.5) -- (4,1.5);
\draw (6,4.5) -- (4,2.5);
\draw (6,4.5) -- (4,3.5);
\draw (1,1.5) -- (2,1.5);
\draw (1,1.5) -- (2,2.5);
\draw (1,1.5) -- (2,3.5);
\draw (5,1.5) -- (4,1.5);
\draw (5,1.5) -- (4,2.5);
\draw (5,1.5) -- (4,3.5);

\draw (0,0) node {$\bullet$};
\draw (1,0) node {$\bullet$};
\draw (2,0) node {$\bullet$};
\draw (3,0) node {$\bullet$};
\draw (4,0) node {$\bullet$};
\draw (5,0) node {$\bullet$};
\draw (6,0) node {$\bullet$};
\draw (1.5,0.45) node {$\bullet$};
\draw (4.5,0.45) node {$\bullet$};
\draw (0,2.5)  node {$\bullet$};
\draw (6,2.5)  node {$\bullet$};
\draw (0,4.5)  node {$\bullet$};
\draw (2,1.5) node {$\bullet$};
\draw (2,2.5) node {$\bullet$};
\draw (2,3.5) node {$\bullet$};
\draw (6,4.5)  node {$\bullet$};
\draw (4,1.5) node {$\bullet$};
\draw (4,2.5) node {$\bullet$};
\draw (4,3.5) node {$\bullet$};
\draw (1,1.5)  node {$\bullet$};
\draw (5,1.5)  node {$\bullet$};

\draw (0,0) node [below]{$A_{10}$};
\draw (1,0) node [below]{$A_{9}$};
\draw (2,0) node [below]{$A_{8}$};
\draw (3,0) node [below]{$A_{7}$};
\draw (4,0) node [below]{$A_{6}$};
\draw (5,0) node [below]{$A_{5}$};
\draw (6,0) node [below]{$A_{3}$};
\draw (0,2.5) node [left]{$A_{11}$}; 
\draw (6,2.5)  node [right]{$A_{4}$};
\draw (0,4.5)  node [left]{$A_{20}$};
\draw (2,1.5) node [above]{$A_{17}$};
\draw (2,2.5) node [above]{$A_{18}$};
\draw (2,3.5) node [above]{$A_{19}$};
\draw (6,4.5)  node [right]{$A_{21}$};
\draw (4-0.15,1.5) node [above]{$A_{14}$};
\draw (4-0.15,2.5) node [above]{$A_{15}$};
\draw (4-0.15,3.5) node [above]{$A_{16}$};
\draw (1,1.5)  node [left]{$A_{1}$};
\draw (5,1.5)  node [right]{$A_{13}$};
\draw (1.5,0.45) node [above]{$A_{12}$};
\draw (4.5,0.45) node [above]{$A_{2}$};

\draw (3,4.5) node [above]{\small $6$};
    		\end{tikzpicture}}
\caption{Dual graph for $\mathrm{NS}(\mathcal{X}) = H \oplus E_8  \oplus D_4 \oplus A_1)$}
 \label{fig:pic15}
\end{figure}
\begin{figure}
  	\centering
	\scalebox{0.7}{%
    		\begin{tikzpicture}
       		\draw [very thick] (0,1) -- (12,1);
\draw [very thick] (0,-1) -- (12,-1);
\draw (0,0) -- (12,0);
\draw (0,1) -- (0,-1);
\draw (12,1) -- (12,-1);

\draw (2,-0.8) -- (2,0); 
\draw (10,-0.8) -- (10,0);

\draw (0,1) node {$\bullet$};
\draw (0,-1) node {$\bullet$};
\draw (12,1) node {$\bullet$};
\draw (12,-1) node {$\bullet$};

\draw (0,0) node {$\bullet$};
\draw (1,0) node {$\bullet$};
\draw (2,0) node {$\bullet$};
\draw (3,0) node {$\bullet$};
\draw (4,0) node {$\bullet$};
\draw (5,0) node {$\bullet$};
\draw (6,0) node {$\bullet$};
\draw (7,0) node {$\bullet$};
\draw (8,0) node {$\bullet$};
\draw (9,0) node {$\bullet$};
\draw (10,0) node {$\bullet$};
\draw (11,0) node {$\bullet$};
\draw (12,0) node {$\bullet$};
\draw (2,-0.8) node {$\bullet$};
\draw (10,-0.8) node {$\bullet$};

\draw (0,0) node [left]{$A_{1}$};
\draw (1,0) node [above]{$A_{2}$};
\draw (2,0) node [above]{$A_{3}$};
\draw (3,0) node [above]{$A_{5}$};
\draw (4,0) node [above]{$A_{6}$};
\draw (5,0) node [above]{$A_{7}$};
\draw (6,0) node [above]{$A_{8}$};
\draw (7,0) node [above]{$A_{9}$};
\draw (8,0) node [above]{$A_{10}$};
\draw (9,0) node [above]{$A_{11}$};
\draw (10,0) node [above]{$A_{12}$};
\draw (11,0) node [above]{$A_{14}$};
\draw (12,0) node [right]{$A_{15}$};

\draw (0,1) node [left]{$A_{19}$};
\draw (0,-1) node [left]{$A_{17}$};
\draw (12,1) node [right]{$A_{18}$};
\draw (12,-1) node [right]{$A_{16}$};
\draw (2,-0.7) node [right]{$A_{4}$};
\draw (10,-0.7) node [right]{$A_{13}$};
    		\end{tikzpicture}}
\caption{Dual graph for $\mathrm{NS}(\mathcal{X}) = H \oplus E_8  \oplus D_6$}
\label{fig:pic16}
\end{figure}
\begin{figure}
  	\centering
	\scalebox{0.7}{%
    		\begin{tikzpicture}
       		\draw [very thick] (3,-2) -- (7,-2);
\draw (0,-2) -- (10,-2);
\draw (0,0) -- (10,0);
\draw (0,1) -- (0,-2);
\draw (10,1) -- (10,-2);

\draw (0,0) node {$\bullet$};
\draw (1,0) node {$\bullet$};
\draw (2,0) node {$\bullet$};
\draw (3,0) node {$\bullet$};
\draw (4,0) node {$\bullet$};
\draw (5,0) node {$\bullet$};
\draw (6,0) node {$\bullet$};
\draw (7,0) node {$\bullet$};
\draw (8,0) node {$\bullet$};
\draw (9,0) node {$\bullet$};
\draw (10,0) node {$\bullet$};

\draw (0,1) node {$\bullet$};
\draw (0,-1) node {$\bullet$};
\draw (0,-2) node {$\bullet$};
\draw (3,-2) node {$\bullet$};

\draw (10,1) node {$\bullet$};
\draw (10,-1) node {$\bullet$};
\draw (10,-2) node {$\bullet$};
\draw (7,-2) node {$\bullet$};

\draw (0,0) node [left]{$A_{2}$};
\draw (1,0) node [above]{$A_{6}$};
\draw (2,0) node [above]{$A_{7}$};
\draw (3,0) node [above]{$A_{8}$};
\draw (4,0) node [above]{$A_{9}$};
\draw (5,0) node [above]{$A_{10}$};
\draw (6,0) node [above]{$A_{11}$};
\draw (7,0) node [above]{$A_{12}$};
\draw (8,0) node [above]{$A_{13}$};
\draw (9,0) node [above]{$A_{14}$};

\draw (0,1) node [left]{$A_{1}$};
\draw (0,-1) node [left]{$A_{3}$};
\draw (0,-2) node [left]{$A_{4}$};
\draw (3,-2) node [above]{$A_{5}$};

\draw (10,0) node [right]{$A_{15}$};
\draw (10,1) node [right]{$A_{18}$};
\draw (10,-1) node [right]{$A_{19}$}; 
\draw (10,-2) node [right]{$A_{16}$}; 
\draw (7,-2) node [above]{$A_{17}$};
    		\end{tikzpicture}}
\caption{Dual graph for $\mathrm{NS}(\mathcal{X}) = H \oplus E_8 \oplus E_7$}
\label{fig:pic17}
\end{figure}
\begin{figure}
  	\centering
	\scalebox{0.8}{%
    		\begin{tikzpicture}
\draw (0,0) -- (16*0.8,0);
\draw (1.6,0) -- (1.6,-0.8);
\draw (0.8*14,0) -- (0.8*14,-0.8);

\draw (0,0) node {$\bullet$};
\draw (0.8,0) node {$\bullet$};
\draw (2*0.8,0) node {$\bullet$};
\draw (3*0.8,0) node {$\bullet$};
\draw (4*0.8,0) node {$\bullet$};
\draw (5*0.8,0) node {$\bullet$};
\draw (6*0.8,0) node {$\bullet$};
\draw (7*0.8,0) node {$\bullet$};
\draw (8*0.8,0) node {$\bullet$};
\draw (9*0.8,0) node {$\bullet$};
\draw (10*0.8,0) node {$\bullet$};
\draw (11*0.8,0) node {$\bullet$};
\draw (12*0.8,0) node {$\bullet$};
\draw (13*0.8,0) node {$\bullet$};
\draw (14*0.8,0) node {$\bullet$};
\draw (15*0.8,0) node {$\bullet$};
\draw (16*0.8,0) node {$\bullet$};
\draw (0.8*2,-0.8) node {$\bullet$};
\draw (0.8*14,-0.8) node {$\bullet$};

\draw (0,0) node [above]{$A_{1}$};
\draw (0.8,0) node [above]{$A_{2}$};
\draw (2*0.8,0) node [above]{$A_{3}$};
\draw (3*0.8,0) node [above]{$A_{5}$};
\draw (4*0.8,0) node [above]{$A_{6}$};
\draw (5*0.8,0) node [above]{$A_{7}$};
\draw (6*0.8,0) node [above]{$A_{8}$};
\draw (7*0.8,0) node [above]{$A_{19}$};
\draw (8*0.8,0) node [above]{$A_{9}$};
\draw (9*0.8,0) node [above]{$A_{10}$};
\draw (10*0.8,0) node [above]{$A_{11}$};
\draw (11*0.8,0) node [above]{$A_{12}$};
\draw (12*0.8,0) node [above]{$A_{13}$};
\draw (13*0.8,0) node [above]{$A_{14}$};
\draw (14*0.8,0) node [above]{$A_{15}$};
\draw (15*0.8,0) node [above]{$A_{17}$};
\draw (16*0.8,0) node [above]{$A_{18}$};
\draw (0.8*2,-0.8) node [left]{$A_{4}$};
\draw (0.8*14,-0.8) node [right]{$A_{16}$};
    		\end{tikzpicture}}
\caption{Dual graph for $\mathrm{NS}(\mathcal{X}) = H \oplus E_8  \oplus E_8$}
\label{fig:pic18}
\end{figure}
\section{Conclusions}
We have proved that every K3 surface with automorphism group $(\mathbb{Z}/2\mathbb{Z})^2$ admits a birational model as a double sextic surface. This was the central part of Theorem~\ref{thm1_intro}, stated in the introduction. In Table~\ref{tab:proof_thm1} we indicate where each part of Theorem~\ref{thm1_intro} has been proved in this article, depending on the rank of the corresponding lattice. Furthermore, the constructed birational model $\mathcal{S}$ is canonical for $\rho_L >10$. This was stated as Proposition~\ref{prop:canonical_intro} in the introduction and proved as part of Proposition~\ref{prop:double_surface_S_rank11ff}. For Picard number greater than 9, the considered K3 surfaces also possess a second birational model as quartic projective hypersurface $\mathcal{K}$, generalizing the Inose quartic.  The polarizing divisor of this quartic was computed explicitly. This was the central part of Theorem~\ref{thm2_intro},  stated in the introduction.  In Table~\ref{tab:proof_thm2} we indicate where every statement contained in Theorem~\ref{thm2_intro} has been proved in this article, depending on the rank of the corresponding lattice.  The birational model as quartic hypersurface $\mathcal{K}$ is closely related to that of a second double sextic, denoted by $\mathcal{S}'$, whose branch locus is the union of a conic and a plane quartic curve. Explicit equations for the constructed surfaces were also provided for all lattices, with equations as indicated in Table~\ref{tab:surfaces}.
\begin{table}
\begin{tabular}{c|c||l|l|l}
$\rho_L$ 	& $(\ell_L, \delta_L)$	& Thm.~\ref{thm1_intro}~(1) & Thm.~\ref{thm1_intro}~(2) & Thm.~\ref{thm1_intro}~(3) \\
\hline 
\hline
$9$ 		& $(9,1)$ 	& Prop.~\ref{prop1} 		& Prop.~\ref{prop0} 					& Prop.~\ref{prop2} \\
\hline
$10$ 	& $(6,0)$ 	& Prop.~\ref{prop:rank10} & Prop.~\ref{prop:rank10}				& Prop.~\ref{prop:symmetry_rank10a}\\
\hdashline
$10$ 	& $(8,1)$ 	& Cor.~\ref{cor:rank10b}	& Lemma~\ref{lem:sings_on_S_rank10} 	& Prop.~\ref{prop:symmetry_rank10b}\\
\hline
$18-k$ & $(k, \delta_k)$ 	& Prop.~\ref{prop:double_surface_S_rank11ff} & Prop.~\ref{prop:double_surface_S_rank11ff}~(1), (2) & Thm.~\ref{thm_rank_ge11}~(4)\\[-0.1em]
\multicolumn{5}{l}{{\small $\quad k =0, \dots, 7$, with $\delta_k =1$ for $k= 1, \dots, 7$ and $\delta_k =0$ for $k= 0, 4$}}  \\
\hline
\end{tabular}
\captionsetup{justification=centering}
\caption{Proof of Theorem~\ref{thm1_intro}}
\label{tab:proof_thm1}
\end{table}
\begin{table}
\begin{tabular}{c|c||l|l|l}
$\rho_L$ 	& $(\ell_L, \delta_L)$ & Thm.~\ref{thm2_intro}~(1) & Thm.~\ref{thm2_intro}~(2) & Thm.~\ref{thm2_intro}~(3) \\
\hline 
\hline
$9$ 		& $(7,1)$	& Prop.~\ref{prop:K_rank9} 									& Lemma~\ref{lem:sing_of_K_rank9} 	& n/a\\
\hline
$10$ 	& $(8,1)$ 	& Prop.~\ref{prop:double_surface_Sp_10}, Lemma~\ref{lem:birational}	& Lemma~\ref{lem:sings_K_rank10} 		& Prop.~\ref{prop_divisor_rank10}\\
\hline
$18-k$ & $(k, \delta_k)$ 	& \multicolumn{2}{|c|}{Cor.~\ref{prop:regular}} 						& Thm.~\ref{thm_rank_ge11}~(3)\\[-0.1em]
\multicolumn{5}{l}{{\small $\quad k =0, \dots, 7$, with $\delta_k =1$ for $k= 1, \dots, 7$ and $\delta_k =0$ for $k= 0, 4$}}  \\
\hline
\end{tabular}
\captionsetup{justification=centering}
\caption{Proof of Theorem~\ref{thm2_intro}}
\label{tab:proof_thm2}
\end{table}
\begin{table}
\begin{tabular}{c|c||l|l|l}
$\rho_L$ 	& $(\ell_L, \delta_L)$ & $\mathcal{S}$ & $\mathcal{S}'$ & $\mathcal{K}$ \\
\hline 
\hline
$9$ 	& $(9,1)$  & Eq.~(\ref{eqn:canonical_sextic_rank9star}) 	& n/a														& n/a \\
\hdashline
$9$ 	& $(7,1)$	& n/a									& Eq.~(\ref{eqn:double-sextics_prime_general_intro}) (general $C, Q$)	& Eq.~(\ref{eqn:quartic_general_intro})  (general $C, Q$) \\
\hline
$10$ & $(6,0)$ 	& Eq.~(\ref{eqn:double_sextic_rank10}) 		& n/a														& n/a \\
\hdashline
$10$ & $(8,1)$ 	& Eq.~(\ref{eqn:canonical_sextic_rank10star})	& Eq.~(\ref{eqn:canonical_sextic_prime_rank10star})				& Eq.~(\ref{eqn:canonical_quartic_rank10star})  \\
\hline
$18-k$ & $(k, \delta_k)$ & Eq.~(\ref{eqn:double-sextics_general_rang_ge11})	& Eq.~(\ref{eqn:double-sextics_prime_general_rank_ge11})	& Eq.~(\ref{eqn:quartic_general_rank_ge11})\\[-0.1em]
\multicolumn{5}{l}{{\small $\quad k =0, \dots, 7$, with $\delta_k =1$ for $k= 1, \dots, 7$ and $\delta_k =0$ for $k= 0, 4$}}  \\
\hline
\end{tabular}
\captionsetup{justification=centering}
\caption{Normal forms for the surfaces $\mathcal{S}$,  $\mathcal{S}'$,  $\mathcal{K}$}
\label{tab:surfaces}
\end{table}
\bibliographystyle{amsplain}
\bibliography{references1, references2}{}
\end{document}